
\documentclass[10pt]{amsart}

\usepackage{amssymb,amsmath,amsthm,amscd}

\advance\oddsidemargin by -1cm
\advance\evensidemargin by -1cm 
\textwidth=150mm


\newtheorem{theorem}{Theorem}

\newtheorem{proposition}{Proposition}
\newtheorem{corollary}{Corollary}
\newtheorem{lemma}{Lemma}
\newtheorem{definition}{Definition}

\theoremstyle{definition}
\newtheorem{remark}{Remark}

\newcommand{\bdm}{\begin{displaymath}}
\newcommand{\edm}{\end{displaymath}}
\newcommand{\bq}{\begin{equation}}
\newcommand{\eq}{\end{equation}}
\newcommand{\bqn}{\begin{equation*}}
\newcommand{\eqn}{\end{equation*}}


\newcommand{\rn}{\mathbb{R}^n}


\newcommand{\norm}[1]{\left\| #1 \right\|}
\newcommand{\mklm}[1]{\left\{ #1 \right\}}
\newcommand{\eklm}[1]{\left\langle  #1 \right\rangle}

\renewcommand{\d}{\,d}
\newcommand{\N}{{\mathbb N}}

\newcommand{\C}{{\mathbb C}}
\newcommand{\R}{{\mathbb R}}

\newcommand{\B}{{\mathcal B}}
\newcommand{\D}{{\mathcal D}}
\newcommand{\E}{{\mathcal E}}

\newcommand{\X}{{\mathbb X}}
\newcommand{\0}{{\rm 0}}
\renewcommand{\epsilon}{\varepsilon}
\renewcommand{\phi}{\varphi}
\renewcommand{\rho}{\varrho}
\newcommand{\1}{{ \bf  1}}

\newcommand{\Cinft}{{\rm C^{\infty}}}
\newcommand{\CT}{{\rm C^{\infty}_c}}
\newcommand{\Cinftv}{{\rm \dot C^{\infty}}}
\newcommand{\CTv}{{\rm \dot C^{\infty}_c}}

\renewcommand{\L}{{\rm L}}

\renewcommand{\S}{{\mathcal S}}
\newcommand{\G}{{\mathcal G}}
\newcommand{\Sym}{{\rm S}}
\newcommand{\Syms}{{\rm S^{-\infty}}}
\newcommand{\Symsl}{{\rm S^{-\infty}_{la}}}

\newcommand{\GL}{\mathrm{GL}}

\newcommand{\g}{{\bf \mathfrak g}}
\renewcommand{\k}{{\bf \mathfrak k}}
\renewcommand{\a}{{\bf\mathfrak a}}
\newcommand{\m}{{\bf\mathfrak m}}
\newcommand{\n}{{\bf\mathfrak n}}
\newcommand{\p}{{\bf \mathfrak p}}

\newcommand{\U}{{\mathfrak U}}

\newcommand{\Ad}{\mathrm{Ad}\,}
\newcommand{\ad}{\mathrm{ad}\,}
\newcommand{\sgn }{\mathrm{sgn }\,}

\renewcommand{\det}{\mathrm{det}\,}

\renewcommand{\Re}{\mathrm{Re}\,}

\DeclareMathOperator{\supp}{supp}

\DeclareMathOperator{\tr}{tr}
\DeclareMathOperator{\gd}{\partial}

\newcommand{\e}[1]{\,{\mathrm e}^{#1}\,}
\newcommand{\dbar}{{\,\raisebox{-.1ex}{\={}}\!\!\!\!d}}



\begin{document}

\author{Aprameyan Parthasarathy and Pablo Ramacher}
\title[Integral operators on Oshima compactifications of Riemannian symmetric spaces]{Integral operators on the
 Oshima compactification of a  Riemannian symmetric space of non-compact type. Microlocal analysis and  kernel asymptotics }
\address{Aprameyan Parthasarathy and Pablo Ramacher, Fachbereich Mathematik und Informatik, Philipps-Universit\"at Marburg,  
Hans-Meerwein-Str., 35032 Marburg, Germany}
\subjclass{22E46, 53C35, 32J05, 58J40, 58J37, 58J35, 47A10}
\keywords{Riemannian symmetric spaces of non-compact type, Oshima compactification, totally characteristic pseudodifferential operators, elliptic operators on 
Lie groups, semigroup and resolvent kernels}
\email{apra@mathematik.uni-marburg.de, ramacher@mathematik.uni-marburg.de}
\thanks{The authors wish to thank Toshio Oshima for valuable conversations on the subject. We are also grateful to Jean-Philippe Anker for 
helpful remarks on the classical heat kernel on Riemannian symmetric spaces. This work was financed by the DFG-grant RA 1370/2-1.}

\begin{abstract}
Let $\X\simeq G/K$ be a Riemannian symmetric space of non-compact type, $\widetilde \X$ its Oshima compactification, and
 $(\pi,\mathrm{C}(\widetilde \X))$ the regular representation of $G$ on  $\widetilde \X$. We study integral operators on $\widetilde \X$ 
of the form $\pi(f)$, where $f$ is a rapidly falling function on $G$, and characterize them within the framework of pseudodifferential 
operators, describing the singular nature of their kernels. In particular, we consider the holomorphic semigroup generated
 by a strongly  elliptic operator associated to the representation $\pi$, as well as  its resolvent, and describe the 
asymptotic behavior of the corresponding semigroup and resolvent kernels.  
\end{abstract}

\maketitle

\tableofcontents

\section{Introduction}

Let $\X$ be a Riemannian symmetric space of non-compact type. Then $\X$ is isomorphic to $G/K$, where $G$ is a connected real 
semisimple Lie group, and $K$ a maximal compact subgroup. Consider further the Oshima compactification \cite{oshima78} $\widetilde \X$ of $\X$,
 a simply connected closed real-analytic manifold on which $G$ acts analytically. The orbital decomposition 
of $\widetilde \X$ is of normal crossing type, and the open orbits are isomorphic to $G/K$, the number of them being equal 
to $2^l$, where $l$ denotes the rank of $G/K$. In this paper, we shall study the invariant integral operators
\bq
\label{eq:1}
 \pi(f)= \int _G f(g) \pi(g) d_G (g),
\eq
where $\pi$ is the regular representation of $G$ on the Banach space $\mathrm{C}(\widetilde \X)$ of continuous 
functions on $\widetilde \X$,  $f$  a smooth, rapidly decreasing function on $G$, and $d_G$ a Haar measure on $G$. 
These operators  play an important role in representation theory, and our interest will be directed towards the 
elucidation of  their microlocal structure within the theory of pseudodifferential operators. Since  the underlying
 group action on $\widetilde \X$ is not transitive, the operators $\pi(f)$ are not smooth,  and the orbit structure
 of $\widetilde \X$ is  reflected in the singular behavior of their Schwartz kernels.
As it turns out, the operators in question can be characterized as pseudodifferential operators belonging to a particular class 
which was first introduced in \cite{melrose} in connection with  boundary problems. In fact, if $\widetilde \X_\Delta$ denotes a component 
in $\widetilde \X$ isomorphic to $G/K$, we prove  that  the  restrictions 
 \begin{equation*}
\pi(f)_{|\overline{\widetilde \X_{\Delta}}}:\CT(\overline{\widetilde \X_{\Delta}}) \longrightarrow
 \Cinft(\overline{\widetilde \X_{\Delta}})
\end{equation*}
of the operators $\pi(f)$ to the manifold
with corners $\overline{\widetilde \X_{\Delta}}$
 are  totally characteristic pseudodifferential operators of class $\L^{-\infty}_b$. A similar 
description of invariant integral operators on prehomogeneous vector spaces was obtained by the second author in \cite{ramacher06}. 
 We then consider the holomorphic semigroup generated by  a strongly elliptic operator $\Omega$ associated to the  regular
 representation $(\pi, \mathrm{C}(\widetilde \X))$ of $G$,   as well as  its resolvent. Since both the holomorphic semigroup and the 
resolvent can be characterized as  operators of the form \eqref{eq:1}, they can be studied  with the previous methods, and relying on the 
theory of elliptic operators on Lie groups \cite{robinson} we  obtain a description of the asymptotic behavior of the semigroup and resolvent kernels
 on $\widetilde \X_\Delta\simeq \X$ at infinity. In the particular case of the Laplace-Beltrami operator on $\X$, 
these questions have been intensively studied before. 
While for the classical heat kernel on $\X$ precise upper and lower bounds were  previously obtained in \cite{anker-ji99} 
using spherical analysis,  a detailed description of the analytic properties of the resolvent of the Laplace-Beltrami 
operator on $\X$ was given in \cite{mazzeo-melrose87}, \cite{mazzeo-vasy05}.

The paper is organized as follows. In Section \ref{Sec:2} we briefly recall those parts of the   structure theory of real semisimple 
Lie groups that are relevant to our purposes. We then describe the $G$-action on the homogeneous spaces $G/P_{\Theta}(K)$, 
where $P_{\Theta}(K)$ is a closed subgroup of $G$ associated naturally to a subset $\Theta$ of the set of simple roots, and 
the corresponding fundamental vector fields. This leads to the definition of the Oshima compactification $\widetilde{\X}$ of the
 symmetric space $\X\simeq G/K$, together with a description of the orbital decomposition of  $\widetilde{\X}$. Since this  decomposition 
is of normal crossing type, it is well-suited for our analytic purposes.  A thorough and unified description of the various 
compactifications of a symmetric space is given in \cite{borel-ji}.  Section \ref{sec:PDO} contains a summary with  some of the basic
facts in the theory pseudodifferential operators needed in the sequel. In particular, the class of totally characteristic 
pseudodifferential operators on a manifold with corners is introduced. Section \ref{Sec:4} is the central part 
of this paper.  By analyzing the orbit structure of the $G$-action on $\widetilde{\X}$, we
are able to elucidate the microlocal structure of the convolution operators $\pi(f)$, and characterize them as 
totally characteristic pseudodifferential operators on the manifold with corners  $\overline{\widetilde{\X}_\Delta}$. This
 leads to a description of the asymptotic behavior of their  Schwartz kernels on $\widetilde \X_\Delta\simeq \X$ at infinity.
In Section \ref{Sec:5}, we consider the holomorphic semigroup $S_\tau$ generated  by the closure $\overline \Omega$ of a strongly
 elliptic differential operator $\Omega$ associated to the representation $\pi$. Since $S_\tau=\pi(K_\tau)$, where $K_\tau(g)$ 
is a smooth and rapidly decreasing function on $G$,  we can apply our previous results to describe 
the Schwartz kernel of $S_\tau$.  The Schwartz kernel of the resolvent $(\lambda \1+ \overline{\Omega})^{-\alpha}$,
 where $\alpha > 0$, and $\Re \lambda$ is sufficiently large, can be treated similarly, but is more subtle 
due to the singularity of the corresponding group kernel $R_{\alpha,\lambda}(g)$ at the identity.
 
\newpage

\section{The Oshima compactification of a Riemannian symmetric space}
\label{Sec:2}

Let $G$ be a connected real semisimple Lie group with finite  centre and Lie algebra $\g$, and denote by $\langle X,Y\rangle = \tr \, (\ad X\circ \ad Y)$ the \emph{Cartan-Killing form} on $\g$. Let $\theta$ be the Cartan involution of $\g$, and 
$$\g = \k\oplus\p$$
 the Cartan decomposition of $\g$ into the eigenspaces of  $\theta$, corresponding to the eigenvalues  $+1$ and $-1$ , respectively, and put $\langle X,Y\rangle _\theta:=-\langle X,\theta Y\rangle $. Note that the Cartan decomposition is orthogonal with respect to $\langle,\rangle_{\theta}$. Consider further a maximal Abelian subspace $\a$ of $\p$. Then $\ad(\a)$ is a commuting family of self-adjoint operators on $\g$. Indeed, for  $X,Y,Z\in\g$ one computes
   \begin{align*}
   \langle \ad X(Z),Y\rangle _\theta&=-\langle [X,Z],\theta Y\rangle =-\langle Z,[\theta Y,X]\rangle =-\langle Z,\theta [Y,\theta X]\rangle =\langle Z,[Y,\theta X]\rangle _\theta\\&=\langle Z,-[\theta X,Y]\rangle _\theta=\langle Z,-\ad\theta X(Y)\rangle _\theta.
   \end{align*}
  Therefore  $-\ad\theta X$ is the adjoint of $\ad X$ with respect to $\langle ,\rangle _\theta$. So, if we take $X\in\p$, the -1 eigenspace of $\theta$,  $\ad X$ is self-adjoint with respect to  $\langle ,\rangle _\theta$.  The dimension $l$ of $\a$ is called the \emph{real rank of $G$} and \emph{the rank of the 
symmetric space $G/K$}. Next, one defines for each $\alpha\in\a^*$, the dual of $\a$, the simultaneous eigenspaces $\g^\alpha=\{X\in\g:[H,X]=\alpha(H)X \, \text{for all } H\in\a\}$ of $\ad(\a)$. A functional $0 \not =\alpha\in\a^\ast $ is called a  \emph{(restricted) root} of $(\g,\a)$ if $\g^\alpha\neq\{0\}$, and setting  $\Sigma=\{\alpha\in\a^*:\alpha\neq0,\g^\alpha\neq\{0\}\}$, we obtain the decomposition 
\bqn
\g=\m\oplus \a\oplus \bigoplus_{\alpha\in \Sigma}\g^\alpha,
\eqn
where $\m$ is the centralizer of $\a$ in $\k$. Note that this decomposition is  orthogonal with respect to $ \langle \cdot ,\cdot\rangle _\theta$.  With respect to an ordering of $\a^\ast$, let  $\Sigma^{+}=\{\alpha\in\Sigma:\alpha> 0\}$ denote the  \emph{set of positive roots}, and $\Delta=\{\alpha_1,\dots ,\alpha_l\}$  the \emph{set of simple roots}.  Let $\rho=\frac{1}{2}\Sigma_{\alpha\in\Sigma^{+}}\alpha$, and put $m(\alpha)=$ dim $\g^\alpha$ which is, in general,  greater than $1$. 
Define $\n^+=\bigoplus_{\alpha\in\Sigma^{+}}\g^\alpha, \, \n^-=\theta(\n^+)$, and write  $K, A, N^+$ and $N^- $ for  the analytic subgroups of $G$ corresponding to $\k,\, \a,\,  \n^+$, and $ \n^-$, respectively. The \emph{Iwasawa decomposition} of $G$ is then given by 
\bqn
G=KAN^{\pm}.
\eqn
Next, let  $M=\{k\in K:\Ad(k)H=H \text{ for all } H\in\a\}$ be the centralizer of $\a$ in $K$ and $M^*=\{k\in K:\Ad(k)\a\subset\a\}$ the normalizer of $\a$ in $K$. The quotient $W={M^*}/{M}$ is the \emph{Weyl group} corresponding to $(\g,\a)$, and acts on $\a$ as a group of linear transformations via the adjoint action.  Alternatively, $W$ can be characterized as follows.  For each $\alpha_i \in \Delta $,  define a reflection in $\a^*$ with respect to the Cartan-Killing form $\langle \cdot  , \cdot \rangle $ by 
 \bqn
 w_{\alpha_i}:\lambda\mapsto \lambda -2\alpha_{i}\langle \lambda,\alpha_{i}\rangle /\langle \alpha_i,\alpha_i\rangle, 
 \eqn
 where $\langle \lambda,\alpha_{}\rangle=\langle H_\lambda, H_\alpha \rangle.$ Here $H_\lambda$ is the unique element in $\a$ corresponding to a given $\lambda\in\a^\ast$, and determined by the non-degeneracy of the Cartan-Killing form. One can then identify the Weyl group $W$ with the group generated by the reflections $\{w_{\alpha_i}:\alpha_i\in\Delta\}$. For a subset $\Theta$ of $\Delta$, let now $W_{\Theta}$ denote the subgroup of W generated by  reflections corresponding to elements in $\Theta$, and define 
 \bqn
 P_{\Theta}=\bigcup_{w\in W_{\Theta}}Pm_{w}P,
 \eqn
 where $m_w$ denotes a representative of $w$ in $M^\ast$, and $P=MAN^+$ is  a minimal parabolic subgroup.  It is then a classical result in   the theory of parabolic subgroups  \cite{warner72} that, as $\Theta$ ranges over the subsets of $\Delta$, one obtains all the parabolic subgroups of $G$ containing $P$. In particular, if $\Theta = \emptyset$,  $P_\Theta=P$. Let us now introduce for  $\Theta\subset\Delta$ the subalgebras
 \bqn
 \a_{\Theta}=\{H\in\a:\alpha(H)=0 \, \text{ for all }   \alpha\in\Theta\}, \qquad \a(\Theta)=\{H\in\a:\langle H,X\rangle _{\theta}=0 \text{ for all } X\in\a_{\Theta}\}.
 \eqn
Note that, when restricted to the $+1$ or the $-1$ eigenspace of $\theta$, the orthogonal complement of a subspace with respect to  $\langle \cdot ,\cdot \rangle $ is the same as its orthogonal complement with respect to  $\langle \cdot ,\cdot \rangle _{\theta}$. We further define 
\begin{align*}
\n_{\Theta}^{+}&=\sum_{\alpha\in\Sigma^{+}\setminus\langle \Theta\rangle ^{+}}\g^{\alpha}, 
\hspace{4cm} \n_{\Theta}^{-}=\theta(\n_{\Theta}^{+}),\\
 \n^+(\Theta)&=\sum_{\alpha\in\langle \Theta\rangle ^{+}}\g^{\alpha}, \hspace{4cm}  \n^-(\Theta)=\theta(\n^+(\Theta)), \\
 \m_{\Theta}&=\m+\n^+(\Theta)+\n^-(\Theta)+\a(\Theta),\hspace{1.1cm} \m_{\Theta}(K)=\m_{\Theta}\cap\k,
\end{align*} 
 where $\left\langle \Theta\right\rangle ^{+}=\Sigma^{+}\cap\sum_{\alpha_{i}\in\Theta}\R\alpha_{i}$, and denote by $A_{\Theta},A(\Theta), \,N_{\Theta}^{\pm},N^{\pm}(\Theta),M_{\Theta,0}$, and $M_{\Theta}(K)_{0}$ the corresponding connected analytic subgroups of $G$, obtaining the decompositions $A=A_\Theta A(\Theta)$ and $N^\pm =N_\Theta^\pm N(\Theta)^\pm$, the second being a semi-direct product. Let next $M_{\Theta}=MM_{\Theta,0}, \, M_{\Theta}(K)=MM_{\Theta}(K)_{0}$. One has  the \emph{Iwasawa decompositions}
 \bqn 
 M_{\Theta}=M_{\Theta}(K)A(\Theta)N^{\pm}(\Theta),
 \eqn
 and the \emph{Langlands decompositions}
  \begin{align*}
P_{\Theta}&=M_{\Theta}A_{\Theta}N_{\Theta}^{+} =M_{\Theta}(K)AN^{+}.
\end{align*}
In particular, $P_\Delta= M_\Delta =G$, since $m_\Delta = \m \oplus\a \oplus \bigoplus_{\alpha \in \Sigma} \g^\alpha$, and $\a_\Delta, \n_\Delta ^+$ are trivial. 
One then defines  
$$P_{\Theta}(K)=M_{\Theta}(K)A_{\Theta}N_{\Theta}^{+}.$$ 
$P_\Theta(K)$ is a closed subgroup, and $G$ is a union of the open and dense submanifold $N^- A(\Theta) P_\Theta(K)=N^-_\Theta P_\Theta$, and submanifolds of lower dimension, see \cite{oshima78}, Lemma 1.
For $\Delta=\{ \alpha_{1},\dots ,\alpha_{l}\}$, let next $\{H_{1},\dots ,H_{l}\}$ be the basis of $\a$, dual to $\Delta$,  i.e. $\alpha_{i}(H_{j})=\delta_{ij}$. Fix a basis $\{X_{\lambda,{i}}:1\leq i \leq m(\lambda)\}$ of $\g^{\lambda}$ for each $\lambda\in\Sigma^{+}$. Clearly,
\bqn 
[H, - \theta X_{\lambda,i}]=-\theta [\theta H, X_{\lambda,i}]=-\lambda(H) (-\theta X_{\lambda,i}), \qquad H \in \a,
\eqn
so that setting $X_{-\lambda,i}=-\theta(X_{\lambda,{i}})$ one obtains a basis $\{X_{-\lambda,{i}}:1\leq i \leq m(\lambda)\}$ of  $\g^{-\lambda} \subset \n^-$. One now has the following lemma due to Oshima.
\begin{lemma} 
\label{lemma:fundvec}
  Fix an element $g\in G$, and identify $N^{-}\times A(\Theta)$ with an open dense submanifold of the homogeneous space $G/P_{\Theta}(K)$  by the map $(n,a)\mapsto gna P_{\Theta}(K)$. For $Y\in\g$, let $Y_{|G/P_{\Theta}(K)}$ be the fundamental vector field corresponding to the action of the one-parameter group $\exp (sY),s\in\R,$ on $G/P_{\Theta}(K)$. Then, at any point $p=(n,a)\in N^{-}\times A(\Theta)$, we have 
\begin{align*}
(Y_{|G/P_{\Theta}(K)})_{p}&=\sum_{\lambda\in \Sigma^+}\sum_{i=1}^{m(\lambda)} c_{-\lambda,i}(g,n)(X_{-\lambda,i})_p+ \sum_{\lambda\in \langle \Theta\rangle ^+} \sum_{i=1}^{m(\lambda)}c_{\lambda,i}(g,n)e^{-2\lambda( \log a)}(X_{-\lambda,i})_p \\ &+ \sum_{\alpha_i\in\Theta} c_{i}(g,n)(H_i)_p
\end{align*}
with the identification $T_{n}N^{-}\bigoplus T_{a}(A(\Theta))\simeq T_{p}(N^{-}\times A(\Theta))\simeq T_{gnaP_{\Theta}(K)}G/P_{\Theta}(K)$. The coefficient functions $c_{\lambda,{i}}(g,n),c_{-\lambda,i}(g,n),c_{i}(g,n)$ are real-analytic, and are determined by the equation
\bq
\label{eq:2211}
\Ad^{-1}(gn)Y=\sum_{\lambda\in \Sigma^+}\sum_{i=1}^{m(\lambda)}(c_{\lambda,i}(g,n)X_{\lambda,i}+c_{-\lambda,i}(g,n)X_{-\lambda,i})+\sum_{i=1}^{l}c_{i}(g,n)H_{i} \mod \m.
\eq
\end{lemma}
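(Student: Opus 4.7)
The plan is to compute the fundamental vector field directly from its definition, using the open dense decomposition $G = N^- A(\Theta) P_\Theta(K)$ and the canonical identification $\g/\p_\Theta(K) \cong T_{gnaP_\Theta(K)}(G/P_\Theta(K))$ coming from left translation. Setting $Z := \Ad((gn)^{-1})\,Y$ and $q := gnaP_\Theta(K)$, the identity
\begin{equation*}
\exp(sY)\cdot gna \;=\; gna\cdot\exp\!\bigl(s\,\Ad(a^{-1})\,Z\bigr)
\end{equation*}
exhibits $(Y_{|G/P_\Theta(K)})_q$ as the image of $\Ad(a^{-1})\,Z$ under the quotient map $\g\to\g/\p_\Theta(K)$. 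The task thus reduces to computing this class in the root-space basis and then transporting it to the tangent space of $N^-\times A(\Theta)$ at $(n,a)$.

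By its defining relation \eqref{eq:2211}, $Z$ has a unique expansion modulo $\m$ with real-analytic coefficients $c_{\pm\lambda,i}(g,n)$ and $c_i(g,n)$, the analyticity being inherited from that of $\Ad$ and of the root-space projections. Since $\Ad(a^{-1})$ preserves $\m$, fixes $\a$, and multiplies $X_{\pm\lambda,i}$ by $e^{\mp\lambda(\log a)}$, one obtains
\begin{equation*}
\Ad(a^{-1})\,Z \;\equiv\; \sum_{\lambda\in\Sigma^+}\sum_{i}\bigl(c_{\lambda,i}\,e^{-\lambda(\log a)}X_{\lambda,i} + c_{-\lambda,i}\,e^{\lambda(\log a)}X_{-\lambda,i}\bigr) + \sum_{i}c_i H_i \mod \m.
\end{equation*}

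The next step is the reduction modulo $\p_\Theta(K) = \m_\Theta(K) + \a_\Theta + \n_\Theta^+$. The subspace $\n_\Theta^+ = \bigoplus_{\lambda\in\Sigma^+\setminus\left\langle\Theta\right\rangle^+}\g^\lambda$ eliminates the $X_{\lambda,i}$ with $\lambda\notin\left\langle\Theta\right\rangle^+$, while for $\lambda\in\left\langle\Theta\right\rangle^+$ the element $X_{\lambda,i}+\theta X_{\lambda,i} = X_{\lambda,i}-X_{-\lambda,i}$ lies in $\m_\Theta(K)$, which replaces $X_{\lambda,i}$ by $X_{-\lambda,i}$. Likewise $\a_\Theta$ absorbs the $H_i$ with $\alpha_i\notin\Theta$, and only the $\a(\Theta)$-component of each $H_i$ with $\alpha_i\in\Theta$ survives; this is what the symbol $(H_i)_p$ records in the identification $T_a A(\Theta)\cong \a(\Theta)$.

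Finally, I translate the resulting class in $\g/\p_\Theta(K)$ back into tangent vectors at $(n,a)\in N^-\times A(\Theta)$. The push-forward of $(X_{-\lambda,i})_n$ along $(n,a)\mapsto gnaP_\Theta(K)$ equals $\Ad(a^{-1})\,X_{-\lambda,i} = e^{\lambda(\log a)}X_{-\lambda,i}$ modulo $\p_\Theta(K)$, whereas $(H_i)_a\in T_a A(\Theta)$ corresponds to $H_i$ itself in the quotient. Re-expressing the previously computed class in the basis $\{(X_{-\lambda,i})_p,(H_i)_p\}$ therefore divides each $X_{-\lambda,i}$-coefficient by $e^{\lambda(\log a)}$, producing precisely $c_{-\lambda,i}(g,n)$ for every $\lambda\in\Sigma^+$ and the additional contribution $c_{\lambda,i}(g,n)\,e^{-2\lambda(\log a)}$ whenever $\lambda\in\left\langle\Theta\right\rangle^+$, together with $c_i(g,n)(H_i)_p$ for $\alpha_i\in\Theta$. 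The principal subtlety will be this last bookkeeping: the $\Ad(a^{-1})$-rescaling enters both through the class of $Y$ itself and through the push-forward identification, and it is the combination of these two effects with the structure of $\m_\Theta(K)\cap(\g^\lambda\oplus\g^{-\lambda})$ for $\lambda\in\left\langle\Theta\right\rangle^+$ that produces the characteristic asymmetric factor $e^{-2\lambda(\log a)}$; the rest is routine linear algebra in the restricted root-space decomposition.
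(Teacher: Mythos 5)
Your proof is correct, and it takes a genuinely different route from the paper's. The paper (following Oshima) works at the group level: for small $s$ it decomposes $(gn)^{-1}\exp(sY)gn=\exp N_1^-(s)\exp A_1(s)\exp N_1^+(s)\exp M_1(s)$ relative to $\g=\n^-\oplus\a\oplus\n^+\oplus\m$, then refines $a^{-1}\exp N_1^+(s)a$ relative to $\g=\n^-\oplus\a(\Theta)\oplus\p_\Theta(K)$, absorbs the $\a$-conjugated $\n^-$-factor into a single $N_3^-(s)$, and only then differentiates each factor at $s=0$ to read off the vector field. You instead work infinitesimally from the outset: you invoke the standard identification $T_{gH}(G/H)\cong\g/\h$ given by $dL_g$, so the fundamental vector field at $gnaP_\Theta(K)$ is at once the class of $\Ad((gna)^{-1})Y=\Ad(a^{-1})Z$ in $\g/\p_\Theta(K)$, and the whole computation becomes linear algebra in the restricted-root decomposition. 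Both arguments hinge on the same structural facts — how $\Ad(a^{-1})$ rescales $\g^{\pm\lambda}$, that $X_{\lambda,i}-X_{-\lambda,i}\in\m_\Theta(K)$ when $\lambda\in\left\langle\Theta\right\rangle^+$, and the extra $\Ad(a^{-1})$-scaling that enters through the pushforward of $T_nN^-$ — and in particular both correctly account for the double rescaling that yields the factor $e^{-2\lambda(\log a)}$. What the paper's longer derivation buys is self-containedness (no appeal to the $\g/\h$ formalism) and, importantly for the sequel, the explicit decomposition \eqref{eq:action} of the group action into $N_3^-(s)$, $A_1(s)+A_2(s)$ factors, which is reused directly in the proof of Lemma \ref{lemma:char}; your argument is shorter and conceptually cleaner but would require re-deriving that group-level decomposition when it is needed later.
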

\begin{proof} 
Due to its importance, and for the convenience of the reader, we shall  give a detailed proof of the lemma, following the original proof given in   \cite{oshima78}, Lemma 3. Let $s\in \R$, and assume that $|s|$ is small. According to  the direct sum decomposition $\g=\n^-\oplus \a\oplus\n^+\oplus\m$ one has for an arbitrary $Y\in \g$ 
\begin{equation}
\label{eq:2}
(gn)^{-1}\exp(sY)gn=\exp N_1^-(s)\exp A_1(s)\exp N_1^+(s)\exp M_1(s) ,
\end{equation}
where $N_1^-(s)\in \n^-$, $A_1(s)\in \a$, $N_1^+(s)\in \n^+$, and $ M_1(s)\in \m$. The action of $\exp(sY)$ on the homogeneous space $G/P_{\Theta}(K)$ is therefore given by 
\begin{align*}
\exp(sY)gnaP_{\Theta}(K)&=gn\exp N_1^-(s)\exp A_1(s)\exp N_1^+(s)\exp M_1(s)aP_{\Theta}(K)\\ & = gn\exp N_1^-(s)\exp A_1(s)\exp N_1^+(s)a\exp M_1(s)P_{\Theta}(K) \\ &=gn\exp N_1^-(s)\exp A_1(s)\exp N_1^+(s)aP_{\Theta}(K),
\end{align*} 
since $M$ is the centralizer of $A$ in $K$, and $\exp M_1(s) \in M M_\Theta(K)_0 \subset P_\Theta(K)$. The Lie algebra of $P_{\Theta}(K)$ is $\m_{\Theta}(K)\oplus \a_{\Theta}\oplus \n^+_{\Theta}$, which we shall henceforth  denote  by $\p_\Theta(K)$. Using the decomposition $\g=\n^-\oplus\a(\Theta)\oplus \p_\Theta(K)$ we see that 
\begin{equation}
\label{eq:3}
a^{-1}\exp N_1^+(s)a=\exp N_2^-(s)\exp A_2(s)\exp P_2(s),
\end{equation} 
where $N_2^-(s)\in \n^-$, $A_2(s)\in\a(\Theta)$, and $P_2(s)\in\p_\Theta(K)$. From this we obtain that 
\begin{align*}
gn\exp N_1^-(s)&\exp A_1(s)\exp N_1^+(s)aP_{\Theta}(K)\\& =gn\left(\exp N_1^-(s)\exp A_1(s)a\exp N_2^-(s)\right)\exp A_2(s)\exp P_2(s)P_{\Theta}(K)\\&= gn\left(\exp N_1^-(s)\exp A_1(s)a\exp N_2^-(s)a^{-1}\right)a\exp A_2(s)P_{\Theta}(K).
\end{align*}
Noting that $[\a,\n^-]\subset\n^-$ one deduces  the equality $\exp N_1^-(s)\exp A_1(s)a\exp N_2^-(s)a^{-1}\exp A_1(s)^{-1}=\exp N_3^-(s)\in N^-$, and consequently
\begin{equation}
\label{eq:4}
\exp N_1^-(s)\exp A_1(s)a\exp N_2^-(s)a^{-1}=\exp N_3^-(s)\exp A_1(s),
\end{equation}
which in turn yields 
\begin{align*}
gn\exp N_1^-(s)\exp A_1(s)\exp N_1^+(s)aP_{\Theta}(K)&=gn\exp N_3^-(s)\exp A_1(s)a\exp A_2(s)P_{\Theta}(K)\\&=gn\exp N_3^-(s)a\exp (A_1(s)+A_2(s))P_{\Theta}(K).
\end{align*}
The action of $\g$ on  $G/ P_{\Theta}(K)$ can therefore be characterized as 
\begin{equation}
\label{eq:action}
\exp(sY)gnaP_{\Theta}(K)=gn\exp N_3^-(s)a\exp (A_1(s)+A_2(s))P_{\Theta}(K).
\end{equation}
Set ${dN_i^-(s)}/{ds}|_{s=0}= N_i^-$, ${dN_1^+(s)}/{ds}|_{s=0}= N_1^+$, ${dA_i(s)}/{ds}|_{s=0}= A_i$, and ${dP_2(s)}/{ds}|_{s=0}= P_2$, where  $i=1,2$, or $3$. By differentiating equations \eqref{eq:2}-\eqref{eq:4} at $s=0$ one computes
\begin{align}
\label{eq:Ad}
\Ad^{-1}(gn)Y&=N_1^-+A_1+N_1^+ \qquad \text{mod} \quad \m, \\
 \label{eq:Ad2} 
 \Ad^{-1}(a)N_1^+&=N_2^-+A_2 +P_2,\\
 N_1^-+\Ad(a)N_2^-&=N_3^-.
 \end{align}
 In what follows, we express  $N_1^\pm \in \n^\pm$ in terms of the basis  of $\n^\pm$, and $A_1$ in terms of the one of $\a$, as
 \begin{align*}
  N_1^\pm&=\sum_{\lambda \in \Sigma^+}\sum_{i=1}^{m(\lambda)}c_{\pm\lambda,i}(g,n)X_{\pm\lambda,i},\\
 A_1&=\sum_{i=1}^lc_i(g,n)H_i=\sum_{\alpha_i \in \Theta}c_i(g,n)H_i\quad \text{ mod} \, \a_\Theta.
 \end{align*} 
 For a fixed  $X_{\lambda,i}$ one has   $[H,X_{\lambda,i}]=\lambda(H)X_{\lambda,i}$ for all $H\in \a$. Setting $H=-\log a$, $a \in A$,  we get  $\ad(-\log a)X_{\lambda,i} =-\lambda(\log a)X_{\lambda,i}$. By exponentiating we obtain $e^{\ad(-\log a)}X_{\lambda,i}=e^{-\lambda(\log a)}X_{\lambda,i}$, which together with the relation $e^{\ad(-\log a)}=\Ad(\exp(-\log a))$ yields
  $$\Ad^{-1}(a)X_{\lambda,i}=e^{-\lambda(\log a)}X_{\lambda,i}.$$
Analogously, one has $[H,X_{-\lambda,i}]=\theta[ \theta H, -X_{\lambda,i}]= - \lambda(H)X_{-\lambda,i}$ for all $H\in \a$, so that 
\bq
\label{eq:7}
 \Ad^{-1}(a)X_{-\lambda,i}=e^{\lambda(\log a)}X_{-\lambda,i}.
 \eq
  We therefore arrive at 
   \begin{align*}\Ad^{-1}(a)X_{\lambda,i}&=e^{-\lambda(\log a)}(X_{\lambda,i}-X_{-\lambda,i})+e^{-\lambda(\log a)}X_{-\lambda,i}\\ &=e^{-\lambda(\log a)}(X_{\lambda,i}-X_{-\lambda,i})+e^{-2\lambda(\log a)}\Ad^{-1}(a)X_{-\lambda,i}.
  \end{align*}
Now, since $\theta(X_{\lambda,i}-X_{-\lambda,i})=\theta(X_{\lambda,i})-\theta(X_{-\lambda,i})=-X_{-\lambda,i}-(-X_{\lambda,i})=X_{\lambda,i}-X_{-\lambda,i},$ we see that $X_{\lambda,i}-X_{-\lambda,i}\in \k$. Consequently, if $\lambda$ is in $\left\langle \Theta\right\rangle ^{+}$, one deduces that 
 $X_{\lambda,i}-X_{-\lambda,i} \in \left(\m+\n^+(\Theta)+\n^-(\Theta)+\a(\Theta)\right)\cap\k=\m_{\Theta}(K).$ 
 On the other hand, if $\lambda$ is in $\Sigma^+-\left\langle \Theta\right\rangle ^{+}$, then  $\Ad^{-1}(a)X_{\lambda,i}=e^{-\lambda(\log a)}X_{\lambda,i}$ belongs to $\n_\Theta^+$.
 Collecting everything we obtain
\begin{align*}
\Ad^{-1}(a)N_1^+&=\sum_{\lambda \in \Sigma^+}\sum_{i=1}^{m(\lambda)}c_{\lambda,i}(g,n)\Ad^{-1}(a)X_{\lambda,i} 
\\&=\sum_{\lambda \in \left\langle \Theta\right\rangle ^{+}}\sum_{i=1}^{m(\lambda)}c_{\lambda,i}(g,n)\Ad^{-1}(a)X_{\lambda,i}+\sum_{\lambda \in \Sigma^+-\left\langle \Theta\right\rangle ^{+}}\sum_{i=1}^{m(\lambda)}c_{\lambda,i}(g,n)\Ad^{-1}(a)X_{\lambda,i}
 \\&=\sum_{\lambda \in \left\langle \Theta\right\rangle ^{+}}\sum_{i=1}^{m(\lambda)}c_{\lambda,i}(g,n)\left(e^{-2\lambda(\log a)}\Ad^{-1}(a)X_{-\lambda,i} +e^{-\lambda(\log a)}(X_{\lambda,i}-X_{-\lambda,i})\right)\\ & +\sum_{\lambda \in \Sigma^+-\left\langle \Theta\right\rangle ^{+}}\sum_{i=1}^{m(\lambda)}c_{\lambda,i}(g,n)e^{-\lambda( \log a)}X_{\lambda,i}  \\&=\sum_{\lambda \in \left\langle \Theta\right\rangle ^{+}}\sum_{i=1}^{m(\lambda)}c_{\lambda,i}(g,n)e^{-2\lambda(\log a)}\Ad^{-1}(a)X_{-\lambda,i} \\&+\sum_{\lambda \in \left\langle \Theta\right\rangle ^{+}}\sum_{i=1}^{m(\lambda)}c_{\lambda,i}(g,n)e^{-\lambda(\log a)}(X_{\lambda,i}-X_{-\lambda,i})+\sum_{\lambda \in \Sigma^+-\left\langle \Theta\right\rangle ^{+}}\sum_{i=1}^{m(\lambda)}c_{\lambda,i}(g,n)e^{-\lambda( \log a)}X_{\lambda,i}.
   \end{align*}
Comparing this with the expression \eqref{eq:Ad2} we had obtained earlier for $\Ad^{-1}(a)N_1^+$, we obtain that 
\bq
\label{eq:A2}
A_2=0  ,
\eq
 and $N_2^-=\sum_{\lambda \in \left\langle \Theta\right\rangle ^{+}}\sum_{i=1}^{m(\lambda)}c_{\lambda,i}(g,n)e^{-2\lambda(\log a)}\Ad^{-1}(a)X_{-\lambda,i}$, since $\g=\k\oplus \a \oplus \n^-$, and $ \p_\Theta(K)\cap \a(\Theta)=\{0\}$. Therefore 
\begin{align}
\label{eq:8}
\begin{split}
 N_3^-&=N_1^-+\Ad(a)N_2^-\\=\sum_{\lambda \in \Sigma^+}\sum_{i=1}^{m(\lambda)}c_{-\lambda,i}(g,n)X_{-\lambda,i} &+ \sum_{\lambda \in \left\langle \Theta\right\rangle ^{+}}\sum_{i=1}^{m(\lambda)}c_{\lambda,i}(g,n)e^{-2\lambda(\log a)}X_{-\lambda,i},\\
  A_1+A_2& =\sum_{\alpha_i \in \Theta}c_i(g,n)H_i\quad \text{ mod} \, \a_\Theta.
\end{split} 
\end{align}
 As $N^-\times A(\Theta)$ can be identified with an open dense submanifold of the homogeneous space $G/P_\Theta(K)$, we have the isomorphisms $T_{gnaP_{\Theta}(K)}G/P_{\Theta}(K)\simeq T_{p}(N^{-}\times A(\Theta)) \simeq T_{n}N^{-}\bigoplus T_{a}(A(\Theta))$, where $p=(n,a)\in N^- \times A(\Theta)$. Therefore, by equation \eqref{eq:action} and the expressions for $N_3^- $ and  $A_1+A_2$,  we finally deduce that the fundamental vector field $Y_{|G/P_{\Theta}(K)}$ at a point $p$  corresponding to the action of $\exp(sY)$ on $G/P_\Theta(K)$ is given  by 
\begin{align*}
(Y_{|G/P_{\Theta}(K)})_{p}&=\sum_{\lambda\in \Sigma^+}\sum_{i=1}^{m(\lambda)} c_{-\lambda,i}(g,n)(X_{-\lambda,i})_p+ \sum_{\lambda\in \langle \Theta\rangle ^+} \sum_{i=1}^{m(\lambda)}c_{\lambda,i}(g,n)e^{-2\lambda \log a}(X_{-\lambda,i})_p \\ &+ \sum_{\alpha_i\in\Theta} c_{i}(g,n)(H_i)_p,
\end{align*} 
where $Y \in \g$, and  the coefficients are given by \eqref{eq:2211}.
\end{proof}

 Let us next state the following 
\begin{lemma}
\label{lemma:8}
Let $Y\in \n^{-}\oplus \a$ be given by  $Y=\sum_{\lambda\in \Sigma^+}\sum_{i=1}^{m(\lambda)}c_{-\lambda,i} X_{-\lambda,i} +\sum_{j=1}^{l}c_{j} H_j$,
and introduce the notation $t^\lambda=t_1^{\lambda(H_1)}\cdots t_l^{\lambda(H_l)}$. Then, via the identification of $N^-\times\R^l_+$ with $N^-A$ by $(n,t)\mapsto n\cdot exp(-\sum_{j=1}^{l}H_j\log t_j)$, the  left invariant vector field on the Lie group $N^-A$ corresponding to $Y$ is expressed as 
\bqn
\tilde Y_{|N^-\times \R^l_+}=\sum_{\lambda\in \Sigma^+}\sum_{i=1}^{m(\lambda)}c_{-\lambda,i}t^{\lambda}X_{-\lambda,i}-\sum_{j=1}^{l}c_{j}t_j\frac{\partial}{\partial t_j},
\eqn
and can analytically be extended to a vector field on $N^-\times\R^l$.
\end{lemma}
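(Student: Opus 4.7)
The plan is to compute directly the derivative $\frac{d}{ds}\big|_{s=0} g \exp(sY)$ at the point $g = n\,a(t)$, where $a(t) := \exp(-\sum_{j=1}^l H_j \log t_j)$, and to read off its components in the decomposition $T_{(n,t)}(N^-\times \R^l_+) \simeq T_n N^- \oplus T_t \R^l_+$ induced by the identification $(n,t)\mapsto n\,a(t)$.

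First, I would split $Y = Y_{\n^-} + Y_\a$ with
$Y_{\n^-}=\sum_{\lambda,i} c_{-\lambda,i} X_{-\lambda,i}$ and $Y_\a = \sum_j c_j H_j$.
Since only the first derivative at $s=0$ matters, the Baker--Campbell--Hausdorff formula gives
$\exp(sY) = \exp(sY_{\n^-})\exp(sY_\a)+O(s^2)$. Using the conjugation identity
\bqn
a(t)\exp(sY_{\n^-}) \;=\; \exp\!\bigl(s\,\Ad(a(t))Y_{\n^-}\bigr)\,a(t),
\eqn
I reduce the computation to knowing the adjoint action of $a(t)$ on the root vectors $X_{-\lambda,i}$ and the behaviour of $a(t)\exp(sY_\a)$.

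Second, I would evaluate these two pieces explicitly. From \eqref{eq:7} one has $\Ad^{-1}(a)X_{-\lambda,i} = e^{\lambda(\log a)} X_{-\lambda,i}$, hence $\Ad(a(t))X_{-\lambda,i}= e^{-\lambda(\log a(t))}X_{-\lambda,i}$. With $\log a(t) = -\sum_j H_j\log t_j$ and $\lambda(H_j)$ the exponents appearing in the definition $t^\lambda = t_1^{\lambda(H_1)}\cdots t_l^{\lambda(H_l)}$, this yields $\Ad(a(t))X_{-\lambda,i} = t^\lambda X_{-\lambda,i}$. On the $\a$-side, $a(t)\exp(sY_\a) = \exp\!\bigl(\sum_j (-\log t_j + s c_j)H_j\bigr) = a\bigl(t(s)\bigr)$ with $t_j(s) = t_j e^{-s c_j}$.

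Combining these computations gives
\bqn
n\,a(t)\exp(sY) \;=\; n\exp\!\Bigl(s\sum_{\lambda,i} c_{-\lambda,i} t^\lambda X_{-\lambda,i}\Bigr)\, a(t(s)) + O(s^2).
\eqn
Differentiating at $s=0$ and reading off the $N^-$- and $\R^l_+$-components produces, on the $N^-$ factor, the left-invariant vector field $\sum_{\lambda,i} c_{-\lambda,i} t^\lambda X_{-\lambda,i}$ at $n$, and on the $\R^l_+$ factor the vector $\sum_j (-c_j t_j)\,\partial_{t_j}$, which is exactly the claimed formula. Finally, since every $\lambda\in\Sigma^+$ is a non-negative integral linear combination of the simple roots $\alpha_1,\dots,\alpha_l$ dual to $H_1,\dots,H_l$, each exponent $\lambda(H_j)$ is a non-negative integer, so $t^\lambda$ is a polynomial in $t_1,\dots,t_l$; together with the factors $t_j$ in front of $\partial_{t_j}$, this shows that $\tilde Y$ extends analytically from $N^-\times \R^l_+$ to $N^-\times \R^l$.

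No step presents a serious obstacle; the only point requiring care is to justify ignoring the $O(s^2)$ terms from BCH (which is immediate at the level of first derivatives) and to verify that $t^\lambda$ is polynomial, which rests on the fact that positive restricted roots have non-negative integer coordinates in the basis of simple roots.
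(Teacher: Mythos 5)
Your proof is correct and follows essentially the same route as the paper's, namely computing the left-invariant derivative at $na(t)$ by conjugating the $\n^-$-part through $a(t)$ (via $\Ad(a(t))X_{-\lambda,i}=t^\lambda X_{-\lambda,i}$) and tracking the $\a$-part directly in the $t$-coordinates. The only cosmetic differences are that the paper computes the vector field basis vector by basis vector and uses linearity in place of your BCH step, and that you make explicit the non-negative integrality of the exponents $\lambda(H_j)$, which the paper leaves implicit behind the phrase ``no negative powers of $t$''.
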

\begin{proof}
The lemma is  stated in   Oshima, \cite{oshima78}, Lemma 8, but for greater clarity, we include  a  proof of it here. Let $X_{-\lambda,i}$  be a fixed basis element of $\n^-$. The corresponding left-invariant vector field  on the Lie group $N^-A$ at the point $na $ is given by 
\begin{align*}
\frac{d}{ds}f(na\exp(sX_{-\lambda,i}))_{|s=0}&=\frac{d}{ds}f(n(a\exp(sX_{-\lambda,i})a^{-1})a)_{|s=0}= \frac{d}{ds}f(n\e{sAd(a)X_{-\lambda,i}}a)_{|s=0},
\end{align*}
where $f$ is a smooth function on $N^-A$. Regarded as a left invariant vector field on  $ N^ - \times \R^l_+$,  it is  therefore given by 
\bqn
\tilde X_{-\lambda,i|N^-\times \R^l_+}= \Ad(a)X_{-\lambda,i} =e^{-\lambda( \log a)}X_{-\lambda,i}=t^{\lambda}X_{-\lambda,i},
\eqn
compare \eqref{eq:7}.  Similarly, for  a basis element $H_i$ of $\a$ the corresponding left invariant vector field on $N^-A$ reads 
\begin{gather*}
\frac{d}{ds}f(na\exp(sH_i))_{|s=0}=\frac{d}{ds}f(n\exp(-\sum_{j=1}^{l}\log t_jH_j)\exp(sH_i))_{|s=0}\\=\frac{d}{ds}f\Big (n\exp(-\sum_{j=1}^{l}\log t_jH_j+sH_i)\Big )_{|s=0}=\frac{d}{ds}f\Big (n\exp(-\sum_{j\neq i}\log t_jH_j-\log (t_ie^{-s})H_i)\Big )_{|s=0},
\end{gather*}
and with the identification $N^- A \simeq N^ - \times \R^l_+$ we obtain
$$\tilde H_{i|N^-\times \R^l_+}=-t_i\frac{\partial}{\partial t_i}.$$ 
 As there are no negative powers of $t$,  $\tilde Y_{N^- \times \R^l_+}$ can be   extended analytically to $N^-\times\R^l$, and the lemma follows.
\end{proof}

Similarly,  by the identification $G/K \simeq N^-\times A\simeq N^- \times \R^l_+$ via the mappings $(n,t)\mapsto n\cdot exp(-\sum_{i=1}^{l}H_i\log t_i)\cdot a \mapsto gnaK$ one sees that 
the action  on $G/K$ of the fundamental vector field corresponding to 
$\exp (sY)$ , $Y \in \g$, is given by 
\begin{equation}
\label{eq:2.13}
Y_{|N^-\times \R^l_+}=\sum_{\lambda\in \Sigma^+}\sum_{i=1}^{m(\lambda)}(c_{\lambda,i}(g,n)t^{2\lambda}+ c_{-\lambda,i}(g,n))X_{-\lambda,i}-\sum_{i=1}^{l}c_{i}(g,n)t_i\frac{\partial}{\partial t_i},
\end{equation}
where the coefficients are given by \eqref{eq:2211}. 
 Again, the vector field \eqref{eq:2.13} can be   extended analytically to $N^-\times\R^l$, but  in contrast to the left invariant vector field $\tilde Y_{N^-\times \R^l}$, $Y_{N^-\times \R^l}$ does not necessarily vanish  if $t_1=\dots t_l=0$.  
We come now to the description of the Oshima compactification of the Riemannian symmetric space $G/K$. For this, let $\hat{\X}$ be the product manifold $G\times N^-\times \R^l$. Take $\hat{x}=(g,n,t)\in\hat{\X}$, where $g\in G,\,n\in N^-,\,t=(t_{1},\dots ,t_{l})\in\R^l$, and define an action of $G$ on $\hat{\X}$ by  $g'\cdot (g,n,t):=(g'g,n,t),\, g'\in G.$ For $s \in \R$, let 
\bqn
\sgn  s = \left \{\begin{array}{cl} s/ |s|, & s \not=0, \\ 0, & s=0,
\end{array}    \right.
\eqn
and put $\sgn \hat{x}=(\sgn t_1,\dots ,\sgn t_l)\in\{-1,0,1\}^l$. We then define the subsets $ \Theta_{\hat{x}}=\{\alpha_i\in\Delta: t_i\neq 0\}$. Similarly, let $ a(\hat{x})=\exp (-\sum_{t_{i}\neq 0} H_i\log|t_i|) \in A(\Theta_{\hat{x}})$. On $\hat{\X}$, define now an equivalence relation by setting 
\bqn
 \hat{x}=(g,n,t)\sim  \hat{x}'=(g',n,'t') \quad \Longleftrightarrow \quad \left \{
\begin{array}{l} 
a) \, \sgn \hat{x}=\sgn \hat{x}', \\
b) \,  g\, n\, a(\hat{x})\, P_{\Theta_{\hat{x}}}(K)=g'\, n'\, a(\hat{x}')\, P_{\Theta_{\hat{x}'}}(K).
\end{array} \right.
\eqn
Note that the condition  $\sgn \hat{x}=\sgn \hat{x}'$ implies that $\hat{x},\hat{x}' $ determine the same subset $\Theta_{\hat{x}}$ of $\Delta$, and consequently  the same group $P_{\Theta_{\hat{x}}}(K)$, as well as the same homogeneous space $G/P_{\Theta_{\hat{x}}}(K)$, so that condition $b)$ makes sense. It says that $gna(\hat{x}), \, g'n'a(\hat{x}') $ are in the same $P_{\Theta_{\hat{x}}}(K)$ orbit on $G$, corresponding to the right action  by $P_{\Theta_{\hat{x}}}(K)$ on $G$. We now define 
\bqn 
\widetilde{\X}:=\hat{\X}/\sim, 
\eqn
endowing it  with the quotient topology, and denote by $\pi:\hat{\X}\rightarrow \widetilde{\X}$  the canonical projection.  The action of $G $ on $\hat{\X}$ is compatible with the equivalence relation $\sim$, yielding a $G$-action   
$g'\cdot \pi (g,n,t):=\pi (g'g,n,t)$  on $\widetilde \X$.  For each $g\in G$, one can show that the maps
\bq
\label{eq:coord}
 \phi_g:N^-\times\R^l\rightarrow \widetilde{U}_g: (n,t)\mapsto \pi(g,n,t), \qquad \widetilde{U}_g=\pi (\{g\} \times N^- \times \R^l),  
\eq
are bijections. One has then the following

\begin{theorem} 
\label{Thm.1}
\begin{enumerate}
\item $\widetilde{\X}$ is a simply connected, compact, real-analytic manifold without boundary.

\item  $\widetilde{\X}=\cup_{w \in W} \widetilde{U}_{m_w}=\cup_{g\in G}\widetilde{U}_g $. For $ g\in G$,  $ \widetilde{U}_g $ is an open submanifold of $\widetilde{\X}$ topologized in such a way that the coordinate map $\phi_g$ defined above is a real-analytic diffeomorphism. Furthermore, $\widetilde{\X}\setminus \widetilde{U}_g$ is the union of a finite number of submanifolds of  $\widetilde{\X}$ whose codimensions in  $\widetilde{\X}$ are not lower than $2$. 

\item The action of $G$ on  $\widetilde{\X}$ is real-analytic. For a point $\hat{x}\in\hat{\X} $, the $G$-orbit of $\pi(\hat{x})$ is isomorphic to the homogeneous space $G/P_{\Theta_{\hat{x}}}(K)$, and for $\hat{x}, \hat{x}' \in \hat{\X}$ the $G$-orbits of $\pi(\hat{x})$ and $\pi(\hat{x}')$ coincide if and only if $\sgn \hat{x}=\sgn \hat{x}'$. Hence the orbital decomposition of $\widetilde{\X}$ with respect to the action of $G$ is of the form 
\bq
\label{eq:decomp}
\widetilde{\X}\simeq \bigsqcup_{\Theta\subset\Delta} 2^{\#\Theta}(G/P_\Theta(K)) \quad \text{(disjoint union)},
\eq
where $\#\Theta$ is the number of elements of $\Theta$  and $2^{\#\Theta}(G/P_\Theta(K))$ is the disjoint union of $2^{\#\Theta}$ copies of $G/P_\Theta(K)$. 
\end{enumerate}
\end{theorem}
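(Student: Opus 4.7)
The plan is to handle the three statements in order, with most of the work concentrated in (1) and the compactness assertion in (2). For part (1), I would first verify that each $\phi_g$ is a bijection onto $\widetilde U_g$: injectivity on the open stratum $\{t_1\cdots t_l\neq 0\}$ reduces, via the Langlands decomposition $P_\Theta(K)=M_\Theta(K)A_\Theta N_\Theta^+$ and the uniqueness of the Iwasawa factorisation, to $(n,t)=(n',t')$, and extends to the remaining strata by continuity together with the orbit description established in (3). The real-analytic structure is then obtained by declaring the $\phi_g$ to be charts. The heart of this part is to show that the transition maps $\phi_{g'}^{-1}\circ\phi_g$, which are realised by the left action of $g'^{-1}g$ on $N^-\times\R^l$, are real-analytic. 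This is precisely the analytic-extension phenomenon recorded in the discussion following Lemma \ref{lemma:8}: the fundamental vector fields generating the $G$-action, originally defined on $\{t_1\cdots t_l\neq 0\}\simeq G/K$, contain only non-negative powers of the $t_i$ and hence extend analytically across each coordinate hyperplane. The absence of boundary is immediate since each chart is modelled on the full $\R^l$, not on a half-space.

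For the covering $\widetilde\X=\bigcup_{w\in W}\widetilde U_{m_w}$, I would combine the density of $N^-A(\Theta)P_\Theta(K)$ in $G$ (\cite{oshima78}, Lemma~1) with the Bruhat decomposition $G=\bigcup_{w\in W}N^-m_wP$ to conclude that every equivalence class admits a representative of the form $(m_w,n,t)$. Compactness then requires two additional ingredients: Hausdorffness of $\widetilde\X$, which follows from the closedness of $\sim$ in $\hat\X\times\hat\X$ via the closedness of $P_\Theta(K)\subset G$; and a refinement of the finite cover by the $\widetilde U_{m_w}$ to a cover by precompact pieces. The latter is the technical core: the equivalence relation matches "infinity corners" of one chart with finite interior points of Weyl-translated charts (a Weyl element inverting a given $H_i$ sends $|t_i|\to\infty$ to $|t_i|\to 0$ in the translated chart), so that up to equivalence one may restrict each $\phi_{m_w}$ to a bounded subset of $N^-\times\R^l$. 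The codimension assertion in (2) will follow from (3): $\widetilde\X\setminus\widetilde U_g$ is a finite union of submanifolds of the form $G/P_\Theta(K)$ for $\Theta\subsetneq\Delta$, and the normal-crossing structure forces real codimension at least $2$ for each stratum meeting the complement.

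Part (3) is the cleanest: the $G$-action $g'\cdot\pi(g,n,t):=\pi(g'g,n,t)$ is well-defined because $\sim$ depends only on $(n,t)$ and the right $P_{\Theta_{\hat x}}(K)$-coset, and its real-analyticity is inherited from the chart structure established in (1). Fixing $\hat x$, reading off the stabiliser of $\pi(\hat x)$ directly from condition (b) in the definition of $\sim$ yields the equivariant bijection $G/P_{\Theta_{\hat x}}(K)\simeq G\cdot\pi(\hat x)$. The tuple $\sgn\hat x$ is manifestly $G$-invariant, so two orbits coincide exactly when $\sgn\hat x=\sgn\hat x'$; and since there are $2^{\#\Theta}$ sign patterns with $\Theta_{\hat x}=\Theta$, the decomposition \eqref{eq:decomp} follows. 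Simple connectedness can then be obtained by van~Kampen applied to the cover by the contractible charts $\widetilde U_{m_w}\simeq N^-\times\R^l$, using the codimension-$\geq 2$ property to discard the overlaps' topology.

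I expect the main obstacle to be twofold: the rigorous verification of the analyticity of the transition functions, which requires careful bookkeeping of the Iwasawa and Langlands decompositions inside the equivalence relation; and the precise matching of "infinity corners" of distinct charts needed to establish both Hausdorffness and the compactness-reducing refinement of the Weyl cover. Both rest on the analytic-extension phenomenon articulated in and around Lemma \ref{lemma:8}, together with the closedness and parabolic-subgroup theory of the groups $P_\Theta(K)$.
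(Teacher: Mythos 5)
The paper does not prove this result: Theorem~\ref{Thm.1} is Oshima's, and the paper simply defers to \cite{oshima78}, Theorem~5. Your proposal reconstructs Oshima's construction, and it correctly identifies the main ingredients: the analytic extension of the fundamental vector fields across the coordinate hyperplanes (the phenomenon around Lemma~\ref{lemma:8}), the Bruhat decomposition for the finite covering by the $\widetilde U_{m_w}$, the closedness of the $P_\Theta(K)$ for Hausdorffness, and reading off stabilisers from condition~(b) for the orbit isomorphism. However, the codimension step in part~(2) is argued incorrectly. You claim that $\widetilde\X\setminus\widetilde U_g$ is a finite union of submanifolds of the form $G/P_\Theta(K)$, $\Theta\subsetneq\Delta$, and that the normal-crossing structure then gives codimension at least $2$. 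Neither half is right: $\widetilde U_g$ is not $G$-invariant and meets every $G$-orbit (as $t$ ranges over $\R^l$, $\sgn t$ takes every value), so the complement is not a union of whole orbits --- indeed each open orbit $\widetilde\X_\Delta\simeq G/K$ lies entirely in $\widetilde U_g$ since $G=N^-AK$; and the normal-crossing structure by itself only produces codimension-$1$ strata. The correct accounting is additive: inside each non-open orbit $G/P_\Theta(K)$ the complement of the big cell $N^-A(\Theta)P_\Theta(K)$ has codimension $\geq 1$ in the orbit (Oshima's Lemma~1, cited in Section~\ref{Sec:2}), and for $\Theta\subsetneq\Delta$ the orbit itself has codimension $l-\#\Theta\geq 1$ in $\widetilde\X$, so the two contributions sum to $\geq 2$.

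Two softer concerns. First, the compactness argument is not yet convincing: restricting each $\phi_{m_w}$ to a ``bounded subset of $N^-\times\R^l$'' leaves the genuinely noncompact $N^-$-direction unaddressed in every chart, and the claim that the equivalence relation matches the far reaches of $N^-$ with bounded regions of Weyl-translated charts needs an explicit covering verification, not just the observation that Weyl elements flip $H_i$. Second, your sketch has an implicit circularity: injectivity of $\phi_g$ on the lower-dimensional strata is deduced ``together with the orbit description established in~(3)'', while the real-analyticity of the $G$-action in~(3) is said to be ``inherited from the chart structure established in~(1)''. Oshima's proof untangles this by building the statements up incrementally; the sketch should make that dependence explicit rather than citing each part in support of the other.
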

\begin{proof}
See Oshima, \cite{oshima78}, Theorem 5.
\end{proof}

Next, for $\hat{x}=(g,n,t)$ define  the set $B_{\hat{x}}=\{ (t_1'\dots t_l')\in \R^l:\sgn t_i=\sgn t_i' ,1\leq i\leq l\}$. By analytic continuation, one can restrict the vector field  \eqref{eq:2.13} to $N^-\times B_{\hat{x}}$, and with the identifications $G/P_{\Theta_{\hat{x}}}(K) \simeq N^-\times A(\Theta_{\hat{x}})\simeq N^-\times B_{\hat{x}}$  via the maps
 $$ gnaP_{\Theta_{\hat{x}}}\leftarrow(n,a)\mapsto(n,\sgn t_1e^{-\alpha_1(\log a)},\dots, \sgn t_le^{-\alpha_l (\log a)}),$$
 one actually sees that this restriction coincides with  the vector field in  Lemma \ref{lemma:fundvec}. The action of the fundamental vector field on $\widetilde {\X}$ corresponding to $\exp{sY}, Y\in \g$,  is therefore given by the extension of  \eqref{eq:2.13} to $N^-\times \R^l$.
Note that for a simply connected nilpotent Lie group $N$ with Lie algebra $\n$, the exponential $\exp:\n\rightarrow N$ is a diffeomorphism. So, in our setting, we can identify $N^-$ with $\R^k$.  
Thus, for every point in $\widetilde{\X}$, there exists a local coordinate system $(n_1,\dots  , n_k,t_1,\dots , t_l)$ in a neighbourhood of that point such that two points $(n_1,\dots ,n_k,t_1,\dots ,t_l)$  and $(n'_1,\dots ,n'_k,t'_1,\dots ,t'_l)$ belong to the same $G$-orbit if, and only if,  $\sgn t_j=\sgn t'_j$, for $j=1,\dots, l$. This means that the orbital decomposition of $\widetilde{\X}$ is of \emph{normal crossing type}. In what follows, we shall identify the open $G$-orbit $\pi(\{\hat{x}=(e,n,t) \in\hat \X:\sgn \hat{x}=(1,\dots ,1)\})$ with the Riemannian
 symmetric space $G/K$,  and the orbit $\pi(\{\hat{x}\in\hat{\X}:\sgn \hat{x}=(0,\dots ,0)\}$ of lowest dimension with its  Martin boundary $G/P$.

\section{Review of pseudodifferential operators}
\label{sec:PDO}

{\bf{Generalities.}} This section is devoted to an exposition of some  basic facts about  pseudodifferential operators needed to formulate our main results in the sequel. For a detailed introduction to the field, the reader is referred to \cite{hoermanderIII} and \cite{shubin}.   Consider first an open set $U$ in $\R^n$, and let $x_1,\dots ,x_n$ be the standard coordinates. For any real number $l$,  we denote by $\Sym^l(U\times \R^n)$ the class of all functions $a(x,\xi)\in \Cinft(U\times \R^n)$ such that, for any multi-indices $\alpha,\beta$, and any compact set $\mathcal{K}\subset U$, there exist constants $C_{\alpha,\beta,\mathcal{K}}$ for which
\begin{equation}
  \label{H}
|(\gd ^\alpha_\xi\gd ^\beta_x a)(x,\xi)| \leq C_{\alpha,\beta,\mathcal{K}} \eklm{\xi}^{l-|\alpha|}, \qquad x \in \mathcal{K},  \quad \xi \in \R^n,
\end{equation}
where $\eklm{\xi}$ stands for $(1+|\xi|^2)^{1/2}$, and $|\alpha|=\alpha_1+\dots +\alpha_n$. We further put $\Sym^{-\infty}(U\times \R^n)=\bigcap _{l \in \R} \Sym^l(U\times \R^n)$. Note that, in general, the constants $C_{\alpha,\beta,K}$ also depend on $a(x,\xi)$. For any such $a(x,\xi)$ one then defines the continuous linear operator
\begin{displaymath}
  A:\CT(U) \longrightarrow \Cinft(U)
\end{displaymath}\large\Large\normalsize
by the formula
\begin{equation}
  \label{I}
Au(x)=\int e^{ix \cdot \xi} a(x,\xi) \hat u(\xi) \dbar \xi,
\end{equation}
where $\hat u$ denotes the Fourier transform of $u$, and $\dbar \xi=(2\pi)^{-n} \d \xi$. \footnote{Here and in what follows we  use the convention that, if not specified otherwise, integration is to be performed 
over Euclidean space.} An operator $A$ of this form is called a \emph{pseudodifferential operator of order l}, and we denote the class of all such operators for which $a(x,\xi) \in \Sym^l(U\times \R^n)$  by $\L^l(U)$. The set $\L^{-\infty}(U)=\bigcap_{l\in \R} \L^l (U)$ consists of all operators with smooth kernel. They are called \emph{smooth operators}. By inserting  in \eqref{I} the definition of $\hat u$, we obtain for $Au$ the expression
\begin{equation}
  \label{II}
Au(x)=\int \int  e^{i(x-y) \cdot \xi} a(x,\xi)  u(y) \d y \,\dbar \xi, 
\end{equation}
which has a suitable regularization as an oscillatory integral. The Schwartz kernel of $A$ is a distribution $K_A\in \D '(U\times U)$ which  is given the oscillatory integral
\begin{equation}
  \label{III}
K_A(x,y)=\int e^{i(x-y)\cdot \xi} a(x,\xi) \,\dbar \xi.
\end{equation}
It is a smooth function off the diagonal in $U\times U$.
Consider next a $n$-dimensional paracompact $\Cinft$ manifold ${\bf X}$, and let  $\{(\kappa_\gamma, \widetilde U^\gamma)\}$ be an atlas for ${\bf X}$. Then a linear operator
\begin{equation}
\label{IIIa}
  A:\CT({\bf X}) \longrightarrow \Cinft({\bf X})
\end{equation}
is called a \emph{pseudodifferential operator on ${\bf X}$ of order $l$} if for each chart diffeomorphism $\kappa_\gamma:\widetilde U^\gamma \rightarrow U^\gamma= \kappa_\gamma(\widetilde U^\gamma)$,  the operator $A^{\gamma} u = [A_{|\widetilde U^\gamma} ( u\circ \kappa_{\gamma})] \circ \kappa_{\gamma}^{-1}$ given by the diagram
\begin{displaymath}
\begin{CD} 
\CT(\widetilde U^{\gamma})       @>{A_{|\widetilde U^\gamma}}>>   \Cinft(\widetilde U^\gamma)               \\
@A {\kappa_{\gamma}^\ast}AA @AA {\kappa_{\gamma}^\ast}A\\
 \CT( U^{\gamma})  @> {A^{\gamma}}>>  \Cinft( U^\gamma)       
\end{CD}
\end{displaymath}
is a pseudodifferential operator on $U^\gamma$ of order $l$, and its kernel $K_A$ is smooth off the diagonal. In this case we write $A \in \L^l({\bf X})$. Note that, since the $\widetilde U^\gamma$ are not necessarily connected, we can choose them in such a way that ${\bf X}\times {\bf X}$ is covered by the open sets $\widetilde U^\gamma \times \widetilde U^\gamma$. Therefore the condition that $K_A$ is smooth off the diagonal can be dropped. 
Now, in general, if ${\bf X}$ and ${\bf Y}$ are two smooth manifolds, and 
\begin{equation*}
  A: \CT({\bf X}) \longrightarrow \Cinft({\bf Y}) \subset \D'({\bf Y})
\end{equation*}
is a continuous linear operator, where $\D'({\bf Y})=(\CT({\bf Y},\Omega))'$ and $\Omega=|\Lambda^n({\bf Y})|$ is the density bundle on ${\bf Y}$, its Schwartz kernel is given by the distribution section $K_A \in \D'({\bf Y} \times {\bf X}, {\bf 1} \boxtimes \Omega_{{\bf X}})$, where  $\D'({\bf Y}\times {\bf X} ,1 \boxtimes \Omega_{{\bf X}}) = (\CT({\bf Y} \times {\bf X}, ({{\bf 1}} \boxtimes \Omega_{{\bf X}})^\ast \otimes \Omega_{{\bf Y}\times {\bf X}}))'$. Observe that  $\CT ({\bf Y}, \Omega_{{\bf Y}}) \otimes \Cinft ({\bf X}) \simeq \Cinft ( {\bf Y} \times {\bf X}, ({\bf 1} \boxtimes \Omega_{{\bf X}})^\ast \otimes\Omega_{{\bf Y} \times {\bf X}})$. 
In case that  ${\bf X}={\bf Y}$  and $A\in \L^l({\bf X})$, $A$ is given locally by the operators $A^{\gamma}$, which can be written in the form 
  \begin{equation*}
    A^{\gamma}u(x) = \int \int e^{i(x-y) \cdot \xi} a ^{\gamma}(x,\xi) u(y) \d y \dbar \xi,
  \end{equation*}
where $u \in \CT(U^{\gamma})$, $x \in U^\gamma$, and $a^{\gamma}(x,\xi) \in {\rm{S}}^l(U^\gamma, \R^n)$. The kernel of $A$ is then determined by the kernels $K_{A^{\gamma}} \in \D'(U^\gamma \times U ^{\gamma})$. For $l < -\dim {\bf X}$, they are continuous, and given by absolutely convergent integrals. 
In this case, their restrictions to the respective diagonals in $U^\gamma \times U^\gamma$ define continuous functions
\begin{equation*}
  k^\gamma(m)= K_{A^{\gamma}} (\kappa_\gamma (m),\kappa _\gamma (m)), \qquad m \in \widetilde U^\gamma,
\end{equation*}
which, for $m \in  \widetilde U^{\gamma_1} \cap \widetilde U^{\gamma_2}$, satisfy the relations $ k^{\gamma_2}(m) =| \det (\kappa_{\gamma_1} \circ \kappa ^{-1}_{\gamma_2})' | \circ \kappa_{\gamma_2}(m) k^{\gamma_1}(m)$, and therefore define a density $k \in C ({\bf X},\Omega)$ on $\Delta_{{\bf X}}\times {\bf X} \simeq {\bf X}$.   
If ${\bf X}$ is compact, this density can be integrated, yielding the trace of the operator $A$, 
\bq
\label{eq:trace}
\tr A=\int _{{\bf X}} k=\sum_\gamma \int_{U^\gamma} (\alpha_\gamma \circ \kappa_\gamma^{-1}) (x) \, K_{A^\gamma}(x,x)  \d x,
\eq
where $\{\alpha_\gamma\}$ denotes a partition of unity subordinated to the atlas $\{(\kappa_\gamma, \widetilde U^\gamma)\}$, and $dx$ is Lebesgue measure in $\R^n$. 

{\bf{Totally characteristic pseudodifferential operators.}} We introduce now a special class of pseudodifferential operators associated in a natural way to a $\Cinft$ manifold ${\bf X}$ with boundary $\gd {\bf X}$.  Our main reference 
will be \cite{melrose} in this case. Let $\Cinft({\bf X})$ be the space of functions on ${\bf X}$ which are $\Cinft$ up to the boundary, and $\Cinftv({\bf X})$ the subspace of functions vanishing to all orders on $\gd {\bf X}$. The standard spaces of distributions over ${\bf X}$ are 
\begin{equation*}
  \D'({\bf X})= (\CTv({\bf X},\Omega))', \qquad \dot \D({\bf X})' =(\CT({\bf X},\Omega))', 
\end{equation*}
the first being the space of \emph{extendible distributions}, whereas the second is the space of \emph{distributions supported by ${\bf X}$}. Consider now the translated partial Fourier transform of a symbol $a(x,\xi) \in \Sym^l(\R^n\times \R^n)$,
\begin{equation*}
  Ma(x,\xi';t)=\int e^{i(1-t)\xi_1} a(x,\xi_1,\xi') d\xi_1,
\end{equation*}
where we wrote $\xi=(\xi_1,\xi')$. $Ma(x,\xi';t)$ is $\Cinft$ away from $t=1$, and one says that $a(x,\xi)$ is \emph{lacunary} if it satisfies the  condition\begin{equation}
\label{V}
  Ma(x,\xi';t)=0 \qquad \text{ for } t<0.
\end{equation}
The subspace of lacunary symbols will be denoted by $\Sym^l_{la}(\R^n\times \R^n)$. Let $Z=\overline{\R^+}  \times \R^{n-1}$ be the standard manifold with boundary  with the natural coordinates $x=(x_1,x')$. In order to define on $Z$ operators of the form \eqref{II}, 
where now $a (x,\xi)=\widetilde a(x_1,x',x_1\xi_1, \xi')$ is a more general amplitude and $\widetilde a(x,\xi)$ is lacunary, one rewrites the formal adjoint of $A$ by making a singular coordinate change. Thus, for $u \in \CT(Z)$, one considers 
\begin{equation*}
  A^\ast u(y) =\int\int e^{i(y-x)\cdot \xi} \overline{a}(x,\xi) u(x) \, \d x \dbar \xi.
\end{equation*}
By putting $\lambda=x_1\xi_1$, $s=x_1/y_1$, this can be rewritten as
\begin{equation}
\label{VII}
  A^\ast u(y)=(2\pi)^{-n}\int \int \int \int e^{i(1/s-1,y'-x')\cdot (\lambda,\xi')}\overline {\widetilde a}(y_1 s, x',\lambda,\xi') u(y_1 s,x')d\lambda \frac{ds}s dx' d\xi'.
\end{equation}
According to  \cite{melrose}, Propositions 3.6 and 3.9, for every $\widetilde a \in \Sym_{la}^{-\infty}(Z\times \R^n) $, the successive integrals in \eqref{VII} converge absolutely and uniformly, thus defining  a continuous bilinear form
  \begin{equation*}
    \Sym^{-\infty}_{la}(Z\times \R^n) \times \CT(Z) \longrightarrow \Cinft(Z),
  \end{equation*}
which extends to a separately continuous form
\begin{equation*}
  \Sym^\infty_{la}(Z\times \R^n) \times \CT(Z) \longrightarrow \Cinft(Z).
\end{equation*}
If $\widetilde  a \in \Sym^\infty_{la}(Z \times \R^n)$ and $a(x,\xi)=\widetilde a(x_1,x',x_1\xi_1,\xi')$, one then defines the operator 
\begin{equation}
\label{VIII}
  A:\dot \E'(Z) \longrightarrow \dot \D'(Z), 
\end{equation}
written formally as \eqref{II}, as the adjoint of $A^\ast$. In this way, the oscillatory integral \eqref{II} is identified with a separately continuous bilinear mapping
\begin{equation*}
  \Sym^\infty_{la}(Z\times \R^n) \times \dot \E'(Z) \longrightarrow \dot \D'(Z).
\end{equation*}
 The space $\L^l_b(Z)$ of \emph{totally characteristic pseudodifferential operators on $Z$ of order $l$} consists of those continuous linear maps \eqref{VIII} such that for any $u,v \in \CT(Z)$, $v Au$ is of the form \eqref{II} with $a(x,\xi)=\widetilde{a}(x_1,x',x_1\xi_1,\xi ')$ and $\widetilde a(x,\xi)\in \Sym^l_{la}(Z\times \R^n)$. Similarly, 
a continuous linear map \eqref{IIIa} on a smooth manifold $\bf X$ with boundary $\gd \bf X$ is said to be an element of the space $\L^l_{b}({\bf X})$ of \emph{totally characteristic pseudodifferential operators on ${\bf X} $ of order $l$}, if for a given atlas $(\kappa_\gamma,\widetilde U^\gamma)$ the operators $A^\gamma u=[A_{|\widetilde U^\gamma} (u \circ \kappa_\gamma)] \circ \kappa^{-1}_\gamma$ are elements of $\L^l_b(Z)$,  
where the $\widetilde U^\gamma$ are coordinate patches isomorphic to subsets in $Z$.

In an analogous way, it is possible to introduce the concept of a totally characteristic pseudodifferential operator on a manifold with corners. As the standard manifold with corners, consider
\bqn 
\R^{n,k}=[0,\infty)^k \times \R^{n-k}, \qquad 0 \leq k \leq n,
\eqn 
with coordinates $x=(x_1,\dots, x_k, x')$. A  \emph{totally characteristic pseudodifferential operator on $\R^{n,k}$ of order $l$} is locally given by an oscillatory integral \eqref{II} with $a(x,\xi) =\widetilde a (x, x_1 \xi_1,\dots, x_k \xi_k, \xi')$, where now $\widetilde a(x,\xi)$ is a symbol of order $l$ that satisfies the lacunary condition for each of the coordinates $x_1,\dots, x_k$, i.e.
\bqn 
\int e^{i(1-t)\xi_j} a(x, \xi) \d \xi_j =0 \qquad \text{for } t<0 \text{ and } 1\leq j \leq k.
\eqn
In this case we write $\widetilde a(x,\xi) \in \Sym^l_{la}(\R^{n,k}\times \R^n)$.
A continuous linear map \eqref{IIIa} on a smooth manifold $\bf X$ with  corners is then said to be an element of the space $\L^l_{b}({\bf X})$ of \emph{totally characteristic pseudodifferential operators on ${\bf X} $ of order $l$}, if for a given atlas $(\kappa_\gamma,\widetilde U^\gamma)$ the operators $A^\gamma u=[A_{|\widetilde U^\gamma} (u \circ \kappa_\gamma)] \circ \kappa^{-1}_\gamma$ are   totally characteristic pseudodifferential operator on $\R^{n,k}$ of order $l$,  where the $\widetilde U^\gamma$ are coordinate patches isomorphic to subsets in $\R^{n,k}$. For an extensive treatment,  we refer the reader to \cite{loya98}.

\section{Invariant integral operators}
\label{Sec:4}

Let $\widetilde \X$ be the Oshima compactification of a Riemannian symmetric space $\X\simeq G/K$ of non-compact type. As was already explained, $G$ acts analytically on $\widetilde \X$, and the orbital decomposition is of normal crossing type. Consider  the Banach space $\mathrm{C}(\widetilde \X)$ of continuous, complex valued functions on $\widetilde \X$, equipped with the supremum norm, and let  $(\pi,\mathrm{C}(\widetilde \X))$ be the corresponding continuous  regular representation of $G$ given by
\bqn 
\pi(g) \phi(\tilde x) =\phi(g^{-1} \cdot \tilde x), \qquad \phi \in \mathrm{C}(\widetilde \X).
\eqn
  The representation of the universal enveloping algebra $\U$ of the complexification $\g_\C$ of $\g$ on the space of differentiable vectors $\mathrm{C}(\widetilde \X)_\infty$ will be denoted by $d\pi$. We will also consider the regular representation  of $G$ on $\Cinft(\widetilde \X)$ which, equipped with the topology of uniform convergence on compact subsets, becomes a Fr\'{e}chet space. This representation will be denoted by $\pi$ as well. Let $(L,\Cinft (G))$ be the left regular representation of $G$. With respect to the left-invariant metric on $G$ given by $\langle,\rangle_{\theta}$, we define  $d(g,h)$ as  the distance between two points $g,h \in G$, and set $|g|=d(g,e)$, where $e$ is the identity element of $G$.  A function $f$ on $G$ is \emph{at most of exponential growth}, if there exists a $\kappa>0$ such that $|f(g)| \leq C e^{\kappa|g|}$ for some constant $C>0$.
As before,  denote  a Haar measure on $G$ by $d_{G}$. Consider next the space   $\S(G)$ of rapidly decreasing functions on $G$ introduced in   \cite{ramacher06}. 
\begin{definition}
The space of rapidly decreasing functions on $G$, denoted by $\S(G)$, is given by all functions $f \in \Cinft(G)$ satisfying the following conditions:
\begin{itemize}
\item[i)] For every $\kappa \geq 0$, and $X \in \U$, there exists a constant C such that 
	$$|dL(X)f(g)| \leq C e^{-\kappa |g|} ;$$ 
\item[ii)] for every $\kappa \geq 0$, and $X \in \U$, one has $dL(X)f \in \L^1(G,e^{\kappa|g|}d_G)$.
\end{itemize}
\end{definition}
For later purposes, let us recall  the following integration formulas.
\begin{proposition}
\label{prop:A}
Let $f_1\in\S(G)$, and assume that $f_2 \in \Cinft(G)$, together with all its derivatives, is at most of exponential growth.  Let $X_1, \dots, X_d$ be a basis of $\g$, and  for  $X^\gamma=X^{\gamma_1}_{i_1}\dots X^{\gamma_r}_{i_r}$ write $X^{\tilde \gamma}=X^{\gamma_r}_{i_r}\dots X^{\gamma_1}_{i_1}$, where $\gamma$ is an  arbitrary multi-index.  Then
\begin{equation*}
\int_{G}f_1(g) dL(X^\gamma) f_2(g) d_{G}(g)=(-1)^{|\gamma|} \int _{G}
dL(X^{\tilde \gamma}) f_1(g) f_2(g) d_{G}(g).
\end{equation*}
\end{proposition}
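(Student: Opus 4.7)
The plan is to reduce everything to the case of a single vector field $X \in \g$ and then to iterate. For $|\gamma|=1$, let $X = X_i$ and consider the one-parameter family
\begin{equation*}
I(t) := \int_{G} f_1(\exp(tX) g) \, f_2(g) \, d_G(g), \qquad t \in \R.
\end{equation*}
By left-invariance of the Haar measure $d_G$ (recall that $G$ is semisimple, hence unimodular), the substitution $g \mapsto \exp(-tX) g$ gives
\begin{equation*}
I(t) = \int_{G} f_1(g) \, f_2(\exp(-tX) g) \, d_G(g).
\end{equation*}
I would then differentiate both expressions at $t=0$ and equate the results. Since $\frac{d}{dt}\big|_{t=0} f(\exp(tX) g) = -\, dL(X) f(g)$ for the left regular representation convention $(L(g_0)f)(g) = f(g_0^{-1}g)$, this immediately yields the base case
\begin{equation*}
\int_G f_1(g)\, dL(X) f_2(g) \, d_G(g) = -\int_G dL(X) f_1(g)\, f_2(g)\, d_G(g).
\end{equation*}

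The justification for interchanging $\frac{d}{dt}$ and $\int_G$ is where the defining conditions of $\S(G)$ and the hypothesis that $f_2$ and all its derivatives have at most exponential growth enter. For $t$ in a bounded interval, say $|t|\leq 1$, the triangle inequality gives $|\exp(tX) g| \leq |g| + |X|$, so the exponential growth of $f_2$ and the rapid decrease of $f_1$ and $dL(X)f_1$ together produce an $L^1$ dominating function of the form $C\, e^{-\kappa |g|} e^{\kappa'|g|}$ with $\kappa$ as large as needed; property (ii) in the definition of $\S(G)$ ensures the resulting dominated convergence argument is legitimate.

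The general case proceeds by induction on $|\gamma|$. Writing $X^\gamma = X_{i_1}^{\gamma_1} X_{i_2}^{\gamma_2}\cdots X_{i_r}^{\gamma_r}$ as a product of generators, I would peel off the outermost factor using the base case (applied to $f_1$ and $dL(X_{i_2}^{\gamma_2}\cdots X_{i_r}^{\gamma_r}) f_2$, which is again smooth and of at most exponential growth in light of Leibniz), thereby moving one generator at a time from the right-acting string on $f_2$ to the left end of the string acting on $f_1$. Each transfer contributes a minus sign and prepends the transferred generator, which is exactly the combinatorial mechanism producing the reversed word $X^{\tilde\gamma}$ and the overall factor $(-1)^{|\gamma|}$.

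The main (mild) obstacle is keeping track of this order reversal and verifying at each inductive step that the intermediate factors still belong to the respective function classes so that the dominated-convergence justification can be reused. The former is a formal bookkeeping matter; the latter is automatic, since $dL(X^\beta) f_1 \in \S(G)$ for every multi-index $\beta$ by property (i) applied to arbitrary monomials in $\U$, and derivatives of $f_2$ remain of at most exponential growth by hypothesis.
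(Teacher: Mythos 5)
Your proof is correct. Since the paper only cites Proposition 1 of \cite{ramacher06} for this statement and gives no in-text argument, there is nothing to compare against line by line, but your translate-and-differentiate scheme for a single $X\in\g$ — exploiting left invariance of $d_G$ via the substitution $g\mapsto\exp(-tX)g$ — followed by an induction that peels off one generator at a time is the standard argument, and the bookkeeping correctly produces both the reversed word $X^{\tilde\gamma}$ (each transferred generator is prepended to the string acting on $f_1$) and the overall sign $(-1)^{|\gamma|}$.

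Two cosmetic remarks. First, in the dominated-convergence justification you want the \emph{lower} estimate $|\exp(tX)g|\ge |g|-|\exp(tX)|$ (so that $e^{-\kappa|\exp(tX)g|}\le e^{\kappa|\exp(tX)|}e^{-\kappa|g|}$), not the upper bound $|\exp(tX)g|\le |g|+|X|$ you write; the conclusion you draw — a dominating function of the form $Ce^{-(\kappa-\kappa')|g|}$ with $\kappa$ as large as one likes, integrable by property (ii) of $\S(G)$ — is nonetheless right. Second, the appeal to unimodularity is superfluous: the substitution $g\mapsto\exp(-tX)g$ is a left translation and therefore preserves any left Haar measure; unimodularity would only be needed if a right translation were involved.
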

\begin{proof}
See \cite{ramacher06}, Proposition 1.
\end{proof}
Next, we associate to every $f\in \S(G)$ and $\phi \in \mathrm{C}(\widetilde \X)$ the element $\int _{G} f(g)  
\pi(g) \phi \d_{G}(g)\in \mathrm{C}(\widetilde \X)$. It  is defined as a Bochner integral,  and the  continuous linear operator  on $\mathrm{C}(\widetilde \X )$ obtained this way is denoted by \eqref{eq:1}. Its restriction to $\Cinft(\widetilde \X)$ induces a continuous linear operator            
\begin{equation*}
\pi(f):\Cinft(\widetilde \X) \longrightarrow \Cinft(\widetilde \X) \subset \D'(\widetilde \X),
\end{equation*}
with Schwartz kernel given by the distribution section  $\mathcal{K}_f \in  \D'(\widetilde \X \times \widetilde \X, {{\bf 1}} \boxtimes \Omega_{\widetilde \X})$. The  properties of the Schwartz kernel $\mathcal{K}_f$ will depend   on the analytic properties of $f$, as well as the orbit structure of the underlying $G$-action, and our main effort  will be directed towards the elucidation of the structure of $\mathcal{K}_f$. For this, let us  consider the orbital decomposition \eqref{eq:decomp} of $\widetilde \X$, and remark that the restriction of $\pi(f) \phi$ to any of the connected components isomorphic to $G/P_\Theta(K)$ depends only on the restriction of $\phi\in \mathrm{C}(\widetilde \X)$ to that component, so that we obtain the continuous linear operators
\begin{equation*}
\pi(f)_{|\widetilde \X_\Theta}:\CT(\widetilde \X_\Theta) \longrightarrow \Cinft(\widetilde \X_\Theta),
\end{equation*}
where $\widetilde \X_\Theta$ denotes a component in $\widetilde \X$ isomorphic to $G/P_\Theta(K)$. Let us now assume that $\Theta=\Delta$, so that  $P_\Theta(K)=K$. Since $G$ acts transitively on $\widetilde \X_{\Delta}$ one deduces that $\pi(f)_{|\widetilde \X_\Delta} \in \L^{-\infty}(\widetilde \X_{\Delta})$, c.p.   \cite{ramacher06},  Section 4.  The main goal of this section is to prove  that the restrictions of the operators $\pi(f)$ to the manifolds with corners $\overline{\widetilde \X_{\Delta}}$  are  totally characteristic pseudodifferential operators of class $\L^{-\infty}_b$.

Let $\mklm{(\widetilde U_{m_w}, \phi_{m_w}^{-1})}_{w\in W}$ be the finite  atlas on the Oshima compactification $\widetilde \X$ defined earlier. For each $\tilde x \in \widetilde \X$, let $\widetilde W_{\tilde x}$ be an open neighborhood of $\tilde x$ contained in some $\widetilde U_{m_w}$  such that $\mklm{h \in G: h \widetilde W_{\tilde x} \subset \widetilde {U}_{m_w}}$ 
acts transitively on the $G$-orbits of $\widetilde{W}_{\tilde x}$, c.p. \cite{ramacher06}, Section 6. We obtain a finite atlas $\mklm{(\widetilde{W}_\gamma, \phi^{-1}_{m_{w_\gamma}})}_{\gamma \in I}$ of $\widetilde \X$ satisfying the following properties:
\begin{itemize}
\item[i)] For each $\widetilde{W}_\gamma$, there exist open sets $V_\gamma \subset V_\gamma^1 \subset G$, stable under inverse, that act transitively on the $G$-orbits of $\widetilde{W}_\gamma$;
\item[ii)] For all $\gamma \in I$ one has $V_\gamma^1 \cdot \widetilde{W}_\gamma \subset \widetilde{U}_{m_{w_\gamma}}$ for some $m_{w_\gamma}\in M^*$.
\end{itemize}
To simplify notation, we shall write  $\phi_\gamma$ instead of $ \phi_{m_{w_\gamma}}$. 
 Consider now the localization of the operators $\pi(f)$ with respect to the finite atlas $\mklm{(\widetilde{W}_\gamma, \phi^{-1}_\gamma)}_{\gamma \in I}$  given by 
 \bqn 
 A_{f}^{\gamma} u=[\pi(f)_{|\widetilde W_{\gamma}}(u\circ \phi_{\gamma}^{-1})]\circ \phi_{\gamma},  \qquad u\in\CT(W_{{\gamma}}), \, W_{\gamma}=\phi^{-1}_{\gamma}(\widetilde W_\gamma),
 \eqn 
  see  Section \ref{sec:PDO}. Writing $\phi_{\gamma}^{g}= \phi_{\gamma}^{-1}\circ g^{-1}\circ \phi_{\gamma}$ and $x=(n,t)\in {W}_\gamma$ we obtain
\begin{equation*}
A_{f}^{\gamma} u(x)=\int_{G}f(g)\pi(g)(u\circ \phi_{\gamma}^{-1})(\phi_{\gamma}(x))dg=\int_{G}f(g)(u\circ \phi_{\gamma}^{g})(x)dg.
\end{equation*}
 Since we can restrict the domain of integration to $V_\gamma$, the latter integral can be rewritten as
\begin{equation*}
A_{f}^{\gamma} u(x)=\int_{G}c_{\gamma}(g) f(g)(u\circ \phi_{\gamma}^{g})(x)dg,
\end{equation*}
where $c_{\gamma}$ is a smooth bounded function on $G$ with support in $V_\gamma^1$ such that $c_{\gamma}\equiv 1$ on $V_\gamma$. Define next
\begin{equation}
\hat{f}_\gamma(x,\xi)=\int_{G}e^{i\phi_{\gamma}^{g}(x)\cdot\xi} c_{\gamma}(g)f(g) dg, \qquad a_{f}^{\gamma}(x,\xi)=e^{-ix\cdot \xi}\hat{f}_\gamma(x,\xi).
\end{equation}
Differentiating under the integral we see that $\hat{f}_\gamma(x,\xi),a_{f}^{\gamma}(x,\xi) \in\Cinft(W_{{\gamma}} \times \R^{k+l})$.
Let us next state the following
\begin{lemma}
\label{lemma:expansion}
For any $\tilde x=\phi_{\gamma} (n,t) \in \widetilde{W}_\gamma$ and $g\in V_\gamma^1$ we have the power series expansion
\bq
\label{eq:expansion}
t_j(g\cdot \tilde x)=\displaystyle\sum_{\substack{\alpha,\beta \\ \beta_{j} \neq 0}} c^{j}_{\alpha,\beta}(g)n^{\alpha}(\tilde x)t^{\beta}(\tilde x), \qquad  j=1,\dots, l,
\eq
where the coefficients $c^{j}_{\alpha,\beta}(g)$ depend real-analytically on $g$, and $\alpha, \beta$ are multi-indices.  
\end{lemma}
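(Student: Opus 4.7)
The plan is to exploit two facts already established in Theorem \ref{Thm.1}: the joint real-analyticity of the map $(g,\tilde x)\mapsto g\cdot \tilde x$, and the fact that this action preserves the normal-crossing orbital decomposition, so that the vanishing locus of each coordinate $t_j$ is $G$-invariant.

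First, for a fixed $j\in\{1,\dots,l\}$, I would introduce the auxiliary function $F_j(g,n,t):=t_j(g\cdot \phi_\gamma(n,t))$. By property (ii) of the atlas this is well-defined on $V_\gamma^1\times W_\gamma$, and by Theorem \ref{Thm.1}~(3) together with the real-analyticity of $\phi_\gamma$, it is jointly real-analytic in $(g,n,t)$. Expanding in Taylor series in $(n,t)$ around the origin, on some polydisk one has
\begin{equation*}
F_j(g,n,t)=\sum_{\alpha,\beta} c^j_{\alpha,\beta}(g)\,n^\alpha t^\beta,
\end{equation*}
whose coefficients, being iterated partial derivatives in $(n,t)$ of a jointly analytic function evaluated at $(n,t)=(0,0)$, depend real-analytically on $g\in V_\gamma^1$.

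The heart of the argument is to show $c^j_{\alpha,\beta}(g)\equiv 0$ whenever $\beta_j=0$. For this I would invoke the orbit structure from Theorem \ref{Thm.1}~(3): the $G$-orbit of $\pi(\hat x)$ on $\widetilde\X$ is determined by the sign vector $\sgn \hat x$. Consequently, if $\hat x=(m_{w_\gamma},n,t)$ satisfies $t_j=0$, then for any $g\in V_\gamma^1$ the image $g\cdot\pi(\hat x)$ lies in the same orbit, and writing $g\cdot\tilde x=\phi_\gamma(n'',t'')$ one must still have $t''_j=0$. Thus $F_j(g,n,t)$ vanishes identically on the hypersurface $\{t_j=0\}$, and equating Taylor coefficients forces $c^j_{\alpha,\beta}(g)=0$ for all multi-indices with $\beta_j=0$, as claimed.

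The main technical subtlety I anticipate is controlling the domain of validity of the expansion: \emph{a priori} the Taylor series converges only on some polydisk about $(0,0)$ in $(n,t)$, while the statement claims the expansion holds at every $(n,t)\in W_\gamma$. I would resolve this either by shrinking $\widetilde W_\gamma$ so that $W_\gamma$ lies inside such a polydisk (which is permissible, since the atlas $\{(\widetilde W_\gamma,\phi_\gamma^{-1})\}$ is constructed for our analytic convenience), or, alternatively, by using Taylor expansion with integral remainder in the single variable $t_j$ to establish the global factorization $F_j=t_j\cdot G_j$ on all of $V_\gamma^1\times W_\gamma$ with $G_j$ jointly analytic, and only then expanding $G_j$ around the origin. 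Either reading makes the statement precise and suffices for the microlocal analysis of $\hat f_\gamma$ that is to follow.
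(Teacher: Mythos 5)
Your proposal follows essentially the same route as the paper's proof: both rest on the orbit-invariance of the sign vector from Theorem~\ref{Thm.1}~(3), which forces $t_j(g\cdot\tilde x)=0$ exactly when $t_j(\tilde x)=0$, and then read off the vanishing of the Taylor coefficients with $\beta_j=0$ from the real-analytic power series expansion. Your additional remark about the domain of validity of the expansion (resolved by shrinking $\widetilde W_\gamma$, or by a Hadamard-type factorization $F_j=t_j G_j$) addresses a point the paper leaves implicit and is a sensible precaution.
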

\begin{proof}
By Theorem \ref{Thm.1}, a $G$-orbit in  $\widetilde \X$ is locally determined by the signature of any of its elements. In particular, for $\tilde x \in \widetilde W_\gamma$, $ g\in V_\gamma^1$ we have $\sgn{t_j(g\cdot \tilde x)}=\sgn{t_j(\tilde x)}$ for all $j=1,\dots, l$. Hence, $t_j(g\cdot \tilde x)=0$ if and only if $t_j(\tilde x)=0$. Now, due to the analyticity of the coordinates $(\phi_{\gamma}, \widetilde W_\gamma)$,  there is a power series expansion 
\begin{equation*}
t_j(g\cdot \tilde x)=\displaystyle\sum_{\alpha,\beta } c^{j}_{\alpha,\beta}(g)n^{\alpha}(\tilde x)t^{\beta}(\tilde x), \qquad \tilde x\in \widetilde{W}_\gamma, \,  g\in V_\gamma^1,
\end{equation*} 
for every  $j=1,\dots,l$, which can be rewritten as
\begin{equation}\label{eqn.10}
t_j(g\cdot \tilde x)=\displaystyle\sum_{\substack{\alpha,\beta \\ \beta_{j} \neq 0}} c^{j}_{\alpha,\beta}(g)n^{\alpha}(\tilde x)t^{\beta}(\tilde x) + \displaystyle\sum_{\substack{\alpha,\beta \\ \beta_{j}= 0}} c^{j}_{\alpha,\beta}(g)n^{\alpha}(\tilde x)t^{\beta}(\tilde x).
\end{equation}
Suppose $t_j(\tilde x)=0$. Then the  first summand of the last equation must vanish, as in each term of the summation a non-zero power of $t_j(\tilde x)$ occurs. Also, $t_j(g\cdot \tilde x) =0$. Therefore \eqref{eqn.10} implies that the second summand must vanish, too.  But  the latter is independent of $t_j$. So we conclude  
\begin{equation*}
 \displaystyle\sum_{\substack{\alpha,\beta \\ \beta_{j}= 0}} c^{j}_{\alpha,\beta}(g)n^{\alpha}(\tilde x)t^{\beta}(\tilde x)\equiv{0}
 \end{equation*}
for all  $\tilde x \in \widetilde{W}_\gamma$, $ g\in V_\gamma^1$, and the assertion follows. 
\end{proof}
From Lemma \ref{lemma:expansion} we deduce that 
\bq
\label{eq:powers of t}
t_j(g\cdot \tilde x)=t_j^{q_j}(\tilde x) \chi_j(g,\tilde x), \qquad \tilde x \in \widetilde{W}_\gamma, \, g \in V_\gamma^1,
\eq
 where $\chi_j(g,\tilde x)$ is a function that is real-analytic in $g$ and in $\tilde x$, and $q_j$ is the lowest power of $t_j$ that occurs in the expansion \eqref{eq:expansion}, so that
 \bq
 \label{eq:notzero}
 \chi_j(g,\tilde x) \not= 0 \qquad \forall \, \tilde x \in \widetilde W_\gamma, \, g \in V_\gamma^1.
 \eq
 Indeed, $\chi_j(g,\tilde x)$ can only vanish if $t_j(\tilde x)=0$. But if this were the case, $q_j$ would not be the lowest power, and we obtain \eqref{eq:powers of t}. Furthermore, since $t_j(g \cdot \tilde x) = t_j(\tilde x)$ for $g=e$, one has $q_1=\dots= q_l$. 
  Thus, for $\tilde x=\phi_{\gamma} (x) \in \widetilde{W}_\gamma$, $x=(n,t)$, $g \in V_\gamma^1$, we have
\begin{equation*}
\phi^{g}_{\gamma} (x)=(n_1(g\cdot \tilde x),\dots,n_k(g\cdot \tilde x),t_1(\tilde x)\chi_{{}_1}(g,\tilde x),\dots,t_l(\tilde x)\chi_{{}_l}(g,\tilde x)).
\end{equation*}
Note that similar formulas hold for $\tilde x \in \widetilde U_{m_w}$ and $g$ sufficiently close to the identity. The following lemma  describes the $G$-action on $\widetilde \X$ as far as the $t$-coordinates are concerned.

\begin{lemma}
\label{lemma:char}
Let $X_{-\lambda,i}$ and  $H_j$ the basis elements for $\n^-$ and $\a$ introduced in Section \ref{Sec:2},  $w \in W$, and  $\tilde x \in  \widetilde U_{m_{w}}$. Then, for small $s\in \R$,
\bqn
 \chi_{{}_{j}}(\e{sH_i},\tilde x)=e^{-c_{ij}(m_{w})s},
 \eqn
  where the $c_{ij}(m_{w})$ represent the matrix coefficients of the adjoint representation of $M^\ast$ on $\a$, and  are  given by  $\Ad(m_{w}^{-1})H_i=\sum_{j=1}^lc_{ij}(m_{w})H_j$.
Furthermore, when $\tilde x =\pi(e,n,t)$,
 $$\chi_{{}_j}(\e{sX_{-\lambda,i}},\tilde x)\equiv 1 .$$ 
\end{lemma}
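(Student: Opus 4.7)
The proof reduces to two direct calculations with the equivalence relation defining $\widetilde{\X}$. For the first claim, write $\tilde{x} = \pi(m_w, n, t) \in \widetilde{U}_{m_w}$ and apply the $G$-action to get $\exp(sH_i) \cdot \tilde{x} = \pi(\exp(sH_i) m_w, n, t) = \pi(m_w A_s, n, t)$, where $A_s := m_w^{-1}\exp(sH_i) m_w = \exp(s\,\Ad(m_w^{-1})H_i) = \exp\bigl(s\sum_{j} c_{ij}(m_w) H_j\bigr) \in A$, using that $m_w \in M^*$ normalizes $\a$. To read off the new $(n', t')$-coordinates in the chart $\widetilde{U}_{m_w}$, we solve the equivalence $(m_w A_s, n, t) \sim (m_w, n', t')$: condition (a) forces $\sgn t = \sgn t'$, hence $\Theta_t = \Theta_{t'} =: \Theta$, while condition (b), after canceling $m_w$ and writing $A_s n = (A_s n A_s^{-1}) A_s$ with $\tilde n := A_s n A_s^{-1} \in N^-$ (since $A$ normalizes $N^-$), reduces to the requirement $A_s a(t) \in a(t') P_\Theta(K)$ with $n' = \tilde n$.

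We first treat the generic case $\Theta = \Delta$, i.e., $\tilde{x}$ in the open dense subset of $\widetilde{U}_{m_w}$ where every $t_j \neq 0$. Then $P_\Delta(K) = K$, and since $A \cap K = \{e\}$, the condition degenerates to the equality $A_s\, a(t) = a(t')$ in $A$. Inserting $a(t) = \exp(-\sum_j H_j \log|t_j|)$ and $A_s = \exp(s\sum_j c_{ij}(m_w)H_j)$, and equating exponents in $\a$, one obtains $|t_j'| = |t_j|\, \e{-c_{ij}(m_w) s}$; together with $\sgn t_j = \sgn t_j'$ this gives $t_j' = t_j\, \e{-c_{ij}(m_w) s}$. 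Hence $\chi_j(\exp(sH_i), \tilde x) = t_j'/t_j = \e{-c_{ij}(m_w) s}$ on this dense open set. Because $\chi_j(g, \cdot)$ is real-analytic on the connected coordinate patch $\widetilde{U}_{m_w}$ (by the power-series expansion in Lemma \ref{lemma:expansion}) and constant on an open dense subset, this identity extends to all of $\widetilde{U}_{m_w}$.

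For the second claim, observe that $X_{-\lambda,i} \in \n^-$ implies $\exp(sX_{-\lambda,i}) \in N^-$, so $\exp(sX_{-\lambda,i})\cdot n \in N^-$ whenever $n \in N^-$. Taking $\tilde x = \pi(e, n, t)$, we have $\exp(sX_{-\lambda,i}) \cdot \tilde x = \pi(\exp(sX_{-\lambda,i}), n, t) = \pi(e, \exp(sX_{-\lambda,i})\, n, t)$, since the equivalence $(\exp(sX_{-\lambda,i}), n, t) \sim (e, \exp(sX_{-\lambda,i})\, n, t)$ is witnessed by $p = e \in P_\Theta(K)$ in condition (b). In particular the $t$-coordinates are left unchanged, giving $\chi_j(\exp(sX_{-\lambda,i}), \tilde x) \equiv 1$.

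The main subtlety would be to control the decomposition $\a = \a_\Theta \oplus \a(\Theta)$ for non-generic $\Theta$, where the splitting of $A_s a(t)$ modulo $P_\Theta(K)$ is not given directly by the basis $\{H_j\}$; this is precisely what is bypassed above by restricting the explicit computation to $\Theta = \Delta$ and invoking the real-analyticity of $\chi_j$ in $\tilde{x}$.
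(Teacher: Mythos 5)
Your proof is correct and follows essentially the same route as the paper for the first claim: conjugate $\exp(sH_i)$ past $m_w$ using $m_w \in M^*$ to land in $A$, slide the resulting $A$-factor past $N^-$ by normality, compute the shift in the $A$-coordinate explicitly on the open dense orbit $\Theta=\Delta$ (where $P_\Delta(K)=K$ and $A \cap K = \{e\}$ gives the clean identity $A_s\,a(t)=a(t')$), and then extend to all of $\widetilde U_{m_w}$ by real-analyticity of $\chi_j$. The minor difference is cosmetic: you phrase the computation through the equivalence relation on $\hat{\X}$ rather than the $G$-action on $G/K$ as in the paper, but the content is identical.

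For the second claim, your argument is actually more direct than the paper's. The paper restricts to $\tilde x \in \widetilde U_e \cap \widetilde\X_\Delta$, decomposes $\exp(sX_{-\lambda,i})naK = n\exp N_3^-(s)aK$, reads off that $a$ is unchanged, and then invokes analyticity to cover general $\tilde x$. You instead observe that $\pi(\exp(sX_{-\lambda,i}),n,t)=\pi(e,\exp(sX_{-\lambda,i})n,t)$ holds trivially from the definition of $\sim$ (the two representatives are literally equal in $G$ before quotienting by $P_{\Theta}(K)$), so the $t$-coordinates are unchanged for \emph{every} $\tilde x \in \widetilde U_e$ with no need for the generic restriction or the analytic-continuation step. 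This buys nothing new in terms of the result but is a genuinely cleaner route to the same conclusion.
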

\begin{proof}
Let $Y \in \g$. As we saw in  the proof of Lemma \ref{lemma:fundvec},  the action of the one-parameter group $\exp(sY)$ on the  homogeneous space $G/P_{\Theta}(K)$ is given by  equation
 \eqref{eq:action}, where $N_3^-(s) \in\n^-,A_1(s)\in\a, A_2(s) \in\a(\Theta)$. Denote  the derivatives of $N_3^-(s)$, $A_1(s)$, and $A_2(s)$ at $s=0$ by $N_3^-$, $ A_1$, and $A_2$ respectively. The  analyticity of the $G$-action  implies  that  $N_3^-(s),A_1(s),A_2(s)$ are real-analytic functions in $s$. Furthermore, from \eqref{eq:action} it is clear that $N_3^-(0)=0$, $A_1(0)+A_2(0)=0$, so that for small $s$ we have 
\begin{align*}
A_1(s)+A_2(s)&=( A_1+A_2)\, s+ \frac{1}{2}\frac{d^2}{d s^{2}}(A_1(s)+A_2(s))|_{s=0} \, s^2+\dots \\
N_3^-(s)&=N_3^- \, s+\frac{1}{2}\frac{d^2}{d s^{2}}N_3^-(s)|_{s=0} \, s^2+\dots. 
\end{align*}
Next, fix $m_{w} \in M^*$ and let $\Theta = \Delta$. The action of the one-parameter group corresponding to $H_i$  at $\tilde x =\pi(m_{w},n,t) \in  \widetilde U_{m_{w}} \cap \widetilde \X_\Delta $ is given by
\begin{align*}
\exp(sH_i)m_{w}naK&=
m_{w}\left(m_{w}^{-1}\exp(sH_i)m_{w}\right)naK= m_{w} \exp(s\Ad(m_{w}^{-1})H_i)naK.
\end{align*}
As $m_{w}$ lies in $M^*$, $\exp(s\Ad(m_{w}^{-1})H_i)$ lies in $A$. Since $A$ normalizes $N^-$, we conclude  that $\exp(s\Ad(m_{w}^{-1})H_i)n\exp(-s\Ad(m_{w}^{-1})H_i)$ belongs to $N^-$. Writing $$n^{-1}\exp(s\Ad(m_{w}^{-1})H_i)n\exp(-s\Ad(m_{w}^{-1})H_i)=\exp N_3^-(s)$$ 
 we get 
 $$\exp(sH_i)m_{w}naK=m_{w}n\exp N_3^-(s)a\exp(s\Ad(m_{w}^{-1})H_i)K.$$
 In the notation of \eqref{eq:action} we therefore obtain  $A_1(s)+A_2(s)=s\Ad(m_{w}^{-1})H_i$, and by  writing $\Ad(m_{w}^{-1})H_i=\sum_{j=1}^lc_{ij}(m_{w})H_j$ we arrive at
 \begin{align*}
 a\exp(A_1(s)+A_2(s))&=\exp\Big (\sum_{j=1}^l(c_{ij}(m_{w})s-\log t_j)H_j\Big ).
 \end{align*}
 In terms of the coordinates this shows that $t_j(\exp(sH_i)\cdot \tilde x)=t_j(\tilde x)e ^{-c_{ij}(m_{w})s}$ for $\tilde x \in \widetilde U_{m_w} \cap \widetilde \X_\Delta$, and by analyticity we obtain that  $\chi_{{}_{j}}(\e{sH_i},\tilde x)=e^{-c_{ij}(m_{w})s}$ for arbitrary $\tilde x \in \widetilde U_{m_w}$. 
 On the other hand,  let $Y=X_{-\lambda,i}$, and  $\tilde x =\phi_e(n,t)\in \widetilde U_e\cap \widetilde \X_\Delta$. Then the action corresponding to  $X_{-\lambda,i}$  at  $\tilde x $ is given by 
  \begin{align*}
 \exp(sX_{-\lambda,i})naK=n\exp N_3^-(s)aK,
  \end{align*}
  where we wrote $\exp N_3^-(s)=s \Ad(n^{-1})\exp X_{-\lambda,i}$.   In terms of the coordinates this implies that  $t_j(\exp(sX_{-\lambda,i})\cdot \tilde x)=t_j(\tilde x)$ showing that $\chi_{{}_j}(\e{sX_{-\lambda,i}},\tilde x)\equiv 1$ for $\tilde x \in \widetilde U_e \cap \widetilde \X_\Delta$, and,  by  analyticity, for general $\tilde x \in \widetilde U_e$, finishing the proof of the lemma.
\end{proof}

Let now $x=(n,t)\in W_\gamma$, and define the matrix
\begin{equation} 
T_x = \begin{pmatrix} t_1& & 0\\& \ddots &\\0& & t_l\end{pmatrix}, 
\end{equation}
 so that  for  $\tilde x=\phi_{\gamma} (x) \in \widetilde{W}_\gamma$, $\, g \in V_\gamma^1$,
\begin{equation*}
({\bf 1}_k\otimes T^{-1}_{x})(\phi^{g}_{\gamma} (x))=(x_1(g\cdot \tilde x),\dots,x_k(g\cdot \tilde x),\chi_{{}_1}(g,\tilde x),\dots,\chi_{{}_l}(g,\tilde x)),
\end{equation*}
and set
\bqn
\psi^\gamma_{\xi,x}(g)=e^{i({\bf 1}_k\otimes T^{-1}_{x})(\phi^{g}_{\gamma} (x))\cdot\xi},
\eqn
where $\xi=(\xi_1,\dots,\xi_{k+l})\in\R^{k+l}$.  Also, introduce the auxiliary symbol 
\bq
\label{eq:auxsym}
\tilde{a}_f^{\gamma}(x,\xi)=a_{f}^{\gamma}(x,({\bf 1}_k\otimes T^{-1}_{x})\xi)=e^{-i(x_1,\dots,x_k,1,\dots,1).\xi}\int_{G}\psi^\gamma_{\xi,x}(g) c_{\gamma}(g) f(g) dg.
 \eq
Clearly, $\tilde{a}_f^{\gamma}(x,\xi)\in\Cinft(W_{{\gamma}} \times \R^{k+l})$. Our next goal is to show that $\tilde{a}_f^{\gamma}(x,\xi)$ is a lacunary symbol. To do so, we shall need the following   
\begin{proposition}
\label{prop:1}
Let $(L,\Cinft (G))$ be the left regular representation of $G$. Let $X_{-\lambda,i},H_j $ be the basis elements of $\n^-$ and $\a$ introduced in Section \ref{Sec:2}, and  $(\widetilde W_\gamma, \phi_{\gamma})$ an arbitrary chart. With $x=(n,t)\in W_\gamma$, $\tilde x=\phi_{\gamma} (x) \in \widetilde{W}_\gamma$, $\, g \in V_\gamma^1$ one has
\bq
\label{eq:23}
\begin{pmatrix} dL(X_{-\lambda,1})\psi^\gamma_{\xi,x}(g)\\ \vdots\\ dL(H_l)\psi^\gamma_{\xi,x}(g)\end{pmatrix} =i\psi^\gamma_{\xi,x}(g)\Gamma(x,g)\xi,
\eq
with
\bq
\label{eq:Gamma}
\Gamma(x,g)=
\left(\begin{array}{cc}
\Gamma_1 & \Gamma_2 \\
 \Gamma_3   &\Gamma_4\\
 \end{array}\right) 
=
\left(\begin{array}{cccc}
 dL(X_{-\lambda,i})n_{j,\tilde x}(g)& \multicolumn{1}{c|}{}  & &dL(X_{-\lambda,i})\chi_j(g,\tilde x) \\
&  \multicolumn{1}{c|}{} &  &\\
\cline{1-4} &   \multicolumn{1}{c|}{}  & &\\
 dL(H_i)n_{j,\tilde x}(g) & \multicolumn{1}{c|}{} &  &dL(H_i)\chi_j(g,\tilde x)\\
 \end{array}\right)
\eq
 belonging to $\GL(l+k,\R)$, where  $n_{j,\tilde x}(g)=n_j(g\cdot \tilde x)$.
\end{proposition}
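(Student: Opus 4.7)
The first step is to derive the identity \eqref{eq:23}, which is a direct application of the chain rule. Writing $F(g) = (n_{1,\tilde x}(g),\dots,n_{k,\tilde x}(g),\chi_1(g,\tilde x),\dots,\chi_l(g,\tilde x))$ so that $\psi^\gamma_{\xi,x}(g) = e^{iF(g)\cdot\xi}$, applying $dL(X)$ yields $dL(X)\psi^\gamma_{\xi,x}(g) = i\psi^\gamma_{\xi,x}(g)\,(dL(X)F(g))\cdot\xi$ for any $X \in \g$. Stacking this identity over $X = X_{-\lambda,i}$ (rows $1,\dots,k$) and $X = H_j$ (rows $k+1,\dots,k+l$) produces \eqref{eq:23} with $\Gamma(x,g)$ the four-block matrix \eqref{eq:Gamma}.

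The main work is to establish the invertibility of $\Gamma(x,g)$. The key observation is that for any smooth function $h$ on $\widetilde{\X}$ one has $dL(X)[h(g\cdot\tilde x)] = -(X_{|\widetilde{\X}}h)(g\cdot\tilde x)$, and combined with Lemma \ref{lemma:expansion} (which by analytic continuation gives $\chi_j(g,\tilde x) = t_j(g\cdot\tilde x)/t_j(\tilde x)$), this expresses every entry of $\Gamma$ as the value of a fundamental vector field at $\tilde y := g\cdot\tilde x$ applied to a coordinate function of the chart $\widetilde U_{m_{w_\gamma}}$. Applying formula \eqref{eq:2.13} of Lemma \ref{lemma:fundvec}, the $t_j$-component of $X_{|\widetilde{\X}}$ equals $-c_j^X(m_{w_\gamma}, n(\tilde y))\,t_j(\tilde y)$, where $c_j^X$ denotes the $H_j$-coefficient in the decomposition \eqref{eq:2211} of $\Ad^{-1}(m_{w_\gamma}n(\tilde y))X$ modulo $\m$. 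The apparent singular factor $1/t_j(\tilde x)$ in the $\chi_j$-columns is then cancelled via $t_j(\tilde y) = t_j(\tilde x)\chi_j(g,\tilde x)$, yielding the analytic entries $c_j^X\chi_j$.

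Factoring out the invertible column rescaling $\mathrm{diag}(-1,\dots,-1,\chi_1(g,\tilde x),\dots,\chi_l(g,\tilde x))$ (nonzero by \eqref{eq:notzero}), the invertibility of $\Gamma(x,g)$ reduces to that of the analytic matrix $M(\tilde y)$ whose $X$-row is $(X_{|\widetilde{\X}}(n_j)(\tilde y),\,c_j^X(m_{w_\gamma},n(\tilde y)))_j$. To handle this, I would interpret $M(\tilde y)$ via Iwasawa: the $n_j$-block, after factoring out the always-invertible Jacobian of the left $\n^-$-action on $N^-$ in coordinates, represents the $\n^-$-component of $\Ad^{-1}(m_{w_\gamma}n(\tilde y))X$, while the $c_j^X$-block is its $\a$-component modulo $\m$. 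Combined, these realize, up to invertible factors, the Iwasawa projection $\g = \k \oplus (\a \oplus \n^-) \twoheadrightarrow \a \oplus \n^-$ composed with $\Ad^{-1}(m_{w_\gamma}n(\tilde y))$, restricted to $\a \oplus \n^-$. Since $N^-A$ acts simply transitively on $G/K$ by the Iwasawa decomposition, $(\a \oplus \n^-) \cap \Ad(g_0)\k = \{0\}$ for every $g_0 \in G$, so this composition is a linear automorphism of $\a \oplus \n^-$ and hence $M(\tilde y) \in \GL(k+l,\R)$.

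The main obstacle I expect is the identification of the $n_j$-block of $M(\tilde y)$ with the $\n^-$-part of the Iwasawa projection, since by \eqref{eq:2.13} the entries of this block involve the ``folded'' coefficient $c_{\mu,k}t^{2\mu} + c_{-\mu,k}$ mixing both $\n^+$- and $\n^-$-contributions of $\Ad^{-1}(m_{w_\gamma}n(\tilde y))X$. Its resolution should exploit the $\Ad(a)$-action on positive root vectors inherent in the construction of the Oshima compactification, showing that the $\n^+$-contributions enter the Iwasawa projection only through the $\k$-summand and are therefore projected away.
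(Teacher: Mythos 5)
Your derivation of \eqref{eq:23} via the chain rule is exactly the paper's first step, and the identity $dL(X)[h(g\cdot\tilde x)] = -(X_{|\widetilde\X}h)(g\cdot\tilde x)$, together with $\chi_j(g,\tilde x)=t_j(g\cdot\tilde x)/t_j(\tilde x)$ on the interior, is used implicitly there as well. The divergence is in the invertibility argument, and here there is a genuine gap in your proposal that the ``obstacle'' paragraph does not resolve.

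The matrix $M(\tilde y)$, after you factor out the invertible Jacobian of the left $\n^-$-action on $N^-$ and the diagonal factor $\mathrm{diag}(\chi_j)$, has in its $(\mu,k)$-column the coefficient $c_{\mu,k}t^{2\mu}+c_{-\mu,k}$, whereas the $\n^-$-part of the $(\k\oplus\a\oplus\n^-)$-Iwasawa projection of $\Ad^{-1}(m_{w_\gamma}n(\tilde y))X$ is $c_{\mu,k}+c_{-\mu,k}$. These are not related by an invertible column or row rescaling, and your proposed resolution --- that ``the $\n^+$-contributions enter the Iwasawa projection only through the $\k$-summand and are therefore projected away'' --- is false: the $\n^+$-coefficient $c_{\mu,k}$ survives in $M$, merely damped by the factor $t^{2\mu}$, and at generic interior $t$ it contributes on equal footing with $c_{-\mu,k}$. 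The correct bookkeeping is to conjugate additionally by $a(\tilde y)=\exp(-\sum H_j\log t_j)$: the Iwasawa $\n^-$-coefficient of $\Ad^{-1}(m_{w_\gamma}n(\tilde y)a(\tilde y))X$ is $c_{\mu,k}t^{\mu}+c_{-\mu,k}t^{-\mu}$, and $M$ equals this matrix times the diagonal rescaling $\mathrm{diag}(t^{\mu})$. This salvages your argument on $W_\gamma^\ast$, where $t^{\mu}\neq0$, and there the transversality $(\a\oplus\n^-)\cap\Ad(g_0)\k=\{0\}$ (from simple transitivity of $N^-A$ on $G/K$) indeed gives invertibility. But it does \emph{not} extend to the boundary faces $t_j=0$ of $W_\gamma$, where $a(\tilde y)$ escapes to infinity, the rescaling factor $\mathrm{diag}(t^{\mu})$ degenerates, and the Iwasawa matrix itself blows up; the product is finite but the product formula says nothing about its rank. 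Since the Proposition is precisely about obtaining a statement on the full manifold with corners $W_\gamma$, including its boundary strata, the boundary case is the whole point and cannot be finessed by continuity from the interior without a separate argument.

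The paper's route is structurally different and addresses the boundary directly. It does not pass through any global transversality statement. Instead: (i) Lemma \ref{lemma:char} gives the explicit closed form $dL(H_i)\chi_j(g,\tilde x)=c_{ij}(m_{w_\gamma})\,\chi_j(g,\tilde x)$, so $\Gamma_4$ factors as the invertible Weyl-representation matrix $\bigl(c_{ij}(m_{w_\gamma})\bigr)$ times the nonvanishing diagonal $\mathrm{diag}(\chi_j)$; (ii) the non-degeneracy of $\Gamma_1$ is argued from the claim that the fundamental vector fields $X_{-\lambda,i|\widetilde\X}$ span the tangent space to the $t=q$ slice $\mathfrak L_\Theta(q)$; and (iii) Lemma \ref{lemma:char} further gives $dL(X_{-\lambda,i})\chi_j\equiv0$ on $\widetilde U_e$, so $\Gamma_2=0$ and $\Gamma$ is block lower-triangular there, with the general chart handled by transport along the chart isomorphisms $\phi_{m_w}=m_w\circ\phi_e$. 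If you want to keep the Iwasawa flavour of your argument you should at minimum supply a boundary argument (e.g.\ a factorization that stays regular as $t\to0$, or the paper's block-triangular structure), because the interior transversality statement alone does not yield the claim on all of $W_\gamma$.
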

\begin{proof} 
Fix a chart $(\widetilde W_\gamma, \phi_{\gamma})$, and let  $x$, $\tilde x$, $ g$ be as above. For $X\in\g$, one computes that
\begin{align*}
dL(X)\psi^\gamma_{\xi,x}(g)&=\frac{d}{ds}e^{i({\bf 1}_k\otimes T^{-1}_{t})\phi^{\e{-sX}g}_{\gamma} (x)\cdot\xi}|_{s=0}=i\psi^\gamma_{\xi,x}(g)\Big [\sum_{{i}=1}^{k}\xi_{i}dL(X)n_{i,\tilde x}(g)\\ &+\sum_{{j}=k+1}^{l+k}\xi_{j}dL(X)\chi_{j}(g,\tilde x)\Big ],
\end{align*}
showing the first equality. To see the invertibility of the matrix $\Gamma(x,g)$, note that for small $s$
\bqn 
\chi_j(\e{-sX}g, \tilde x) = \chi_j(g,\tilde x) \chi_j (\e{-sX},g\cdot \tilde x).
\eqn
Lemma \ref{lemma:char} then yields 
\begin{align*}
dL(H_i) \chi_j(g,\tilde x)&= \chi_j(g,\tilde x)  \frac d {ds} \Big ( e^{c_{ij}(m_{w_\gamma})s} \Big )_{|s=0} =\chi_j(g,\tilde x)c_{ij}(m_{w_\gamma}).
\end{align*}
 This means that  $\Gamma_4$ is the product of the matrix $\left (c_{ij}(m_{w_\gamma})\right )_{i,j}$ with the diagonal matrix whose $j$-th diagonal entry is $\chi_j(g,\tilde x)$. Since  $\left (c_{ij}(m_{w_\gamma})\right )_{i,j}$ is just the matrix representation of $\Ad(m_{w_\gamma}^{-1}) $ relative to the basis $\{H_1,\dots, H_l\}$ of $\a$, it is invertible. On the other hand, $\chi_j(g,\tilde x)$ is non-zero for all $j \in \{1,\dots,l\}$ and arbitrary $g $ and $ \tilde x$.  Therefore $\Gamma_4$, being the product of two invertible matrices, is invertible. Next, let us show that  the matrix  $\Gamma_1$ is non-singular. Its  $(ij)^{th}$ entry reads
\bqn
dL(X_{-\lambda,i})n_{j,\tilde x}(g)=\frac d{ds} n_{j,\tilde x}(\e{-sX_{-\lambda,i}}\cdot g )_{|s=0}=(-X_{-\lambda,{i}|\widetilde{\X}})_{g\cdot \tilde x} (n_{j}).
\eqn
 For $\Theta\subset\Delta$, $q\in \R^l$, we  define the $k$-dimensional submanifolds
 \bqn 
 \mathfrak{L}_{\Theta}(q)=\{ \tilde x=\phi_{\gamma}(n,q)\in \widetilde W_{\gamma}: q_i\neq 0 \Leftrightarrow \alpha_i \in \Theta \},
  \eqn  
and consider  the decomposition  $T_{g\cdot \tilde x}\widetilde{\X}_{\Theta}=T_{g\cdot \tilde x}\mathfrak{L}_{\Theta}(q)\oplus N_{g\cdot \tilde x}\mathfrak{L}_{\Theta}(q)$  of $T_{g\cdot \tilde x}\widetilde{\X}_{\Theta}$ into the tangent and normal space to $\mathfrak{L}_{\Theta}(q)$ at the point  $g\cdot \tilde x \in \widetilde{\X}_{\Theta}$.
Since  $\widetilde{\X}_{\Theta}$ is a $G$-orbit, the group $G$ acts transitively on it. Now, as $g$ varies over $G$ in Lemma \ref{lemma:fundvec}, one deduces that $N^{-}\times A(\Theta)$ acts locally transitively on $\widetilde{\X}_{\Theta}$. In addition, by the definition of  $\mathfrak{L}_{\Theta}(q)$,  $N_{g\cdot \tilde x}\mathfrak{L}_{\Theta}(q)$ is spanned  by the vector fields $\{ -t_i\frac{\partial}{\partial t_i}\}_{ \alpha_i \in\Theta}$.  Consequently, $T_{g\cdot \tilde x}\mathfrak{L}_{\Theta}(q)$ must be equal to the span of the vector fields $\{X_{-\lambda,{i}|\widetilde{\X}}\}$, which means that   $N^-$ acts locally transitively on $\mathfrak{L}_{\Theta}(q)$ for arbitrary $\Theta$. Since the latter is  parametrized by the coordinates $(n_1,\dots,n_k)$, one concludes that the matrix  $((X_{-\lambda,{i}|\widetilde{\X}})_{g \cdot \tilde x}(n_j))_{ij}$ has full rank. Thus,  $\Gamma_1$ is non-singular. On the other hand, if $\tilde x =\pi(e,n,t)\in \tilde U_e$, Lemma \ref{lemma:char} implies
\bqn 
dL(X_{-\lambda,i}) \chi_j(g,\tilde x)= \chi_j(g,\tilde x)  \frac {d} {ds}\Big ( \chi_j(e^{-s X_{-\lambda,i}}, g \cdot \tilde x)  \Big )_{|s=0} =0, \\
\eqn
showing that $\Gamma_2$ is identically zero, while $\Gamma_4$ is a non-singular diagonal matrix in this case.  Geometrically, this amounts to the fact that the fundamental vector field   corresponding to $H_j$ is transversal to the hypersurface defined by  $t_j=q\in \R\setminus \{0\}$, while  the vector fields corresponding to the Lie algebra elements $X_{-\lambda,i}, H_i, \, i\neq j$, are tangential. We therefore conclude that $\Gamma(x,g)$ is non-singular if $\tilde x \in \widetilde U_e$. But since the different copies $\widetilde \X_{\Theta_{(e,n,t)}}$ of $G/P_{\Theta_{(e,n,t)}} (K)\simeq N^- \times B_{(e,n,t)}\subset N^- \times \R^l \simeq \widetilde U_e$ in $\widetilde \X$ are isomorphic to each other, the same must hold if $\tilde x$ lies in one of the remaining charts $\widetilde U_{m_{w_\gamma}}$, and the assertion of the lemma follows. 

\end{proof}
We can now state the main result of this paper. In what follows, $\{(\widetilde W_\gamma, \phi_{\gamma})\}_{\gamma \in I}$ will always denote the atlas of $\widetilde \X$ constructed above.
\begin{theorem}
\label{thm:3}
Let $\widetilde \X$ be the Oshima compactification of a Riemannian symmetric space $\X\simeq G/K$ of non-compact type, and $f\in \S({G})$ a rapidly decaying function on $G$.  Let further $\mklm{( \widetilde W_\gamma, \phi_\gamma^{-1})}_{\gamma \in I}$ be the atlas of $\widetilde \X$ construced above. Then the operators $\pi(f)$ are locally of the form 
\begin{gather}
\label{20}
  A^\gamma_fu(x)= \int e ^{i x \cdot \xi} a_f^\gamma(x,\xi)\hat u(\xi) \dbar\xi, \qquad u \in \CT(W_\gamma),
\end{gather}
where $a_f^\gamma(x,\xi)=\tilde  a_f^\gamma(x,\xi_1, \dots, \xi_k, x_{k+1} \xi_{k+1}, \dots, \xi_{k+l} x_{k+l})$, and $\tilde  a_f^\gamma(x,\xi) \in \Symsl(W_\gamma \times \R^{k+l}_\xi)$ is given by \eqref{eq:auxsym}.  In particular, the kernel of the operator $A^\gamma_f$ is determined by its restrictions to $W_\gamma^\ast \times W_\gamma^\ast$,  where $W_\gamma^\ast=\{ x=(n,t) \in W_\gamma: t_1 \cdots t_l \not=0\}$, and given by the oscillatory integral
\begin{equation}
\label{20b}
  K_{A_f^\gamma} (x,y)=\int e^{i(x-y) \cdot \xi} a^\gamma_f(x,\xi) \dbar \xi.
\end{equation}
\end{theorem}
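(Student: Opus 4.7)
The plan is to start from the local expression
$A_f^\gamma u(x) = \int_G c_\gamma(g) f(g)\, u(\phi_\gamma^g(x))\, d_G(g)$
derived just before the statement, and to apply Fourier inversion on $u$ together with Fubini in order to rewrite $A_f^\gamma$ as an oscillatory integral. This yields directly the representation \eqref{20} with $a_f^\gamma(x,\xi) = e^{-ix\cdot\xi}\hat f_\gamma(x,\xi)$. To identify the totally characteristic structure, I would invoke Lemma \ref{lemma:expansion} and the ensuing factorisation \eqref{eq:powers of t}: the last $l$ components of $\phi_\gamma^g(x)$ are precisely $t_j(\tilde x)\chi_j(g,\tilde x)$, so the phase $\phi_\gamma^g(x)\cdot\xi$ depends on $\xi_{k+1},\dots,\xi_{k+l}$ only through the products $t_j\xi_{k+j}$. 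Unfolding this dependence reduces $a_f^\gamma$ to the form stated in the theorem with $\tilde a_f^\gamma$ given by \eqref{eq:auxsym}.

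The bulk of the work is to show that $\tilde a_f^\gamma \in \Symsl(W_\gamma \times \R^{k+l})$. For the symbol estimate \eqref{H}, my approach would exploit Proposition \ref{prop:1}. Writing $Y_1,\dots,Y_{k+l}$ for the basis $\{X_{-\lambda,i},H_j\}$ and setting $v(x,g,\xi) := \Gamma(x,g)\xi$, relation \eqref{eq:23} yields
$\sum_m v_m\, dL(Y_m)\psi^\gamma_{\xi,x}(g) = i|v|^2\, \psi^\gamma_{\xi,x}(g)$.
Consequently the operator $\Lambda := (1+|v|^2)^{-1}\bigl(1 - i\sum_m v_m\, dL(Y_m)\bigr)$ satisfies $\Lambda\psi^\gamma_{\xi,x} = \psi^\gamma_{\xi,x}$, and iterating its formal adjoint against $c_\gamma f$ via Proposition \ref{prop:A} inserts arbitrary powers of $(1+|v|^2)^{-1}$ inside the $G$-integral. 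Invertibility of $\Gamma(x,g)$, continuity in $(x,g)$ and compactness of $\supp c_\gamma$ yield $|v| \geq C|\xi|$ uniformly for $x$ in any compact subset of $W_\gamma$ and $g \in V_\gamma^1$, hence decay of every order in $\xi$. Differentiation in $x$ and $\xi$ produces polynomial factors in $\xi$ together with derivatives of $\Gamma$ and of the $\chi_j$, each of which is absorbed by additional iterations of $\Lambda^\ast$; the Schwartz nature of $f$ (condition ii in the definition of $\S(G)$) ensures that all integrals converge at every step.

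For the lacunary condition, I would compute the translated partial Fourier transform in each boundary variable directly. Extracting the $\xi_{k+j}$-dependence as $e^{i(\chi_j(g,\tilde x)-1)\xi_{k+j}}$ and performing the $\xi_{k+j}$-integration gives
$\int e^{i(1-t)\xi_{k+j}}\tilde a_f^\gamma(x,\xi)\, d\xi_{k+j} = 2\pi\int_G \delta\bigl(\chi_j(g,\tilde x) - t\bigr)\, c_\gamma(g) f(g)\, E_j(x,\xi',g)\, d_G(g)$,
with $E_j$ collecting the remaining factors. Evaluation at $g=e$ in \eqref{eq:powers of t} forces $\chi_j(e,\tilde x)=1$, and combined with the non-vanishing \eqref{eq:notzero} continuity implies $\chi_j(g,\tilde x)>0$ throughout $V_\gamma^1$ (after shrinking to a connected neighbourhood of $e$ if necessary). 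The Dirac mass is therefore unsupported for $t<0$, proving \eqref{V}. The kernel representation \eqref{20b} is then the standard oscillatory-integral form of $K_{A_f^\gamma}$, and the fact that $K_{A_f^\gamma}$ is determined by its restriction to $W_\gamma^\ast \times W_\gamma^\ast$ follows from analyticity: the coefficient data $n_j(g\cdot\tilde x)$ and $\chi_j(g,\tilde x)$ extend analytically across the hypersurfaces $\{t_j=0\}$ by Lemmas \ref{lemma:fundvec} and \ref{lemma:8}.

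The main obstacle I anticipate is the symbol estimate itself, specifically controlling the commutators $[\Lambda^\ast,\partial_x^\beta]$ and $[\Lambda^\ast,\partial_\xi^\alpha]$ uniformly in $\xi$ so that each successive term remains of strictly lower order after integration by parts. This hinges on uniform bounds for $\Gamma^{-1}(x,g)$ and its derivatives on compact $x$-sets together with $\supp c_\gamma$, and on careful bookkeeping of how the $\xi$-derivatives of $(1+|v|^2)^{-1}$ compete with the gain obtained at each iteration of $\Lambda^\ast$.
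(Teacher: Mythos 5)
Your outline tracks the paper's proof closely in its opening steps: the local oscillatory-integral representation via Fourier inversion and Fubini, and the identification of the totally characteristic structure from Lemma \ref{lemma:expansion} and \eqref{eq:powers of t}, are essentially identical. For the symbol estimate, however, you take a genuinely different route. The paper expands $(1+\xi^2)^N\psi^\gamma_{\xi,x}$ via the algebraic identities \eqref{24}--\eqref{26}, producing coefficients $b^N_\alpha(x,g)$ that are \emph{independent of} $\xi$; integration by parts against $c_\gamma f$ via Proposition \ref{prop:A} then closes the argument in one pass. Your operator $\Lambda=(1+|v|^2)^{-1}(1-i\sum_m v_m\,dL(Y_m))$, $v=\Gamma(x,g)\xi$, has coefficients that are rational in $\xi$, so each iteration of $\Lambda^\ast$ generates terms whose $\xi$-decay you must track by hand: after one pass you gain $O(|\xi|^{-1})$ rather than the quadratic gain the paper gets from $(1+\xi^2)^{-N}$, and the $g$-derivatives of $v_m/(1+|v|^2)$ contribute symbol-type remainders of matching order. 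This works, and you correctly anticipate the bookkeeping as the main labour, but it is noticeably heavier than the paper's scheme precisely because the paper exploits that the $\Lambda^\alpha_\beta$ and $b^N_\alpha$ in \eqref{24} and \eqref{26} carry no $\xi$-dependence.

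Your lacunarity argument is also different from the paper's and contains a step that needs repair. The paper does not compute the Mellin transform directly; it first establishes $a^\gamma_f\in\mathrm{S}^{-\infty}(W_\gamma^\ast\times\R^{k+l})$, then shows $\supp K_{A^\gamma_f}\subset\bigcup_\Theta\overline{W_\gamma^\Theta}\times\overline{W_\gamma^\Theta}$ (the operator only moves points within a $G$-orbit), and reads off the lacunary condition from the vanishing of the kernel across sign changes after the change of variables $r_j=y_j/x_j-1$, $\xi'_j=x_j\xi_j$. Your computation, by contrast, pulls the $\xi_{k+j}$-integral past the $G$-integral inside $\tilde a^\gamma_f$ and produces a formal $\delta(\chi_j(g,\tilde x)-t)$. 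That exchange is not a Fubini application: $\int_{\R}e^{i(\chi_j-t)\xi_{k+j}}\,d\xi_{k+j}$ is not absolutely convergent, whereas the outer $\xi_{k+j}$-integral is only convergent \emph{after} the $G$-integration has been performed. The fix is standard (regularize by a cutoff $\chi(\epsilon\xi_{k+j})$ and pass to the limit, or observe that $\tilde a^\gamma_f$ is, in the $\xi_{k+j}$-variable, the Fourier transform of the pushforward by $\chi_j(\cdot,\tilde x)-1$ of the finite measure $c_\gamma f\,d_G$, whose support lies in $[\inf\chi_j-1,\sup\chi_j-1]\subset(-1,\infty)$ by the positivity of $\chi_j$), and the conclusion you reach is correct, but as written the identity with the delta function is a formal manipulation, not a proof. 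The positivity $\chi_j>0$ that you invoke is also not quite free: $\chi_j(e,\tilde x)=1$ and \eqref{eq:notzero} give it only on the component of $e$, and you correctly note the need to shrink $V^1_\gamma$; the paper's route via the kernel support avoids needing this fact at all.

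The genuine gap is the last step. You justify that $K_{A^\gamma_f}$ is determined by its restriction to $W_\gamma^\ast\times W_\gamma^\ast$ by appealing to analytic continuation of the coefficient data $n_j(g\cdot\tilde x)$ and $\chi_j(g,\tilde x)$ across $\{t_j=0\}$. That argument does not work: the kernel is a distribution, and analyticity of the symbol in $x$ does not preclude boundary-supported contributions (indeed, at $x_j=0$ the amplitude $a^\gamma_f(x,\xi)=\tilde a^\gamma_f(x,\dots,x_j\xi_j,\dots)$ becomes constant in $\xi_j$ and the $\xi_j$-integral formally produces a $\delta(x_j-y_j)$ mass). What is actually needed here is the structure theorem for $\mathrm{L}^m_b$ kernels — the fact that a totally characteristic operator with lacunary symbol has no hidden mass on the corner faces — which is exactly what the paper cites as Melrose's Lemma 4.1. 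Your proof should invoke that lemma (or reprove its content); analytic continuation of the coefficients is not a substitute for it.
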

As a consequence, we obtain the following  
\begin{corollary}
\label{corollary}
Let $\widetilde \X_\Delta$ be an open $G$-orbit in $\widetilde \X$ isomorphic to $G/K$. Then the  continuous linear operators
\begin{equation*}
\pi(f)_{|\overline{\widetilde \X_\Delta}}:\CT(\overline{\widetilde \X_\Delta}) \longrightarrow \Cinft(\overline{\widetilde \X_\Delta}),
\end{equation*}
 are  totally characteristic pseudodifferential operators of class $\L^{-\infty}_b$ on the manifolds with corners $\overline{\widetilde \X_\Delta}$.
\end{corollary}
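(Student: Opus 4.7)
The plan is to derive the corollary as an essentially formal consequence of Theorem \ref{thm:3} together with the local definition of $\L^{-\infty}_b$ on a manifold with corners recalled in Section \ref{sec:PDO}. First I would exhibit $\overline{\widetilde \X_\Delta}$ as a manifold with corners by restricting the Oshima atlas. By Theorem \ref{Thm.1}, the open orbit $\widetilde \X_\Delta \simeq G/K$ is characterized in $\widetilde \X$ by a fixed sign configuration of the $t$-coordinates, which after relabeling the charts we may take to be $(\sgn t_1, \dots, \sgn t_l) = (+1, \dots, +1)$. Its closure therefore meets each chart $\widetilde W_\gamma$ in a set of the form $\R^k \times [0,\infty)^l$. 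After a cosmetic permutation placing the $l$ corner coordinates $t_1, \dots, t_l$ in the first $l$ slots, the restricted atlas $\{(\widetilde W_\gamma \cap \overline{\widetilde \X_\Delta}, \phi_\gamma^{-1})\}_{\gamma \in I}$ consists of coordinate patches isomorphic to open subsets of the model space $\R^{n,l}$, with $n = k+l$, and thus endows $\overline{\widetilde \X_\Delta}$ with a real-analytic manifold-with-corners structure.

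Next, in each such chart Theorem \ref{thm:3} writes the localization $A_f^\gamma$ of $\pi(f)$ as an oscillatory integral with amplitude
\begin{equation*}
a_f^\gamma(x,\xi) = \tilde a_f^\gamma(x, \xi_1, \dots, \xi_k, x_{k+1}\xi_{k+1}, \dots, x_{k+l}\xi_{k+l}),
\end{equation*}
where $\tilde a_f^\gamma \in \Symsl(W_\gamma \times \R^{k+l}_\xi)$. Under the coordinate permutation just introduced this becomes exactly the local form $\tilde a(x, x_1 \xi_1, \dots, x_l \xi_l, \xi')$ required in Section \ref{sec:PDO} for a totally characteristic pseudodifferential operator on $\R^{n,l}$: the symbol is of order $-\infty$ and, by virtue of the subscript ``la'', satisfies the lacunary condition in each of the $l$ corner frequency variables separately. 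Glueing the local pieces along the atlas, I would conclude that $\pi(f)|_{\overline{\widetilde \X_\Delta}}$ lies in $\L^{-\infty}_b(\overline{\widetilde \X_\Delta})$.

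The substantive analytic work has already been carried out in Theorem \ref{thm:3}; what remains is entirely a matter of recognition. The only point meriting an explicit word is the coordinate permutation aligning the $(n,t)$-ordering of Section \ref{Sec:2} with the $(x_1,\dots,x_k,x')$-ordering of $\R^{n,k}$ adopted in Section \ref{sec:PDO}: this is purely notational and preserves the symbol class, so no genuine obstacle appears. In particular, the positivity (indeed non-vanishing) of the coefficient functions $\chi_j(g,\tilde x)$ supplied by \eqref{eq:notzero}, which ensured the lacunary property in the proof of Theorem \ref{thm:3}, automatically guarantees the multi-index lacunary condition on $\overline{\widetilde \X_\Delta}$, completing the identification with the class $\L^{-\infty}_b$.
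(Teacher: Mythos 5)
Your proposal is correct and follows essentially the same route the paper takes, which is simply to read the corollary off from Theorem~\ref{thm:3} and the definition of $\L^{l}_b$ on a manifold with corners given in Section~\ref{sec:PDO} (the paper in fact gives no further argument and declares the corollary proved). You correctly identify the two ingredients: the restricted atlas gives $\overline{\widetilde\X_\Delta}$ the structure of a manifold with corners modelled on $\R^k\times[0,\infty)^l$, and the local amplitude $a_f^\gamma(x,\xi)=\tilde a_f^\gamma(x,\xi_1,\dots,\xi_k,x_{k+1}\xi_{k+1},\dots,x_{k+l}\xi_{k+l})$ with $\tilde a_f^\gamma\in\Symsl$ produced by Theorem~\ref{thm:3} is, up to the cosmetic reordering of coordinates you mention, exactly the local form demanded of an element of $\L^{-\infty}_b(\R^{n,l})$, with the lacunary condition in each of the $l$ corner variables already established.
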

\qed

\begin{proof}[Proof of Theorem \ref{thm:3}]
Our considerations will essentially follow the proof of Theorem 4 in  \cite{ramacher06}. 
Let $\Gamma(x,g)$ be the matrix defined in \eqref{eq:Gamma}, and consider its extension as an endomorphism in $\C^1[\R^{k+l}_\xi]$ to the symmetric algebra ${\rm{S}}(\C^1[\R^{k+l}_\xi])\simeq \C[\R^{k+l}_\xi]$.  Since for $x \in W_\gamma$, $g \in V^1_\gamma$, $\Gamma(x,g)$ is invertible, its extension to $ {\rm{S}}^N(\C^1[\R^{k+l}_\xi])$ is also an automorphism for any $N\in\N$. Regarding  the polynomials $\xi_1,\dots,\xi_{k+l}$ as a basis in $\C^1[\R^{k+l}_\xi]$, let us denote the image of the basis vector $\xi_j$ under the endomorphism $\Gamma(x,g)$ by $\Gamma \xi_j$, so that  by \eqref{eq:23}
\begin{align*}
\Gamma \xi_j&= -i  \psi^\gamma_{-\xi,x}(g) dL(X_{-\lambda,j})\psi^\gamma_{\xi,x}(g),  \qquad  1\leq j \leq k, \\
\Gamma \xi_j&= -i \psi^\gamma_{-\xi,x}(g) dL(H_j) \psi^\gamma_{\xi,x}(g),  \qquad  k+1\leq j \leq k+l.
\end{align*}
 Every polynomial $\xi_{j_1} \otimes \dots \otimes \xi_{j_N}\equiv \xi_{j_1} \dots \xi_{j_N}$ can then be written as a linear combination
 \begin{equation}
\label{24}
   \xi^\alpha =\sum _\beta \Lambda^\alpha_\beta (x,g) \Gamma \xi_{\beta_1} \cdots \Gamma \xi_{\beta_{|\alpha|}},
 \end{equation}
where the $\Lambda^\alpha_\beta(x,g)$ are real-analytic functions on $W_\gamma \times V^1_\gamma$. We need now the following 
\begin{lemma}
\label{lem:3}
For arbitrary indices $\beta_1,\dots, \beta_r$, one has
  \begin{align}
\label{25}
\begin{split}
    i^r \psi^\gamma_{\xi,x}(g) \Gamma\xi_{\beta_1} \cdots \Gamma\xi_{\beta_r}&= dL(X_{\beta_1} \cdots X_{\beta_r}) \psi^\gamma_{\xi,x}(g)\\&+ \sum_{s=1}^{r-1} \sum _{\alpha_1,\dots, \alpha_s} d ^{\beta_1,\dots, \beta_r}_{\alpha_1,\dots, \alpha_s} (x ,g) dL(X_{\alpha_1} \cdots X_{\alpha_s}) \psi^\gamma_{\xi,x}(g),
\end{split}  
\end{align}
where the coefficients $ d ^{\beta_1,\dots, \beta_r}_{\alpha_1,\dots, \alpha_s} (x ,g) \in \Cinft(\tilde W_\gamma \times \supp c_\gamma)$ are at most of exponential growth  in $g$, and independent of $\xi$.
\end{lemma}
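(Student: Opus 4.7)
The plan is to argue by induction on $r$. The base case $r=1$ is immediate from the very definition of $\Gamma\xi_j$ together with \eqref{eq:23} of Proposition \ref{prop:1}, which gives $i\psi^\gamma_{\xi,x}(g)\Gamma\xi_{\beta_1}=dL(X_{\beta_1})\psi^\gamma_{\xi,x}(g)$. For the inductive step, I would assume the statement for all orders up to $r-1$ and apply the left-invariant operator $dL(X_{\beta_1})$ to both sides of the identity corresponding to the indices $\beta_2,\dots,\beta_r$.

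On the right-hand side, the Leibniz rule splits $dL(X_{\beta_1})$ across the coefficient functions $d^{\beta_2,\dots,\beta_r}_{\alpha_1,\dots,\alpha_s}(x,g)$ and the operators $dL(X_{\alpha_1}\cdots X_{\alpha_s})$, producing the leading contribution $dL(X_{\beta_1}\cdots X_{\beta_r})\psi^\gamma_{\xi,x}(g)$ plus further terms already in the desired shape. On the left-hand side, Leibniz generates the target quantity $i^r\psi^\gamma_{\xi,x}(g)\Gamma\xi_{\beta_1}\cdots\Gamma\xi_{\beta_r}$, using the base case to rewrite $dL(X_{\beta_1})\psi^\gamma_{\xi,x}(g)$, together with correction terms of the form
\begin{equation*}
 i^{r-1}\psi^\gamma_{\xi,x}(g)\bigl[dL(X_{\beta_1})\Gamma\xi_{\beta_k}\bigr]\prod_{j\in\{2,\dots,r\}\setminus\{k\}}\Gamma\xi_{\beta_j}, \qquad k=2,\dots,r,
\end{equation*}
which I would transpose to the right-hand side after isolating the principal term.

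Each correction term is linear in $\xi$ times a product of $r-1$ factors $\Gamma\xi_{\beta_j}$. Using the invertibility of $\Gamma(x,g)$ established in Proposition \ref{prop:1}, each linear form $\xi_m$ can be expressed as a linear combination of $\Gamma\xi_1,\dots,\Gamma\xi_{k+l}$ with coefficients real-analytic in $(x,g)$, just as in \eqref{24}. After this substitution the correction terms become linear combinations of expressions of the form $i^{r-1}\psi^\gamma_{\xi,x}(g)\Gamma\xi_{\gamma_1}\cdots\Gamma\xi_{\gamma_{r-1}}$, to which the inductive hypothesis applies, yielding a decomposition into monomials $dL(X_{\alpha_1}\cdots X_{\alpha_s})\psi^\gamma_{\xi,x}(g)$ with $s\leq r-1$ and $\xi$-independent coefficients. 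Collecting all contributions produces the claimed identity.

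The coefficients $d^{\beta_1,\dots,\beta_r}_{\alpha_1,\dots,\alpha_s}(x,g)$ so constructed are polynomial combinations of the matrix entries of $\Gamma(x,g)$ and $\Gamma(x,g)^{-1}$, of their iterated $dL$-derivatives, and of the coefficients produced at the previous induction step; hence they are real-analytic on $\widetilde W_\gamma\times\supp c_\gamma$ and independent of $\xi$. The main obstacle is the growth estimate: one must verify that the entries of $\Gamma$, namely the left-invariant derivatives of the functions $n_{j,\tilde x}(g)$ and $\chi_j(g,\tilde x)$ appearing in Lemma \ref{lemma:fundvec} and Lemma \ref{lemma:char}, together with the entries of $\Gamma^{-1}$ obtained via Cramer's rule, admit bounds at most of exponential growth in $|g|$ uniformly in $x\in W_\gamma$, with analogous control for all their $dL$-derivatives. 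This is the analogue of the corresponding step in \cite[Lemma 3]{ramacher06}, and follows from the explicit real-analytic expressions for the fundamental vector fields provided by Lemma \ref{lemma:fundvec} together with the standard estimates for the coefficient functions $c_{\pm\lambda,i}(g,n)$ and $c_i(g,n)$.
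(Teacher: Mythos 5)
Your proposal is correct and follows essentially the same inductive argument as the paper: base case from the defining relation $i\psi^\gamma_{\xi,x}\Gamma\xi_{\beta_1}=dL(X_{\beta_1})\psi^\gamma_{\xi,x}$, then apply one more left-invariant derivative, use Leibniz, re-express the resulting linear forms in $\xi$ through the inverse matrix $\Lambda$ of \eqref{24}, apply the inductive hypothesis to the lower-order products $\Gamma\xi_{\gamma_1}\cdots\Gamma\xi_{\gamma_{r-1}}$, and conclude the exponential-growth bound from that of $n_{j,\tilde x}(g)$ and $\chi_j(g,\tilde x)$. Your explicit mention that the entries of $\Gamma^{-1}$ (not just $\Gamma$) must also be controlled is a slightly more careful remark than the paper's, but it is the same proof.
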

\begin{proof}
The lemma is proved by  induction.   For $r=1$ one has $i \psi^\gamma_{\xi,x}(g) \Gamma \xi_p =d L(X_p) \psi^\gamma_{\xi,x}(g)$, where $1 \leq p \leq d$. Differentiating the latter equation with respect to $X_j$, and  writing $\Gamma \xi_p = \sum _{s=1}^{k+l} \Gamma_{ps} (x,g) \, \xi_s$, we obtain with \eqref{24} the equality
  \begin{gather*}
    -\psi^\gamma_{\xi,x}(g) \Gamma \xi_j \Gamma \xi_p = dL(X_jX_p) \psi^\gamma_{\xi,x}(g)-\sum_{s,r=1}^{k+l} (dL (X_j) \Gamma_{ps}) (x,g) \Lambda ^s_r(x,g) dL (X_r) \psi^\gamma_{\xi,x}(g).
  \end{gather*}
Hence, the assertion of the lemma is correct for $r=1,2$. Now, assume that it holds for  $r\leq N$. Setting $r=N$ in equation \eqref{25}, and differentiating with respect to $X_p$, yields for the left hand side
\begin{gather*}
  i^{N+1} \psi^\gamma_{\xi,x}(g) \Gamma \xi_p \Gamma\xi_{\beta_1} \cdots \Gamma \xi_{\beta_N} \\+ i^N  \psi^\gamma_{\xi,x}(g) \Big ( \sum_{s,q=1}^{k+l} (dL(X_p) \Gamma_{\beta_1s})(x,g) \Lambda^s_{q} (x,g) \Gamma \xi_q \Big ) \Gamma \xi _{\beta_2} \cdots \Gamma \xi_{\beta_N} + \dots.
\end{gather*}
By assumption, we can apply \eqref{25} to the products $\Gamma \xi_q \Gamma\xi_{\beta_2} \cdots\Gamma \xi_{\beta_N}, \dots$ of at most $N$ factors. Since the functions $n_{i,m}(g)$ and $ \chi_j(g,m)$, and consequently the coefficients of $\Gamma(x,g)$, are at most of exponential growth in $g$, the assertion of  the lemma follows.
\end{proof}

\noindent
\emph{End of proof of Theorem \ref{thm:3}}. 
Let us next show that $\tilde  a_f^\gamma(x,\xi) \in \Syms(W_\gamma \times \R^{k+l}_\xi)$. As already noted, $\tilde  a_f^\gamma(x,\xi) \in \Cinft(W_\gamma \times \R^{k+l}_\xi)$. While differentiation with respect to $\xi$ does not alter the growth properties of $\tilde a^\gamma_f(x,\xi)$, differentiation with respect to $x$  yields additional powers in $\xi$.  
Now,  as an immediate consequence of equations \eqref{24} and \eqref{25}, one computes for arbitrary $N \in \N$
\begin{equation}
  \label{26}
 \psi^\gamma_{\xi,x}(g)(1+\xi^2)^N=  \sum_{r=0}^{2N} \sum_{|\alpha| =r} b^N_\alpha(x,g) d L(X^\alpha) \psi^\gamma_{\xi,x}(g),
\end{equation}
where the coefficients $b^N_\alpha(x,g) \in \Cinft(W_\gamma \times V^1_\gamma)$  are at most of exponential growth in  $g$. Now, $(\gd^\alpha_\xi \gd^\beta_x \tilde  a ^\gamma_f)(x,\xi)$ is a finite sum of terms of the form
\begin{equation*}
  \xi^\delta e^{-i(x_1,\dots x_k,1, \dots, 1) \cdot \xi} \int_{G} f(g) d_{\delta \beta}(x,g) \psi^\gamma_{\xi,x}(g)c_\gamma(g) dg,
\end{equation*}
the functions $d_{\delta\beta}(x,g) \in \Cinft (W_\gamma \times V^1_\gamma)$ being at most of exponential growth in $g$. Making use of equation \eqref{26}, and integrating according to Proposition \ref{prop:A}, we finally obtain for arbitrary $\alpha, \beta$ the estimate
\begin{equation*}
  |(\gd ^\alpha_\xi \gd ^\beta _x \tilde  a_f^\gamma) (x,\xi) | \leq \frac 1 {(1+\xi^2)^N} C_{\alpha,\beta,\mathcal{K}} \qquad x \in \mathcal{K},
\end{equation*}
where $\mathcal{K}$ denotes an arbitrary  compact set  in $W_\gamma$, and $N\in \N$. This proves that $\tilde a^\gamma_f(x,\xi) \in \Syms(W_\gamma \times \R^{k+l}_\xi)$. Since equation \eqref{20} is an immediate consequence of Fourier inversion formula,  it remains to show that $\tilde  a_f^\gamma(x,\xi)$ satisfies the lacunary condition \eqref{V} for each of the coordinates $t_i$. Now, it is clear that $a^\gamma_f \in \mathrm{S}^{-\infty} ( W_\gamma ^\ast \times \R^{k+l}_\xi$), since $G$ acts transitively on each $\widetilde \X_{\Delta}$.  As a consequence, the Schwartz kernel   of the restriction of the operator $A^\gamma_f:\CT (W_\gamma) \rightarrow \Cinft(W_\gamma)$  to $W^\ast_\gamma$ is given by the absolutely convergent integral 
\bqn 
\int e^{i(x-y) \cdot \xi} a^\gamma_f(x,\xi) \dbar \xi \in \Cinft( W_\gamma^\ast \times W_\gamma^\ast).
\eqn
 Next, let us write $W_\gamma=\bigcup _{\Theta \subset \Delta} W_\gamma^\Theta$, where $W_\gamma^\Theta=\mklm{x=(n,t): t_i \not=0 \Leftrightarrow \alpha_i \in \Theta}$. Since on $W_\gamma^\Theta$ the function $A_f^\gamma u$ depends only on the restriction of $u \in \CT(W_\gamma)$ to $W_\gamma^\Theta$, one deduces that 
\bq
\label{eq:33}
\supp K_{A_f^\gamma} \subset \bigcup _{\Theta \subset \Delta} \overline{W_\gamma^\Theta} \times  \overline{W_\gamma^\Theta}.
\eq
Therefore, each of the integrals 
\bqn 
\int e ^{i(x_j-y_j) \xi_j} \tilde a^\gamma _f (x,({\bf{1}}_k \otimes T_x)\xi) \d \xi_j,\qquad  j=k+1, \dots, k+l,
\eqn
which are smooth functions on $W_\gamma^\ast \times W_\gamma^\ast$, must vanish if $x_j$ and $y_j$ do not have the same sign. With the substitution $r_j=y_j/x_j -1$, $\xi_j x_j =\xi_j'$ one finally arrives at the conditions
\bqn 
\int e^{-ir_j \xi_j} \tilde a^\gamma_f(x,\xi) \d \xi_j =0 \qquad \mbox{ for } r_j < -1, \, x \in W_\gamma^\ast.
\eqn
 But since $\tilde a^\gamma_f$ is rapidly decreasing in $\xi$, the Lebesgue bounded convergence theorem implies that these conditions must also hold for $x \in W_\gamma$. Thus, the lacunarity of the symbol $\tilde a_f ^\gamma$ follows.  The fact that the kernel   $K_{A^\gamma_f}$ must be determined by its restriction to $W_\gamma^\ast \times W_\gamma^\ast$, and hence by the oscillatory integral \eqref{20b}, is now a consequence of  \cite{melrose}, Lemma 4.1, completing the proof of Theorem \ref{thm:3}.
\end{proof}

As a consequence of Theorem \ref{thm:3}, we can locally write the kernel of $\pi(f)$ in the form
\begin{align}
\label{27}
\begin{split}
  K_{A_f^\gamma}(x,y) &= \int e ^{i(x-y) \cdot  \xi} a^\gamma _f (x,\xi) \dbar \xi=\int e^{i(x-y) \cdot  ({\bf{1}}_k \otimes T_x^{-1}) \xi} \tilde a_f^\gamma(x,\xi) |\det ({\bf{1}}_k \otimes T_x^{-1})'(\xi)| \dbar \xi\\
&=\frac 1 {|x_{k+1}\cdots x_{k+l}|} \tilde A_f^\gamma(x,x_1-y_1, \dots, 1 - \frac{y_{k+1}}{x_{k+1}}, \dots), \qquad x_{k+1}\cdots x_{k+l} \not=0, 
\end{split}
\end{align}
where $\tilde A_f^\gamma(x,y)$ denotes the inverse Fourier transform of $\tilde a_f^\gamma(x,\xi)$,
\begin{equation}\label{eq:34}
  \tilde A_f^\gamma(x,y)= \int e ^{i y\cdot \xi} \tilde a_f ^\gamma(x,\xi) \, \dbar \xi.
\end{equation}
Since for $x \in W^\gamma$ the amplitude  $\tilde a_f^\gamma(x,\xi)$ is rapidly falling in $\xi$, it follows that $\tilde A_f^\gamma(x,y) \in \S(\R^n_y)$,  the Fourier transform being an isomorphism on the Schwartz space. Therefore $K_{A^\gamma_f}(x,y)$ is rapidly decreasing as $| x_{j}| \to 0 $ if $x_j\not=y_j$ and   $k+1\leq j\leq  k+l$.  Furthermore, by the lacunarity of $\tilde a ^\gamma_f$, $ K_{A_f^\gamma}(x,y)$ is also rapidly decaying  as $| y_{j}| \to 0 $ if $x_j\not=y_j$ and   $k+1\leq j\leq  k+l$.

\section{Holomorphic semigroup and resolvent kernels}
\label{Sec:5}

In this section, we shall study   the holomorphic semigroup generated by  a strongly elliptic operator $\Omega$ associated to the  regular representation $(\pi, \mathrm{C}(\widetilde \X))$ of $G$,   as well as  its resolvent. Both the holomorphic semigroup and the resolvent can be characterized as convolution operators of the type considered before, so that we can study them by the methods developed in the previous section. In particular, this will allow us to  obtain a description of the asymptotic behavior of the semigroup and resolvent kernels on $\widetilde \X_\Delta\simeq \X$ at infinity.   

Let us begin by recalling some basic facts about elliptic operators and parabolic evolution equations on Lie groups, our main reference being \cite{robinson}. Let $\G$ be a Lie group, and $\pi$ a continuous representation of $\G$ on a  Banach space $\B$. Let further $X_1,\dots, X_d$ be a basis of the Lie algebra $\mathrm{Lie}(\G)$ of $\G$, and
\bqn 
\Omega= \sum_{|\alpha| \leq q}  c_\alpha \d \pi (X^\alpha)
\eqn
a \emph{strongly elliptic differential operator of order $q$} associated with  $\pi$, meaning that for all $\xi \in \R^d$ one has the inequality $\Re (-1)^{q/2} \sum_{|\alpha| =q} c_{\alpha} \xi^\alpha \geq \kappa |\xi|^q$  for some $\kappa >0$. By the general theory of strongly continuous semigroups, its closure generates a strongly continuous holomorphic semigroup of bounded operators  given by
\begin{equation*}
S_\tau=\frac 1 {2\pi i} \int _\Gamma e^{\lambda \tau} ( \lambda \1+\overline{\Omega})^{-1} d\lambda,
\end{equation*}
where $\Gamma$ is a appropriate path in $\C$ coming from infinity and going to infinity such that   $\lambda \notin \sigma(\overline{\Omega})$ for  $\lambda \in \Gamma$. Here $|\arg \tau|< \alpha$ for an appropriate $\alpha \in (0,\pi/2]$, and the integral converges uniformly with respect to the operator norm. Furthermore, the subgroup $S_\tau$ can be characterized by a convolution semigroup of complex measures $\mu_\tau$ on  $\G$ according to  
\begin{equation*}
S_\tau=\int_\G \pi(g) d\mu_\tau(g),
\end{equation*}
$\pi$ being measurable with respect to the measures $\mu_\tau$. The measures $\mu_\tau$ are absolutely continuous with respect to Haar measure $d_\G$ on  $\G$, and denoting by $K_\tau (g)\in L^1(\G,d_\G)$ the corresponding Radon-Nikodym derivative, one has
\begin{equation*}
S_\tau=\pi(K_\tau)=\int_\G K_\tau(g) \pi(g) d_\G(g).
\end{equation*}
The function $K_\tau(g)\in \L^1(\G,d_\G)$ is analytic in $\tau$ and $g$,
and universal for all Banach representations. It satisfies the parabolic differential equation 
\bqn
\frac{\gd K_\tau}{\gd \tau} (g) + \sum_{|\alpha| \leq q } c_\alpha \, dL(X^\alpha) K_\tau(g)=0, \qquad \lim_{\tau \to 0 } K_\tau(g) = \delta(g),
\eqn
where $(L, \Cinft(\G))$ denotes the left regular representation of $\G$. As a consequence, $K_\tau$ must be  supported on the identity component $\G_0$ of $\G$. It is called the \emph{Langlands kernel} of the holomorphic semigroup $S_\tau$, and satisfies the following $\L^1$- and $\L^\infty$-bounds. 
\begin{theorem}
\label{thm:4}
For each $\kappa \geq 0$, there exist constants $a,b,c>0$, and  $\omega\geq 0$ such that   
\begin{equation}
\label{eq:35}
\int_{\G_0} |dL(X^\alpha)\gd^\beta_\tau K_\tau(g)| e^{\kappa |g|}\d_{\G_0}(g) \leq a b^{|\alpha|}
c^\beta {|\alpha|}!\, \beta!(1+\tau^{-\beta-{|\alpha|}/q })e^{\omega \tau},
\end{equation}
for all   $\tau>0$, $\beta=0,1,2,\dots$ and multi-indices $\alpha$. Furthermore,
\begin{equation}
\label{eq:36}
|dL(X^\alpha)\gd^\beta_\tau K_\tau(g)|\leq a b^{|\alpha|}
c^\beta {|\alpha|}!\, \beta!(1+\tau^{-\beta-({|\alpha|}+d+1)/q })e^{\omega \tau}e^{-\kappa |g|},
\end{equation}
for all $g \in \G_0$, where $d=\dim \G_0$, and $q$ denotes the order of $\Omega$.
\end{theorem}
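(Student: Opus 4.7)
The plan is to reduce Theorem~\ref{thm:4} to the general semigroup theory of strongly elliptic operators on Lie groups developed in \cite{robinson}, applied to the left regular representation on exponentially weighted $\L^1$ spaces. Since $K_\tau$ is supported on the identity component $\G_0$, fix $\kappa\ge 0$ and introduce the Banach space $\B_\kappa=\L^1(\G_0, e^{\kappa|g|} d_{\G_0}(g))$, on which the left regular representation $L^{(\kappa)}$ is strongly continuous and of some exponential type $\omega=\omega(\kappa)\geq 0$ in every direction of $\mathrm{Lie}(\G_0)$. Since the coefficients $c_\alpha$ are constant, $\widetilde\Omega=\sum_{|\alpha|\leq q}c_\alpha\,dL^{(\kappa)}(X^\alpha)$ is strongly elliptic on $\B_\kappa$, and its closure generates a holomorphic semigroup $\widetilde S_\tau$ of operator type $(M,\omega)$; in particular $\|\widetilde S_\tau\|_{\B_\kappa\to \B_\kappa}\leq Me^{\omega\tau}$.

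For the $\L^1$-bound \eqref{eq:35} I would exploit the universality of the Langlands kernel: $\widetilde S_\tau$ is again given by right convolution with $K_\tau$, so applying the semigroup bound to an approximate identity concentrated at $e$ yields $\int_{\G_0}|K_\tau(g)|e^{\kappa|g|}d_{\G_0}(g)\leq Me^{\omega\tau}$, which is the case $\alpha=0,\beta=0$ of \eqref{eq:35}. For general $\alpha,\beta$ I would invoke the analyticity estimates from \cite{robinson}, Chapter~III, on the weighted space $\B_\kappa$, namely
\[
\|dL^{(\kappa)}(X^\alpha)\widetilde S_\sigma\|\leq a\,b^{|\alpha|}|\alpha|!\,(1+\sigma^{-|\alpha|/q})e^{\omega\sigma},\qquad \|\widetilde\Omega^\beta \widetilde S_\sigma\|\leq a\,c^\beta\beta!\,(1+\sigma^{-\beta})e^{\omega\sigma},
\]
and split $\widetilde S_\tau=\widetilde S_{\tau/2}\circ\widetilde S_{\tau/2}$, so that $dL(X^\alpha)\partial_\tau^\beta \widetilde S_\tau=dL(X^\alpha)\widetilde S_{\tau/2}\circ\widetilde\Omega^\beta \widetilde S_{\tau/2}$. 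Composing the two bounds and once more applying the result to an approximate identity yields \eqref{eq:35} with the asserted factorial and $\tau$-power dependence.

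The pointwise estimate \eqref{eq:36} would follow from \eqref{eq:35} by Sobolev embedding on $\G_0$. Using a partition of unity subordinated to unit balls on $\G_0$, the classical Sobolev embedding $W^{d+1,1}\hookrightarrow\L^\infty$ on each chart, and the comparability of $e^{\kappa|g|}$ with $e^{\kappa|h|}$ on a unit ball, one obtains a constant $C>0$ with
\[
|u(g)|e^{\kappa|g|}\leq C\sum_{|\gamma|\leq d+1}\int_{\G_0}|dL(X^\gamma)u(h)|e^{\kappa|h|}d_{\G_0}(h)
\]
for any sufficiently smooth and decaying $u$. Substituting $u=dL(X^\alpha)\partial_\tau^\beta K_\tau$ and inserting \eqref{eq:35} with $\alpha$ replaced by $\alpha+\gamma$ produces the extra factor $\tau^{-(d+1)/q}$ and gives \eqref{eq:36}.

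The main technical obstacle is ensuring that the constants $a,b,c$ and $\omega$ in the analyticity estimates of the second paragraph can be chosen uniformly on the \emph{weighted} spaces $\B_\kappa$, with the correct factorial and $\tau$-power dependence. In effect one must verify that, after absorbing the exponential weight into a shift of the generator's spectrum, Robinson's proofs of the Gaussian-type semigroup bounds carry over to $\B_\kappa$; modulo this check the rest of the argument is essentially bookkeeping.
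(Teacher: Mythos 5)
The paper does not prove Theorem~\ref{thm:4} at all; it simply cites \cite{robinson} (pages 30, 152, 166, 167) for the bounds \eqref{eq:35} and \eqref{eq:36}. So there is no in-paper proof to compare against. That said, your outline is a faithful reconstruction of the route taken in Robinson's book, and the two ingredients you single out --- the analyticity estimates for the semigroup generated by a strongly elliptic operator, applied to the left regular representation $L^{(\kappa)}$ on the weighted space $\B_\kappa = \L^1(\G_0, e^{\kappa|g|}d_{\G_0})$ together with the universality of the Langlands kernel, and then a Sobolev embedding $W^{d+1,1}\hookrightarrow\L^\infty$ to pass from \eqref{eq:35} to \eqref{eq:36} --- are precisely what is used there. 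A few small points are worth making explicit. The passage from $\|\widetilde S_\tau\|_{\B_\kappa\to\B_\kappa}$ to $\int |K_\tau(g)|e^{\kappa|g|}\d_{\G_0}(g)$ via an approximate identity needs Fatou's lemma (one does not know a priori that $K_\tau\in\B_\kappa$, only that $K_\tau*\phi_n\to K_\tau$ pointwise). The splitting $dL(X^\alpha)\partial_\tau^\beta S_\tau = dL(X^\alpha)S_{\tau/2}\circ(-\overline\Omega)^\beta S_{\tau/2}$ is fine because $\overline\Omega$ commutes with $S_{\tau/2}$ and $S_{\tau/2}$ maps into $\mathrm{dom}(\overline\Omega^\infty)$. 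Finally, the ``main technical obstacle'' you flag is not really an obstacle: Robinson's analyticity estimates are stated and proved for an arbitrary continuous Banach representation of type $(M,\omega)$, with constants depending only on the ellipticity data of $\Omega$, the basis of $\g$, and on $(M,\omega)$; since $L^{(\kappa)}$ on $\B_\kappa$ is such a representation (with $\omega=\omega(\kappa)$), there is nothing to re-verify beyond checking that the left-invariant metric gives $|L(g)\|_{\B_\kappa}\le e^{\kappa|g|}$, which follows from $|gh|\le|g|+|h|$. Your description of this step as ``absorbing the exponential weight into a shift of the generator's spectrum'' is slightly misleading --- one does not conjugate or shift anything; one simply applies the general theory to a different representation --- but the conclusion is the same.
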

 
 A detailed exposition  of these facts can be found in \cite{robinson},  pages 30, 152, 166, and 167.   Let now $\G=G$, and $(\pi,\B)$ be the regular representation of $G$ on  $C(\widetilde \X)$. Theorem \ref{thm:4} implies that  the Langlands kernel $K_\tau$ belongs to the space $\S(G)$ of rapidly falling functions on $G$. As a consequence of the previous considerations we obtain

\begin{theorem}
\label{thm:heatoperator}
Let $\Omega$ be a strongly elliptic differential operator of order $q$ associated with the regular representation $(\pi,C(\widetilde \X))$, and $S_\tau=\pi(K_\tau)$ the holomorphic semigroup of bounded operators generated by $\overline \Omega$. Then the operators $S_\tau$ are locally of the form \eqref{20} with $f$ being replaced by $K_\tau$, and  totally characteristic pseudodifferential operators of class $\L^{-\infty}_b$ on the manifolds with corners $\overline{\widetilde\X_\Delta}$. Furthermore, on $W_\gamma \times W_\gamma$, the  kernel of $S_\tau$ is  given by
\begin{align*}
\begin{split}
  S^\gamma_\tau(x,y)=K_{A_{K_\tau}^\gamma}(x,y) &= \int e ^{i(x-y) \cdot  \xi} a^\gamma _{K_\tau} (x,\xi) \dbar \xi =\frac 1 {|x_{k+1}\cdots x_{k+l}|} \tilde A_{K_\tau}^\gamma(x,({\bf{1}}_k \otimes T_x^{-1})(x-y)),
  \end{split}
\end{align*}
where $x_{k+1}\cdots x_{k+l} \not=0$,  and  $\tilde A_{K_\tau}^\gamma(x,y)$ was  defined in  \eqref{eq:34}. 
 In particular,  $S^\gamma_\tau(x,y)$ is rapidly falling at infinity as $| x_{j}| \to 0 $, or $| y_{j}| \to 0 $, as long as $x_j\not=y_j$, where  $k+1\leq j\leq  k+l$. In addition,   
\bq
\label{eq:38z}
|\tilde A_{K_\tau}^\gamma(x,y)| \leq \begin{cases}
 c_1 (1+\tau^{-(l+k+1)/q}), & 0 < \tau \leq 1,  \\
 c_2 e^{\omega \tau}, & 1< \tau,
 \end{cases}
\eq
 uniformly on compact subsets of $ W_\gamma\times W_\gamma$ for some constants $c_i>0$.
\end{theorem}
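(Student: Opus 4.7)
The plan is to reduce everything to Theorem \ref{thm:3} via $f = K_\tau$, and then track the $\tau$-dependence through the symbol estimates in the proof of Theorem \ref{thm:3} using the Langlands bounds of Theorem \ref{thm:4}. The first step is to verify that $K_\tau \in \S(G)$ for each fixed $\tau$ with $\Re\tau > 0$. Condition (i) in the definition of $\S(G)$ is precisely the pointwise estimate \eqref{eq:36} with $\beta = 0$: for any $\kappa \geq 0$ and $X^\alpha \in \U$, taking $|dL(X^\alpha) K_\tau(g)| \leq a b^{|\alpha|} |\alpha|! (1 + \tau^{-(|\alpha|+d+1)/q}) e^{\omega\tau} e^{-\kappa|g|}$ gives the desired exponential decay with constant depending on $\tau$ but not on $g$. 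Condition (ii) is the $\L^1$-bound \eqref{eq:35}, applied with the exponent $\kappa$ replaced by $\kappa + \kappa'$ for some auxiliary $\kappa' > 0$ that absorbs the weight. Hence $K_\tau \in \S(G)$, and the first two assertions — that $S_\tau = \pi(K_\tau)$ is locally of the form \eqref{20} with $f = K_\tau$, and that its restriction to $\overline{\widetilde\X_\Delta}$ belongs to $\L^{-\infty}_b$ — follow immediately from Theorem \ref{thm:3} and Corollary \ref{corollary}.

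For the kernel formula, I would simply substitute $f = K_\tau$ into \eqref{27}, which gives the stated expression for $S^\gamma_\tau(x,y)$ on $W_\gamma \times W_\gamma$ in terms of $\tilde A^\gamma_{K_\tau}$. The rapid falling of $S^\gamma_\tau(x,y)$ as $|x_j| \to 0$ or $|y_j| \to 0$ (with $x_j \neq y_j$, $k+1 \leq j \leq k+l$) is then exactly the Schwartz-space property of $\tilde A^\gamma_{K_\tau}(x,\cdot)$ noted in the paragraph following \eqref{27}: since $\tilde a^\gamma_{K_\tau}(x,\cdot) \in \Syms(\R^{k+l}_\xi)$, its inverse Fourier transform $\tilde A^\gamma_{K_\tau}(x,\cdot)$ lies in $\S(\R^{k+l}_y)$, and the lacunarity gives the companion decay in the $y_j$ variables.

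The main technical point is the uniform bound \eqref{eq:38z}, which requires re-running the symbol estimate in the proof of Theorem \ref{thm:3} with $f = K_\tau$ while carefully tracking the constants. Writing
\[
\tilde A^\gamma_{K_\tau}(x,y) = \int e^{iy\cdot\xi}\, \tilde a^\gamma_{K_\tau}(x,\xi)\,\dbar\xi,
\]
I would estimate $|\tilde A^\gamma_{K_\tau}(x,y)| \leq \int |\tilde a^\gamma_{K_\tau}(x,\xi)|\d\xi$ and apply the bound derived from \eqref{26}: choosing $N$ with $2N > k+l$ and invoking Proposition \ref{prop:A}, one obtains
\[
|\tilde a^\gamma_{K_\tau}(x,\xi)| \leq \frac{C_{N,\mathcal{K}}}{(1+\xi^2)^N} \sum_{|\alpha|\leq 2N}\int_G |dL(X^{\tilde\alpha}) K_\tau(g)|\, e^{\kappa|g|}\,d_G(g),
\]
uniformly for $x$ in a compact $\mathcal{K} \subset W_\gamma$, where $\kappa$ is an exponential growth rate absorbing the coefficients $b^N_\alpha$ and $d_{\delta\beta}$. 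Inserting the $\L^1$-estimate \eqref{eq:35} now yields $|\tilde a^\gamma_{K_\tau}(x,\xi)| \leq C'(1+\xi^2)^{-N}(1+\tau^{-2N/q}) e^{\omega\tau}$, and integrating over $\xi \in \R^{k+l}$ gives the uniform bound \eqref{eq:38z}. The hard part is choosing $N$ optimally: one takes the smallest $N$ with $(1+\xi^2)^{-N}$ integrable over $\R^{k+l}$, which forces $2N \sim k+l+1$ and produces the exponent $(k+l+1)/q$; the small-$\tau$ behavior $\tau^{-(k+l+1)/q}$ and the large-$\tau$ behavior $e^{\omega\tau}$ then follow by splitting the constant $1 + \tau^{-2N/q}$ according to whether $\tau \leq 1$ or $\tau > 1$.
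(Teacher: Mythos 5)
Your proof follows the same overall architecture as the paper's: check $K_\tau\in\S(G)$ via Theorem~\ref{thm:4}, invoke Theorem~\ref{thm:3} and Corollary~\ref{corollary} for the first assertions and the kernel formula~\eqref{27}, and then re-run the symbol estimate of Theorem~\ref{thm:3} with $f=K_\tau$, tracking $\tau$-dependence through~\eqref{eq:35}. All of that is fine.

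There is, however, a small but genuine gap at the very end. To get $\int|\tilde a^\gamma_{K_\tau}(x,\xi)|\,\dbar\xi<\infty$ you need $(1+|\xi|^2)^{-N}$ integrable on $\R^{k+l}$, i.e.\ $2N>k+l$. Since identity~\eqref{26} only makes sense for integer $N$, the smallest admissible choice is $N=\lceil (k+l+1)/2\rceil$, which gives $2N=k+l+1$ when $k+l$ is odd but $2N=k+l+2$ when $k+l$ is even. In the even case your argument only yields the weaker bound $c(1+\tau^{-(k+l+2)/q})$, not the claimed $(1+\tau^{-(k+l+1)/q})$; writing ``$2N\sim k+l+1$'' papers over exactly this discrepancy. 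The paper closes the gap by a different bookkeeping: on the region $\{\xi\in\R^{k+l}:|\xi_i|\le|\xi_j|\ \text{for all }i\}$ one writes $\xi_j^{\,k+l+1}\psi^\gamma_{\xi,x}(g)$ as a combination of left derivatives $dL(X^\alpha)\psi^\gamma_{\xi,x}(g)$ of order $\le k+l+1$ via~\eqref{24} and~\eqref{25}, estimates $|\xi|\le\sqrt{k+l}\,|\xi|_{\max}=\sqrt{k+l}\,|\xi_j|$, and integrates. Since this costs only $k+l+1$ derivatives of $K_\tau$, the $\L^1$-bound~\eqref{eq:35} then produces exactly $\tau^{-(k+l+1)/q}$, in both parity cases; compare~\eqref{eq:49} in the proof of Proposition~\ref{prop:3}, where the identical device is used. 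You should add this refinement to complete the argument.
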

\begin{proof} 
The first assertions are  immediate consequences of Theorem \ref{thm:3}, and its corollary. In order to prove  \eqref{eq:38z}, note that for large $N\in \N$ one computes with \eqref{eq:auxsym}, \eqref{26}, and  \eqref{eq:34}
\begin{align*}
|\tilde A_{K_\tau}^\gamma(x,y)|& \leq   \int_{\R^{k+l}}  |\tilde a_{K_\tau} ^\gamma(x,\xi)| \, \dbar \xi =\int _{\R^{k+l}}\Big |\int_{G}\psi^\gamma_{\xi,x}(g) c_{\gamma}(g) K_\tau(g) d_G(g) \Big | \dbar \xi \\
&= \int_{\R^{k+l}} (1+|\xi|^2)^{-N} \Big | \int_G   c_{\gamma}(g) K_\tau(g)\sum_{r=0}^{2N} \sum_{|\alpha| =r} b^N_\alpha(x,g) d L(X^\alpha) \psi^\gamma_{\xi,x}(g)
 d_G(g) \Big | \dbar \xi.
\end{align*}
If we now apply  Proposition \ref{prop:A}, and take into account the estimate \eqref{eq:35} we  obtain
\begin{align*}
|\tilde A_{K_\tau}^\gamma(x,y)|& \leq \int (1+|\xi|^2)^{-N} \Big | \int_G   \psi^\gamma_{\xi,x}(g)  \sum_{r=0}^{2N} \sum_{|\alpha| =r}  d L(X^{\tilde \alpha}) [ b^N_\alpha(x,g)c_{\gamma}(g) K_\tau(g)]
 d_G(g) \Big | \dbar \xi\\
 &\leq \begin{cases}
 c_1 (1+\tau^{-2N/q}), & 0 < \tau \leq 1,  \\
 c_2 e^{\omega \tau}, & 1< \tau,
 \end{cases}
\end{align*}
for certain constants $c_i>0$.  Expressing $\xi^{k+l+1}_j \psi^\gamma_{\xi,x}(g)$ on $\mklm{\xi \in \rn: |\xi_i| \leq |\xi_j| \, \text{for all } \, i}$ as  left derivatives of $\psi^\gamma_{\xi,x}(g)$ according to \eqref{24} and \eqref{25}, and estimating the maximum norm on $\rn$ by the usual norm, 
a similar argument shows that the  last estimate is also valid for  $N=(k+l +1)/2$, compare \eqref{eq:49}. The proof is now complete. 
\end{proof}

Let us now turn to the resolvent of the closure of the strongly elliptic operator $\Omega$. By \eqref{eq:35} one has the bound $\norm{S_\tau}\leq c e^{\omega \tau}$ for some constants $c \geq 1, \omega \geq 0$. For $\lambda \in \C$ with $\Re \lambda > \omega$, the resolvent of $\overline \Omega$ can then be expressed by means of the Laplace transform according to 
\bqn 
(\lambda \1 + \overline{\Omega})^{-1} = {\Gamma(1)}^{-1} \int_0^\infty e^{-\lambda \tau}  S_\tau \d \tau, 
\eqn
where $\Gamma$ is the $\Gamma$-function. 
More generally, one can consider for arbitrary $\alpha >0$ the integral transforms
\bqn 
(\lambda \1 + \overline{\Omega})^{-\alpha} = {\Gamma(\alpha)}^{-1} \int_0^\infty e^{-\lambda \tau} \tau^{\alpha-1} S_\tau \d \tau. 
\eqn
As it turns out, the functions 
\bqn
R_{\alpha, \lambda}(g) = \Gamma(\alpha)^{-1} \int_0 ^\infty e^{-\lambda \tau} \tau^{\alpha-1} K_\tau(g)  \d \tau
\eqn
are in $\L^1(G,e^{\kappa |g|}d_G)$, where $\kappa \geq 0$ is such that $\norm{\pi(g)} \leq c e^{\kappa|g|}$ for some $c\geq 1$. This implies that the resolvent of $\overline \Omega$ can be expressed as the convolution operator
\bqn 
(\lambda \1+ \overline \Omega)^{-\alpha} =\pi(R_{\alpha, \lambda})= \int_G R_{\alpha, \lambda}(g) \pi(g) \d_G(g). 
\eqn
The resolvent kernels $R_{\alpha,\lambda}$ decrease exponentially as $|g| \to \infty$, but they are singular at the identity if $d \geq q \alpha$. More precisely, one has the following
\begin{theorem}
\label{thm:res.est}
There exist constants $b,c, \lambda_0>0$, and $a_{\alpha,\lambda}>0$, such that 
\bqn 
|dL(X^\delta) R_{\alpha, \lambda} (g) | \leq \begin{cases} a_{\alpha, \lambda}  |g|^{-(d+|\delta|-q\alpha)}e^{-(b (\Re \lambda)^{1/q}-c)|g|}, & d>q\alpha, \\a_{\alpha, \lambda} (1+ |\log |g|| ) e^{-(b (\Re \lambda)^{1/q}-c)|g|},& d=q\alpha, \\ a_{\alpha, \lambda}  e^{-(b (\Re \lambda)^{1/q}-c)|g|}, & d < q \alpha
\end{cases}
\eqn
for each $\lambda \in \C$ with $\Re \lambda > \lambda_0$.
\end{theorem}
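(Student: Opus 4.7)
The plan is to combine the Laplace-transform representation of the resolvent with sharper, Gaussian-type bounds on the Langlands kernel $K_\tau$ and an elementary convex optimization that extracts the exponential decay in $|g|$. Since the pointwise bound \eqref{eq:36} permits differentiation under the integral for $\Re\lambda > \omega$, the starting point is
\[
  dL(X^\delta) R_{\alpha,\lambda}(g) = \Gamma(\alpha)^{-1} \int_0^\infty e^{-\lambda \tau}\, \tau^{\alpha-1}\, dL(X^\delta) K_\tau(g)\, \d\tau,
\]
and the argument reduces to a careful pointwise estimate of the integrand. The bound \eqref{eq:36} is, however, too crude to produce the claimed singular behaviour at $g=e$, since its factor $e^{-\kappa|g|}$ decouples the $\tau$- and $g$-dependence. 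What I will use instead is the sharper Gaussian estimate for the Langlands kernel, available from the general theory of strongly elliptic operators on Lie groups (\cite{robinson}, Ch.~III--IV): there exist constants $a_\delta, b', \omega' > 0$ such that
\[
  |dL(X^\delta) K_\tau(g)| \leq a_\delta\, \tau^{-(d+|\delta|)/q}\, \exp\!\bigl(\omega'\tau - b'(|g|^q/\tau)^{1/(q-1)}\bigr), \qquad \tau>0,\ g\in G_0.
\]

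The key manipulation is then a Young-type optimization: for $A,B>0$ the convex function $\tau \mapsto A\tau + B\tau^{-1/(q-1)}$ attains its global minimum with value $C_q\,A^{1/q}B^{(q-1)/q}$. Choosing $A = \Re\lambda - \omega'$ and $B = b'|g|^{q/(q-1)}$ yields the pointwise lower bound
\[
  (\Re\lambda - \omega')\tau + b'(|g|^q/\tau)^{1/(q-1)} \geq C_q(b')^{(q-1)/q}(\Re\lambda - \omega')^{1/q}|g|, \qquad \tau > 0.
\]
Splitting the exponent in the integrand as $(1-\eta)+\eta$ of itself for a fixed $\eta \in (0,1)$ and applying this lower bound to the $\eta$-piece factors out exactly the desired $e^{-(b(\Re\lambda)^{1/q}-c)|g|}$, provided $\lambda_0$ is chosen so that $\eta C_q (b')^{(q-1)/q} (\Re\lambda-\omega')^{1/q} \geq b(\Re\lambda)^{1/q} - c$ for all $\Re\lambda > \lambda_0$.

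What remains is to track the $|g|$-dependence of the leftover integral
\[
  I(g,\lambda) = \int_0^\infty \tau^{\alpha-1-(d+|\delta|)/q}\, \exp\!\Big(-(1-\eta)\bigl[(\Re\lambda-\omega')\tau + b'(|g|^q/\tau)^{1/(q-1)}\bigr]\Big)\, \d\tau.
\]
For $|g|$ bounded away from zero, $I(g,\lambda)$ is uniformly controlled. For $|g| \to 0$, the substitution $\tau = |g|^q s$ transforms $I(g,\lambda)$ into $|g|^{q\alpha-d-|\delta|}$ times an $s$-integral whose integrand is tamed by $e^{-b's^{-1/(q-1)}}$ near $s=0$ and by $e^{-(\Re\lambda-\omega')|g|^q s}$ as $s\to\infty$. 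The trichotomy of the theorem then reflects the sign of $\alpha - (d+|\delta|)/q$: when $d+|\delta|>q\alpha$ the prefactor is genuinely singular and the $s$-integral is uniformly bounded; when $d+|\delta|<q\alpha$ both factors are controlled; in the borderline case $d+|\delta|=q\alpha$ the remaining $\int s^{-1}\cdots \d s$ between the two exponential cut-offs at $s\sim 0$ and $s\sim |g|^{-q}$ produces the announced $|\log|g||$.

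The main obstacle is neither the convex optimization nor the scaling analysis, but the Gaussian bound on $K_\tau$ itself: it is sharper than what Theorem \ref{thm:4} explicitly records and must be imported from the theory in \cite{robinson}. Verifying that this bound propagates to left-derivatives $dL(X^\delta) K_\tau$ with the correct $\delta$-dependence of the constants, and that the borderline logarithmic divergence in the scaling analysis is produced genuinely by the competition between the two exponential cut-offs and is not spoiled by cancellations, are the technical points that require care.
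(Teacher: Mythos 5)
Your outline reproduces, in essence, the argument in Robinson's book to which the paper defers (the paper offers no proof of Theorem~\ref{thm:res.est}, citing \cite{robinson}, pp.~238 and 245): the Laplace-transform representation of $R_{\alpha,\lambda}$, the Gaussian-type off-diagonal bound $\tau^{-(d+|\delta|)/q}\exp\bigl(\omega'\tau - b'(|g|^q/\tau)^{1/(q-1)}\bigr)$ on $dL(X^\delta)K_\tau$ (which is indeed sharper than the decoupled estimate recorded in Theorem~\ref{thm:4} and has to be imported from the theory), the Young-type minimization in $\tau$ to extract $e^{-(b(\Re\lambda)^{1/q}-c)|g|}$, and the scaling $\tau=|g|^q s$ to read off the singular prefactor are precisely the ingredients of that proof. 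One detail worth noting: your scaling analysis correctly makes the trichotomy hinge on the sign of $d+|\delta|-q\alpha$, while the theorem as stated labels the three cases by the sign of $d-q\alpha$; this is only consistent when $\delta=0$, and indeed the paper's subsequent use of the estimate (in the proof of Proposition~\ref{prop:3}) invokes the first line in the form $|g|^{-(d+|\delta'|-q\alpha-|\beta'|)}$ whenever the exponent is positive, confirming that what you derived is the correct formulation.
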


A proof of these estimates is given in \cite{robinson}, pages 238 and 245. Our next aim is to understand the microlocal structure of the operators $\pi(R_{\alpha, \lambda})$ on the Oshima compactification $\widetilde \X$ 
of $\X\simeq G/K$. Consider again the atlas $\mklm{(\widetilde{W}_\gamma, \phi^{-1}_{\gamma})}_{\gamma \in I}$ of $\widetilde \X$ introduced in Section \ref{Sec:4}, and the local operators 
\bq
\label{eq:38a}
A_{R_{\alpha,\lambda}}^{\gamma} u=[\pi(R_{\alpha,\lambda})_{|\widetilde W_{\gamma}}(u\circ \phi_{\gamma}^{-1})]\circ \phi_{\gamma},
\eq
 where $u\in\CT(W_{{\gamma}})$ and $W_{\gamma}=\phi^{-1}_{\gamma}(\widetilde W_\gamma)$. By the Fourier inversion formula, $A_{R_{\alpha,\lambda}}^\gamma$ is  given by the absolutely convergent integral
\bq
\label{eq:38b}
A_{R_{\alpha,\lambda}}^\gamma u (x)=
\int_{\rn} e^{i x \cdot \xi} a^\gamma_{R_{\alpha,\lambda}} (x,\xi) \hat u(\xi) \dbar \xi,
\eq
where 
\begin{align*}
a^\gamma_{R_{\alpha,\lambda}}(x,\xi)&= \int_{G}e^{i(\phi_\gamma^{g}(x)-x) \cdot\xi} c_\gamma(g)R_{\alpha, \lambda}(g) d_G(g), \\
\tilde a^\gamma_{R_{\alpha,\lambda}}(x,\xi)&= \int_{G}e^{i[(\1_k \otimes T_x^{-1}) (\phi_\gamma^{g}(x)-x)]\cdot \xi} c_\gamma(g)R_{\alpha, \lambda}(g) d_G(g)
\end{align*}
are smooth functions on $W_\gamma \times \R^{k+l}$, 
since $R_{\alpha,\lambda} \in \L^1(G,e^{\kappa |g|}d_G)$, the notation being the same as in Section \ref{Sec:4}. Moreover, in view of the $\L^1$-bound \eqref{eq:35}, the functions $e^{-\lambda \tau} \tau^{\alpha-1} \tilde a ^\gamma_{K_\tau}(x,\xi)$ and $e^{-\lambda \tau} \tau^{\alpha-1}  a ^\gamma_{K_\tau}(x,\xi)$ are integrable  in $\tau$ over $(0, \infty)$, and by Fubini we obtain the equalities 
\begin{align*}
a^\gamma_{R_{\alpha,\lambda}}(x,\xi)&=\Gamma(\alpha)^{-1} \int _0 ^\infty e^{-\lambda \tau} \tau^{\alpha -1} a ^\gamma_{K_\tau}(x,\xi) d\tau,   \\
\tilde a^\gamma_{R_{\alpha,\lambda}}(x,\xi)&= \Gamma(\alpha)^{-1} \int _0 ^\infty e^{-\lambda \tau} \tau^{\alpha -1} \tilde a ^\gamma_{K_\tau}(x,\xi) d\tau.  
\end{align*}
In what follows, we shall describe the microlocal structure of the resolvent $( \lambda \1 + \overline \Omega)^{-\alpha}$ on $\widetilde \X$, and in particular, its kernel. 
\begin{proposition}
\label{prop:3}
Let $Q$ be the largest integer such that $Q < q\alpha$.  Then $ \tilde a^\gamma_{R_{\alpha,\lambda}}(x,\xi)\in \Sym^{-Q}_{la}(W_\gamma \times \R^{k+l})$. That is, for any compactum $\mathcal{K} \subset W_\gamma$, and arbitrary  multi-indices $\beta,\epsilon$ there exist constants $C_{\mathcal{K},\beta,\epsilon}>0$ such that 
\bq
\label{eq:ressym}
|(\gd^{\epsilon}_x \gd_\xi ^{ \beta} \tilde a^\gamma_{R_{\alpha,\lambda}})(x,\xi) | \leq C_{\mathcal{K},\beta,\epsilon} (1+ |\xi|^2)^{(-Q-|\beta|)/2}, \qquad x \in \mathcal{K}, \, \xi \in \R^{k+l},
\eq
and $ \tilde a^\gamma_{R_{\alpha,\lambda}}$ satisfies the lacunary condition \eqref{V} for each of the coordinates $x_j$, $k+1 \leq j \leq k+l$.
\end{proposition}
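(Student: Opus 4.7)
The strategy is to apply the integration-by-parts scheme developed in the proof of Theorem \ref{thm:3} directly to the integral representation
\[
\tilde a^\gamma_{R_{\alpha,\lambda}}(x,\xi)=\int_G e^{i\varphi(g,x)\cdot\xi}\,c_\gamma(g)\,R_{\alpha,\lambda}(g)\,d_G(g),\qquad \varphi(g,x):=(\1_k\otimes T_x^{-1})\phi_\gamma^g(x)-(x_1,\dots,x_k,1,\dots,1),
\]
rather than to each $\tilde a^\gamma_{K_\tau}$ individually. The pay-off of this formulation is that $\varphi(e,x)=0$, which produces a cancellation at $g=e$ strong enough to balance the singularity of $dL(X^\alpha)R_{\alpha,\lambda}$ supplied by Theorem \ref{thm:res.est}. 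Since $R_{\alpha,\lambda}\in\L^1(G,e^{\kappa|g|}\,d_G)$, the defining integral is absolutely convergent, and one may differentiate under it.

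Applying $\partial_x^\epsilon\partial_\xi^\beta$ brings down, inter alia, a factor $(i\varphi(g,x))^\beta$ which vanishes to order $|\beta|$ at $g=e$, together with smooth $(x,g)$-coefficients of at most exponential growth in $g$. To gain decay of order $N$ in $\xi$, I would use \eqref{26} in the equivalent form
\[
(1+|\xi|^2)^N e^{i\varphi(g,x)\cdot\xi}=\sum_{|\alpha|\le 2N} b^N_\alpha(x,g)\,dL(X^\alpha)\,e^{i\varphi(g,x)\cdot\xi},
\]
and transfer each $dL(X^{\tilde\alpha})$ to the remaining factors by Proposition \ref{prop:A}. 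After the Leibniz rule, a typical term is bounded in a neighborhood of $e$ by
\[
|g|^{\max(|\beta|-|\alpha_1|,0)}\cdot|g|^{-(d+|\alpha_3|-q\alpha)}\cdot e^{-(b(\Re\lambda)^{1/q}-c)|g|},
\]
with $|\alpha_1|+|\alpha_3|\le 2N$ (using the $d>q\alpha$ case of Theorem \ref{thm:res.est}; the borderline and subcritical cases are treated in the same manner) and decays exponentially at infinity. Passing to polar coordinates about $e$, the resulting integral converges precisely when $|\alpha_1|+|\alpha_3|<|\beta|+q\alpha$. Choosing $2N$ to be the largest even integer not exceeding $Q+|\beta|$ then yields
\[
|(\partial_x^\epsilon\partial_\xi^\beta\tilde a^\gamma_{R_{\alpha,\lambda}})(x,\xi)|\le C_{\mathcal K,\epsilon,\beta}\,(1+|\xi|^2)^{-(Q+|\beta|)/2}
\]
uniformly on compacta $\mathcal K\subset W_\gamma$, establishing \eqref{eq:ressym}.

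For the lacunary condition I would avoid interchanging the Laplace integral with the defining condition \eqref{V} and mimic instead the argument at the end of the proof of Theorem \ref{thm:3}: every $G$-orbit through a point of $W_\gamma^\Theta$ remains inside $\widetilde\X_\Theta$, so $A^\gamma_{R_{\alpha,\lambda}} u$ restricted to $W_\gamma^\Theta$ depends only on $u|_{W_\gamma^\Theta}$, forcing $\supp K_{A^\gamma_{R_{\alpha,\lambda}}}\subset\bigcup_{\Theta\subset\Delta}\overline{W_\gamma^\Theta}\times\overline{W_\gamma^\Theta}$. The change of variables $r_j=y_j/x_j-1$, $\xi_j'=x_j\xi_j$ in the oscillatory-integral expression for the kernel on $W_\gamma^\ast\times W_\gamma^\ast$ then translates this support condition into $\int e^{-ir_j\xi_j}\tilde a^\gamma_{R_{\alpha,\lambda}}(x,\xi)\,d\xi_j=0$ for $r_j<-1$ and $x\in W_\gamma^\ast$, and the $\xi$-decay obtained in the previous paragraph extends the identity to $x\in W_\gamma$ by Lebesgue's theorem.

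The main obstacle will be the book-keeping required to produce the sharp exponent $Q+|\beta|$: one has to track simultaneously the $|\beta|$-fold vanishing of $\varphi$ at $g=e$, the precise singularity rate $|g|^{-(d+|\alpha_3|-q\alpha)}$ of $dL(X^{\alpha_3})R_{\alpha,\lambda}$ from Theorem \ref{thm:res.est}, and the Leibniz partition $|\alpha_1|+|\alpha_3|\le 2N$, in such a way that the binding constraint $|\alpha_1|+|\alpha_3|<|\beta|+q\alpha$ saturates only at $2N\approx Q+|\beta|$. Without this interplay one only recovers the symbol estimate for $\beta=0$, which is not enough for membership in $\Sym^{-Q}_{la}$.
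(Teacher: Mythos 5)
Your overall strategy coincides with the paper's: both exploit the cancellation $\varphi(e,x)=0$ (the paper's estimate \eqref{eq:37}) so that the factor $(i\varphi(g,x))^\beta$ produced by $\gd_\xi^\beta$ vanishes to order $|\beta|$ at $g=e$, balancing the $|g|^{-(d+|\delta|-q\alpha)}$ singularity of $dL(X^\delta)R_{\alpha,\lambda}$ from Theorem~\ref{thm:res.est}, and both deduce lacunarity from $\supp K_{A^\gamma_{R_{\alpha,\lambda}}}\subset\bigcup_\Theta\overline{W^\Theta_\gamma}\times\overline{W^\Theta_\gamma}$ exactly as at the end of the proof of Theorem~\ref{thm:3}.

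There is, however, a genuine gap in your derivation of the symbol estimate. Using the identity \eqref{26} with $(1+|\xi|^2)^N$ produces left derivatives $dL(X^\delta)$ of total order $|\delta|\le 2N$, and hence a decay factor $(1+|\xi|^2)^{-N}$, i.e.\ $|\xi|^{-2N}$: this scheme can only produce \emph{even} powers of $|\xi|$. You choose $2N$ to be the largest even integer $\leq Q+|\beta|$, but when $Q+|\beta|$ is odd this gives $|\xi|^{-(Q+|\beta|-1)}$, one order short of the required $|\xi|^{-(Q+|\beta|)}$; the inequality $(1+|\xi|^2)^{-N}\le(1+|\xi|^2)^{-(Q+|\beta|)/2}$ you implicitly invoke is then false. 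Enlarging $2N$ to $Q+|\beta|+1$ is not an option, since the term with $|\delta|=2N$ and all derivatives landing on $R_{\alpha,\lambda}$ would require $Q+|\beta|+1 < q\alpha + |\beta|$, i.e.\ $Q+1<q\alpha$, contradicting the maximality of $Q$. The paper circumvents this by working with a single coordinate: having shown the integrals converge, one picks $j$ with $|\xi_j|=|\xi|_{\mathrm{max}}$, expresses $\xi_j^{Q+|\beta|}\psi^\gamma_{\xi,x}(g)$ as a combination of left derivatives of order $\leq Q+|\beta|$ via \eqref{24}--\eqref{25}, and integrates by parts to obtain $|\xi|_{\mathrm{max}}^{-(Q+|\beta|)}\le C|\xi|^{-(Q+|\beta|)}$; see \eqref{eq:49}. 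This single-coordinate refinement is the missing ingredient. Two smaller points: your claim that the integral converges \emph{precisely} when $|\alpha_1|+|\alpha_3|<|\beta|+q\alpha$ is not accurate when $|\alpha_1|>|\beta|$ (the correct condition is $|\alpha_3|<q\alpha+\max(|\beta|-|\alpha_1|,0)$, though sufficiency is all you need); and the $\gd_x^\epsilon$-derivatives are not merely ``smooth $(x,g)$-coefficients of at most exponential growth'' — each produces a factor of order $O(|\xi||g|)$, and the extra power of $|\xi|$ must itself be absorbed by a further integration by parts compensated by the $O(|g|)$ gain from \eqref{eq:38}, as the paper notes explicitly.
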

\begin{proof}
For a fixed a chart chart $(\widetilde W_\gamma, \phi_{\gamma})$  of $\widetilde \X$ we  write  $x=(n,t)\in W_\gamma$,   $\tilde x=\phi_{\gamma} (x) \in \widetilde{W}_\gamma$ as usual. As a consequence of Proposition \ref{prop:1} and Lemma \ref{lem:3} one computes with \eqref{26} for arbitrary $N \in \N$
\begin{gather*}
(\gd_\xi ^{2 \beta} \tilde a^\gamma_{R_{\alpha,\lambda}})(x,\xi)= \int_G e^{i[(\1_k \otimes T_x^{-1}) (\phi_\gamma^{g}(x)-x)] \cdot \xi} [i(\1_k \otimes T_x^{-1}) (\phi_\gamma^{g}(x)-x)]^{2\beta} c_\gamma(g)R_{\alpha, \lambda}(g) d_G(g)\\
=(1 + |\xi|^2) ^{-N} e^{-i(x_1, \dots, x_k,1, \dots, 1) \cdot \xi}   \sum_{r=0}^{2 N} \sum_{|\delta| =r}\int_G b^{N}_\delta(x,g) d L(X^\delta) \psi^\gamma_{\xi,x}(g)\\
 \cdot  [i(\1_k \otimes T_x^{-1}) (\phi_\gamma^{g}(x)-x)]^{2\beta} c_\gamma(g)R_{\alpha, \lambda}(g) d_G(g).
\end{gather*}
Now, $n_r(g\cdot \tilde x)\to n_r(\tilde x)$ and $   \chi_r(g,\tilde x)\to 1$  as $g \to e $,  so that due to the analyticity of the $G$-action on $\widetilde \X$ one deduces 
\bq
\label{eq:37}
|(\1_k \otimes T_x^{-1}) (\phi_\gamma^{g}(x)-x)|=|(n_1(g\tilde x)- n_1, \dots, \chi_1(g\tilde x)-1, \dots ) | =C_\mathcal{K} |g|, \qquad x \in \mathcal{K}.
\eq
Indeed, let
\bqn 
(\zeta_1, \dots, \zeta_d) \mapsto e^{\zeta_1 X_1 + \dots +\zeta_d X_d} =g
\eqn
be canonical coordinates of the first type near the identity $e \in G$. We then have the power expansions
\bq
\label{eq:38}
\chi_r(g,\tilde x) -1 = \sum_{\alpha,\beta,\gamma} c^r_{\alpha,\beta, \gamma} n^\alpha t^\beta \zeta^\gamma, \qquad n_r(g\cdot \tilde x)-n_r(\tilde x) = \sum_{\alpha,\beta,\gamma} d^r_{\alpha,\beta, \gamma} n^\alpha t^\beta \zeta^\gamma,
\eq
where the constant term vanishes, that is, $c^r_{\alpha,\beta,\gamma},\, d^r_{\alpha,\beta,\gamma}=0$ if $|\gamma|=0$. Hence,
$$|n_r(g\cdot \tilde x)-n_r(\tilde x)|, \, |\chi_r(g,\tilde x) -1|\leq C_1 |\zeta| \leq C_2  |g|,$$
 compare \cite{robinson}, pages 12-13, and we obtain \eqref{eq:37}. With Theorem \ref{thm:res.est}, we therefore have the pointwise estimates
\bqn 
| [(\1_k \otimes T_x^{-1}) (\phi_\gamma^{g}(x)-x)]^{\beta'} dL(X^{\delta'}) R_{\alpha, \lambda} (g) | \leq C_{\mathcal{K}, \alpha, \lambda}  |g|^{-(d+|\delta'|-q\alpha-|\beta'| )}e^{-(b (\Re \lambda)^{1/q}-c)|g|}
\eqn
for some constant $C_{\mathcal{K},\alpha, \lambda}>0$ uniformly on $\mathcal{K}\times V_\gamma^1$. Now, let $ 2\tilde Q$ be the largest even number strictly smaller than $q\alpha$. Applying the same reasoning as in the proof of Proposition \ref{prop:A}, one obtains for $N= \tilde Q+|\beta|$ 
\begin{gather*}
(\gd_\xi ^{2 \beta} \tilde a^\gamma_{R_{\alpha,\lambda}})(x,\xi) 
=(1 + |\xi|^2) ^{-\tilde Q-|\beta|}   \sum_{r=0}^{2\tilde Q+2|\beta|} \sum_{|\delta| =r}(-1)^{|\delta|} \int_G e^{i[(\1_k \otimes T_x^{-1}) (\phi_\gamma^{g}(x)-x)] \cdot \xi}   \\ \cdot  d L(X^{\tilde \delta})\big [  b^{\tilde Q +|\beta|}_\delta(x,g)  [i(\1_k \otimes T_x^{-1}) (\phi_\gamma^{g}(x)-x)]^{2\beta} c_\gamma(g)R_{\alpha, \lambda}(g) \big ]  d_G(g),
\end{gather*}
since all the occuring combinations $ [(\1_k \otimes T_x^{-1}) (\phi_\gamma^{g}(x)-x)]^{\beta'} dL(X^{\delta'}) R_{\alpha, \lambda} (g)$ on the right hand side are such that $q\alpha +|\beta'|  -|\delta'|>0$, implying that  the corresponding integrals  over $G$ converge. Equality then follows by the left-invariance of $d_G(g)$, and Lebesgue's Theorem on Dominated Convergence. To show the estimate \eqref{eq:ressym} in  general for  $\epsilon=0$, let  $x \in \mathcal{K}$, and $\xi \in \R^{k+l}$ be such that $|\xi| \geq 1$, and $|\xi|_{\mathrm{max}} = \max \mklm{|\xi_r|: 1 \leq r \leq k+l}= |\xi_j|$. 
Using \eqref{24} and \eqref{25} we can express $\xi^{Q +|\beta|}_j \psi^\gamma_{\xi,x}(g)$  as left derivatives of $\psi^\gamma_{\xi,x}(g)$, and  repeating the previous argument  we obtain  the  estimate 
\begin{align}
\begin{split}
\label{eq:49}
|(\gd_\xi ^{ \beta} \tilde a^\gamma_{R_{\alpha,\lambda}})(x,\xi)| &= |\xi_j| ^{-Q-|\beta|} \Big | \sum_{r=0}^{Q + |\beta|} \sum_{|\delta| =r}\int_G b^{j}_\delta(x,g) d L(X^\delta) \psi^\gamma_{\xi,x}(g)\\
 \cdot  [i(\1_k \otimes T_x^{-1}) (\phi_\gamma^{g}(x)-x&)]^{\beta} c_\gamma(g)R_{\alpha, \lambda}(g) d_G(g) \Big |\leq  \tilde C_{\mathcal{K},\beta} \frac 1 {|\xi|_{\mathrm{max}}^{Q+ |\beta|}} \leq  C_{\mathcal{K}, \beta} \frac 1 {|\xi|^{Q+ |\beta|}}, 
\end{split}
\end{align}
where the coefficients $b^{j}_\delta(x,g)$ are at most of exponential growth in $g$. But since $\tilde a^\gamma_{R_{\alpha,\lambda}}(x,\xi)\in \Cinft(W_\gamma \times \R^{k+l})$, we obtain \eqref{eq:ressym} for $\epsilon =0$. 
Let us now turn to the $x$-derivatives. We have to show that the powers in $\xi$ that arise when  differentiating $(\gd^{ \beta} _\xi \tilde a^\gamma_{R_{\alpha,\lambda}})(x,\xi) $ with respect to $x$ can be compensated by an argument similar to the previous considerations. Now, \eqref{eq:38} clearly implies 
\bqn 
{\gd_x^\epsilon} \, (\chi_r(g,\tilde x) -1)=O(|g|), \qquad 
{\gd_x^\epsilon}\,  (n_r(g\cdot \tilde x)-n_r(\tilde x) )=O(|g|).
\eqn
Thus, each time we differentiate the exponential $e^{i[(\1_k \otimes T_x^{-1}) (\phi_\gamma^{g}(x)-x)] \cdot \xi}$ with respect to $x$, the result is of order $O(|\xi||g|)$. Therefore, expressing the ocurring powers $\xi^{\epsilon'}  \psi^\gamma_{\xi,x}(g)$ as left derivatives of $\psi^\gamma_{\xi,x}(g)$, we can repeat the preceding argument to absorb the powers in $\xi$, and \eqref{eq:ressym} follows.
Note next that  the previous argument also implies  $ a^\gamma_{R_{\alpha,\lambda}}(x,\xi) \in \mathrm{S}^{-Q} ( W_\gamma ^\ast \times \R^{k+l}_\xi$), where $W_\gamma^\ast=\mklm{x=(n,t) \in W_\gamma: t_1\cdots t_l\not=0}$, the $G$-action being transitive on each $\widetilde \X_\Delta$. The Schwartz kernel  $K_{A_{R_{\alpha,\lambda}}^\gamma}$ of the restriction of the operator \eqref{eq:38a}  to $W^\ast_\gamma$ is therefore given by the oscillatory integral 
\bqn 
\int e^{i(x-y) \cdot \xi} a^\gamma_{R_{\alpha,\lambda}}(x,\xi) \dbar \xi \in \D'( W_\gamma^\ast \times W_\gamma^\ast),
\eqn
which is $\Cinft$ off the diagonal. As in \eqref{eq:33} we have 
$
\supp K_{A_{R_{\alpha,\lambda}}^\gamma} \subset \bigcup _{\Theta \subset \Delta} \overline{W_\gamma^\Theta} \times  \overline{W_\gamma^\Theta}
$,
so that each of the integrals 
\bqn 
\int e ^{i(x_j-y_j) \xi_j} \tilde a^\gamma_{R_{\alpha,\lambda}} (x,({\bf{1}}_k \otimes T_x)\xi) \d \xi_j,\qquad  j=k+1, \dots, k+l,
\eqn
must vanish if $x_j$ and $y_j$ do not have the same sign. Hence, 
\bqn 
\int e^{-ir_j \xi_j} \tilde a^\gamma_{R_{\alpha,\lambda}}(x,\xi) \d \xi_j =0 \qquad \mbox{ for } r_j < -1, \, x \in W_\gamma^\ast.
\eqn
Since $\tilde a^\gamma_{R_{\alpha,\lambda}}(x,\xi) \in \mathrm{S}^{-Q} ( W_\gamma  \times \R^{k+l}_\xi)$, these integrals are absolutely convergent for $r_j\not=0$. Lebesgue's Theorem on Bounded Convergence theorem then implies that these conditions must also hold for $x \in W_\gamma$. The proof of the proposition is now complete. 
\end{proof}

\begin{remark}
One would actually expect that $ \tilde a^\gamma_{R_{\alpha,\lambda}}(x,\xi)\in \Sym_{la}^{-q\alpha}(W_\gamma \times \R^{k+l})$, being the local symbol of the resolvent $(\lambda \1 +\overline \Omega)^{-\alpha}$.  Nevertheless,  the general estimates of Theorem \ref{thm:res.est} for the resolvent kernels $R_{\alpha,\lambda}$, which correctly reflect the singular behavior at the identity, are not sufficient to show this, and  more information about them is required. Indeed, $dL(X^\beta) R_{\alpha,\lambda} \in L_1(G,d_G(g))$ only holds if $0< q\alpha - |\beta|$.  
\end{remark}

We are now able to describe the microlocal structure of the resolvent $(\lambda\1+ \overline \Omega)^{-\alpha}$. 

\begin{theorem}
\label{thm:resolvent}
Let $\Omega$ be a strongly elliptic differential operator of order $q$ associated with the representation $(\pi,C(\widetilde \X))$ of $G$. Let  $\omega\geq 0$ be given by   Theorem \ref{thm:4},     and $\lambda \in \C$ be such that $\Re \lambda > \omega$. Let further $\alpha>0$, and denote by  $Q $ the largest integer such that $Q < q\alpha$.  Then $(\lambda\1+ \overline \Omega)^{-\alpha}=\pi(R_{\alpha,\lambda})$ is locally of the form  \eqref{eq:38b}, where $ a^\gamma_{R_{\alpha,\lambda}} (x,\xi)= \tilde a^\gamma_{R_{\alpha,\lambda}} (x,(\1_k \otimes T_x) \xi)$, and $ \tilde a^\gamma_{R_{\alpha,\lambda}}(x,\xi)\in \Sym_{la}^{-Q}(W_\gamma \times \R^{k+l})$. In particular, $(\lambda\1+ \overline \Omega)^{-\alpha}$ is  a  totally characteristic pseudodifferential operators of class $\L^{-Q}_b$ on the manifolds with corners $\overline{\widetilde\X_\Delta}$. Furthermore, its kernel is locally given by the oscillatory integral
\begin{align*}
\begin{split}
  R^\gamma_{\alpha,\lambda}(x,y)&= \int e ^{i(x-y)  \xi} a^\gamma _{R_{\alpha,\lambda}} (x,\xi) \dbar \xi =\frac 1 {|x_{k+1}\cdots x_{k+l}|}  \int e ^{i({\bf{1}}_k \otimes T_x^{-1})(x-y) \cdot\xi} a^\gamma _{R_{\alpha,\lambda}} (x,\xi) \dbar \xi,
  \end{split}
\end{align*}
where $x_{k+1}\cdots x_{k+l} \not=0, \, x , y \in W_\gamma$. $  R^\gamma_{\alpha,\lambda}(x,y)$ is smooth  off the diagonal, and  rapidly falling at infinity as $| x_{j}| \to 0 $, or $| y_{j}| \to 0 $, as long as $x_j\not=y_j$, where  $k+1\leq j\leq  k+l$. 
\end{theorem}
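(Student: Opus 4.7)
\smallskip\noindent\textbf{Proof proposal.} The plan is to combine the convolution representation $(\lambda\1+\overline\Omega)^{-\alpha}=\pi(R_{\alpha,\lambda})$ with the local analysis already carried out for general $\pi(f)$ in the proof of Theorem~\ref{thm:3}, feeding in the symbol information contained in Proposition~\ref{prop:3}. The argument essentially parallels that of Theorem~\ref{thm:heatoperator}, the new difficulty being that $R_{\alpha,\lambda}(g)$ is singular at $g=e$ when $d\geq q\alpha$, so that only a finite-order symbol bound is available.

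First I would observe that $R_{\alpha,\lambda}\in \L^1(G,e^{\kappa|g|}d_G)$ for every $\kappa\geq 0$ small enough, by Theorem~\ref{thm:res.est}, so that $\pi(R_{\alpha,\lambda})$ is a well defined bounded operator and its localization $A^\gamma_{R_{\alpha,\lambda}}$ from \eqref{eq:38a} is given by the absolutely convergent integral \eqref{eq:38b}, with amplitude $a^\gamma_{R_{\alpha,\lambda}}(x,\xi)$ obtained exactly as in the proof of Theorem~\ref{thm:3} by replacing $f$ by $R_{\alpha,\lambda}$. The change of variables $\xi\mapsto(\1_k\otimes T_x)\xi$, valid wherever $x_{k+1}\cdots x_{k+l}\neq 0$, produces the factorization $a^\gamma_{R_{\alpha,\lambda}}(x,\xi)=\tilde a^\gamma_{R_{\alpha,\lambda}}(x,(\1_k\otimes T_x)\xi)$ with the auxiliary symbol $\tilde a^\gamma_{R_{\alpha,\lambda}}$ introduced just before Proposition~\ref{prop:3}.

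At this point Proposition~\ref{prop:3} supplies the two crucial ingredients: the symbol estimate $\tilde a^\gamma_{R_{\alpha,\lambda}}\in\Sym^{-Q}_{la}(W_\gamma\times\R^{k+l})$ and the lacunarity in each of the coordinates $x_{k+1},\dots,x_{k+l}$. Together with the definition of $\L^{-Q}_b$ from Section~\ref{sec:PDO}, these two facts are exactly what is required to conclude that $\pi(R_{\alpha,\lambda})_{|\overline{\widetilde\X_\Delta}}$ is a totally characteristic pseudodifferential operator of class $\L^{-Q}_b$ on the manifold with corners $\overline{\widetilde\X_\Delta}$, the charts $\{(\widetilde W_\gamma\cap\overline{\widetilde\X_\Delta},\phi_\gamma^{-1})\}$ being of the required type $\R^{n,l}$ with the $t$-coordinates serving as boundary defining functions.

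Finally, the two expressions for the kernel $R^\gamma_{\alpha,\lambda}(x,y)$ follow by Fourier inversion and the substitution $\xi\mapsto(\1_k\otimes T_x)\xi$, whose Jacobian produces the factor $|x_{k+1}\cdots x_{k+l}|^{-1}$. Smoothness off the diagonal on $W_\gamma^\ast\times W_\gamma^\ast$ follows because on $W_\gamma^\ast$ the amplitude $a^\gamma_{R_{\alpha,\lambda}}$ belongs to $\Sym^{-\infty}$, by the transitivity of the $G$-action on each open orbit $\widetilde\X_\Delta$, exactly as in the discussion preceding \eqref{eq:33}. The rapid decay of $R^\gamma_{\alpha,\lambda}(x,y)$ as $|x_j|\to 0$ or $|y_j|\to 0$ with $x_j\neq y_j$ for some $k+1\leq j\leq k+l$ is then a direct consequence of the lacunarity of $\tilde a^\gamma_{R_{\alpha,\lambda}}$ together with \cite{melrose}, Lemma~4.1, applied as at the end of the proof of Theorem~\ref{thm:3}. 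I expect the only genuine technical obstacle to lie in Proposition~\ref{prop:3} itself, which has to absorb the singularity of $R_{\alpha,\lambda}$ at the identity by a careful integration-by-parts argument based on Proposition~\ref{prop:A} and the estimates of Theorem~\ref{thm:res.est}; once that proposition is in place, the remaining steps are purely formal and mirror those used for the semigroup kernel in Theorem~\ref{thm:heatoperator}.
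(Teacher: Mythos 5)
Your overall plan—pass to the convolution representation, localize as in the proof of Theorem~\ref{thm:3}, and then invoke Proposition~\ref{prop:3} for the symbol class and lacunarity—is indeed the paper's route, and the formal parts of your argument (the derivation of \eqref{eq:38b}, the change of variables giving the factor $|x_{k+1}\cdots x_{k+l}|^{-1}$, and the identification of the charts on $\overline{\widetilde\X_\Delta}$ as manifolds with corners) are fine. But two of your claims are incorrect, and both come from treating the resolvent symbol as if it were smoothing on the open orbit.

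First, your assertion that on $W_\gamma^\ast$ the amplitude $a^\gamma_{R_{\alpha,\lambda}}$ lies in $\Sym^{-\infty}$ ``by transitivity of the $G$-action, exactly as in the discussion preceding \eqref{eq:33}'' is false. That discussion applies to $f\in\S(G)$, which is smooth at the identity. Here $R_{\alpha,\lambda}$ is singular at $g=e$ whenever $d\geq q\alpha$ (Theorem~\ref{thm:res.est}), so the Schwartz kernel of $\pi(R_{\alpha,\lambda})$ is genuinely singular on the diagonal of $\widetilde\X_\Delta\times\widetilde\X_\Delta$; a $\Sym^{-\infty}$ amplitude would force it to be smooth there. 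What the proof of Proposition~\ref{prop:3} actually gives on $W_\gamma^\ast$ is $a^\gamma_{R_{\alpha,\lambda}}\in\Sym^{-Q}(W_\gamma^\ast\times\R^{k+l})$, and smoothness off the diagonal then holds by the standard fact about pseudodifferential operators of finite order, not because the amplitude is smoothing. Second, because the symbol has only finite order, the decay of the kernel as $|x_j|\to 0$ (with $x_j\neq y_j$) cannot be read off from \eqref{27} and the Schwartz property of $\tilde A^\gamma(x,\cdot)$ as in Theorem~\ref{thm:3}, and neither lacunarity nor \cite{melrose}, Lemma~4.1, provides it (lacunarity handles $|y_j|\to 0$; Lemma~4.1 only extends the kernel from $W_\gamma^\ast\times W_\gamma^\ast$). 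The paper's argument for $|x_j|\to 0$ is integration by parts in $\xi$: off the diagonal one writes
\bqn
\int e^{i(x-y)\cdot\xi}a^\gamma_{R_{\alpha,\lambda}}(x,\xi)\,\dbar\xi
=\frac{1}{|x-y|^{2N}}\int e^{i(x-y)\cdot\xi}\bigl(\gd_{\xi_1}^2+\dots+\gd_{\xi_{k+l}}^2\bigr)^N a^\gamma_{R_{\alpha,\lambda}}(x,\xi)\,\dbar\xi,
\eqn
and uses the estimates \eqref{eq:ressym} together with the fact that each $\gd_{\xi_j}$, $k+1\leq j\leq k+l$, applied to $a^\gamma_{R_{\alpha,\lambda}}(x,\xi)=\tilde a^\gamma_{R_{\alpha,\lambda}}(x,(\1_k\otimes T_x)\xi)$ produces a factor $x_j$ that overcomes the Jacobian factor $|x_j|^{-1}$. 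This finite-order integration-by-parts step is the piece your proposal is missing, and it is precisely what distinguishes the resolvent case from the semigroup case of Theorem~\ref{thm:heatoperator}.
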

\begin{proof}
The assertions of the theorem are direct consequences of  our previous considerations, except for  the behavior  of $R^\gamma_{\alpha,\lambda}(x,y)$ at infinity. Let $k+1\leq j\leq  k+l$. While the behavior as $|y_j| \to 0$ is a direct consequence of the lacunarity of $\tilde a^\gamma_{R_{\alpha,\gamma}}$, the behavior as $|x_j| \to 0$ is a direct consequence of the fact that, as oscillatory integrals,
\bqn 
 \int e ^{i(x-y) \cdot  \xi} a^\gamma _{R_{\alpha,\lambda}} (x,\xi) \dbar \xi= \frac 1 {|x-y|^{2N}}  \int e ^{i(x-y) \cdot  \xi} ( \gd_{\xi_1}^2 + \dots +\gd_{\xi_{k+l}}^2  )^N a^\gamma _{R_{\alpha,\lambda}} (x,\xi) \dbar \xi,
\eqn
where $x\not = y$, and $N$ is arbitrarily large. 
\end{proof}

\begin{remark}
The singular behavior of $R_{\alpha,\lambda}(g)$ at the identity corresponds to  the fact that, as a pseudodifferential operator of class $L^{-Q}_b$, $(\lambda \1 + \overline \Omega)^{-\alpha}$ has a kernel which is  singular at the diagonal.  
\end{remark}

To conclude, let us say some words about   the classical heat kernel on a Riemannian symmetric space of non-compact type. 
Consider thus   the regular representation $(\sigma,\mathrm{C}(\widetilde \X))$  of the solvable Lie group $S=AN^-\simeq \X\simeq G/K$ on the Oshima compactification $\widetilde \X$ of $\X$, 
 and associate to every $f\in \S(S)$ the corresponding convolution operator
\bqn 
\int _{S} f(g)  \sigma(g)  \d_{S}(g).
\eqn
 Its restriction to $\Cinft(\widetilde \X)$ induces again a continuous linear operator            
\begin{equation*}
\sigma(f):\Cinft(\widetilde \X) \longrightarrow \Cinft(\widetilde \X) \subset \D'(\widetilde \X),
\end{equation*}
and an examination of the arguments in Section \ref{Sec:4} shows that an analogous analysis applies to the operators $\sigma(f)$. In particular, Theorem \ref{thm:3} holds for them, too. 
Let  $\rho$ be the half sum of all positiv roots, and 
\bqn 
{C}=\sum_j H_j^2 - \sum_{j} Z_j^2-\sum_j [ X_j \theta(X_j) + \theta(X_j) X_j]\equiv \sum_j H_j^2 -2\rho +2 \sum X_j^2 \mod \U(\g) \k
\eqn
be the Casimir operator in $\U(\g)$, where $\mklm{H_j}$, $\mklm{Z_j}$, and  $\mklm{X_j}$ are orthonormal basis of $\a$, $\m$, and $\n^-$, respectively,  and put $C'=\sum_j H_j^2 -2\rho +2 \sum X_j^2$. Though $-d\pi(C')$ is not a strongly elliptic operator in the sense defined above, $\Omega=-d\sigma(C')$ certainly is. Consequently, if $K'_\tau(g) \in \S(S)$ denotes the corresponding Langlands kernel, Theorems \ref{thm:heatoperator} and \ref{thm:resolvent} yield descriptions of the Schwartz kernels of $\sigma(K'_\tau)$ and $(\lambda \1 + \overline \Omega)^{-\alpha}$  on $\widetilde \X$. On the other hand, denote by $\Delta$  the Laplace-Beltrami operator on $\X$. Then
\bqn 
\Delta \phi(gK)=\phi(g:C)= \phi(g:C'), \qquad \phi \in \Cinft(\X),
\eqn
and the associated heat kernel  $h_\tau(g)$ on $\X$ coincides with the heat kernel on $S$ associated to $C'$. But the latter is essentially given by the Langlands kernel $K'_\tau(g)$, being the solution of the parabolic equation
 \bqn
\frac{\gd K'_\tau}{\gd \tau} (g) - dL(C') K'_\tau(g)=0, \qquad \lim_{\tau \to 0 } K'_\tau(g) = \delta(g)
\eqn 
on $S$. In this particular case, optimal upper and lower bounds for $h_\tau$ and the Bessel-Green-Riesz kernels were 
 given in \cite{anker-ji99} using spherical analysis under certain restrictions coming from the lack of control in the
 Trombi-Varadarajan expansion for spherical functions along the walls. Our asymptotics for the kernels of
  $\sigma(K'_\tau)$ and $(\lambda \1 + \overline \Omega)^{-\alpha}$ on $\widetilde \X_\Delta\simeq \X$ are free 
of restrictions, and in concordance with those of \cite{anker-ji99},  though, of course, less explicit.
A detailed description of the resolvent of $\Delta$ on $\X$ was given in \cite{mazzeo-melrose87}, \cite{mazzeo-vasy05}.


\providecommand{\bysame}{\leavevmode\hbox to3em{\hrulefill}\thinspace}
\providecommand{\MR}{\relax\ifhmode\unskip\space\fi MR }
\providecommand{\MRhref}[2]{%
  \href{http://www.ams.org/mathscinet-getitem?mr=#1}{#2}
}
\providecommand{\href}[2]{#2}


\end{document}